\newtheorem{theorem}{Theorem}[section]
\newtheorem{lemma}[theorem]{Lemma}
\theoremstyle{definition}
\theoremstyle{remark}
\newtheorem{remark}[theorem]{Remark}
\numberwithin{equation}{section}
\begin{document}
\title{Fast algorithms for convolution quadrature of Riemann-Liouville fractional derivative}
\author{Jing Sun}\address{School of Mathematics and Statistics, Gansu Key Laboratory of Applied Mathematics and Complex Systems, Lanzhou University, Lanzhou 730000, P.R. China}
\author{Daxin Nie}\address{School of Mathematics and Statistics, Gansu Key Laboratory of Applied Mathematics and Complex Systems, Lanzhou University, Lanzhou 730000, P.R. China}
\author{Weihua Deng}\address{School of Mathematics and Statistics, Gansu Key Laboratory of Applied Mathematics and Complex Systems, Lanzhou University, Lanzhou 730000, P.R. China}\email{dengwh@lzu.edu.cn}
\date{}
\begin{abstract}
	Recently, the numerical schemes of the Fokker-Planck equations describing anomalous diffusion with two internal states have been proposed in [Nie, Sun and Deng, arXiv: 1811.04723], which use convolution quadrature to approximate the Riemann-Liouville fractional derivative; and the schemes need huge storage and computational cost because of the non-locality of fractional derivative and the large scale of the system. This paper first provides the fast  algorithms for computing the Riemann-Liouville derivative based on convolution quadrature with the generating function given by the backward Euler and second-order backward difference methods; the algorithms  don't require the assumption of the regularity of the solution in time, while the computation time and the total memory requirement are greatly reduced. Then we apply the fast algorithms to solve the homogeneous fractional Fokker-Planck equations with two internal states for nonsmooth data and get the first- and second-order accuracy in time. Lastly, numerical examples are presented to verify the convergence and the effectiveness of the fast algorithms.
 \end{abstract}
\subjclass{26A33, 44A35, 65M06}
\keywords{convolution quadrature, fast algorithms, Riemann-Liouville derivative, fractional Fokker-Planck equations,  error estimates}
\maketitle
\section{Introduction}
Nowadays, it is widely recognized that anomalous diffusions are ubiquitous in the natural world, which also naturally become an interdisciplinary research topic. Microscopically, various stochastic processes are introduced, including L\'{e}vy process, L\'{e}vy walk, L\'{e}vy flights, continuous time random walks with power law waiting times and/or jump lengths, while macroscopically the diverse partial differential equations (PDEs) governing the probability density functions (PDFs) of a variety of statistical observables, e.g., position and functionals, are derived \cite{Chen2018}. A lot of efforts are made for numerically solving these PDEs \cite{Alikhanov2015,Chen2009,Deng2009,Deng2013,Li2009,Li2017_1,Lin2007}, and generally they are nonlocal, which urges people to develop the fast algorithm to deal with the challenges of huge storage and computational complexity \cite{Jiang2017,Yan2017,Zeng2018}.

More recently, the anomalous diffusions with multiple internal states are carefully studied, and the corresponding macroscopic PDEs are built \cite{Xu2018,Xu20182}.
Then, \cite{Nie2018} provides a numerical scheme and does the numerical analyses for the  homogeneous fractional Fokker-Planck equations with two internal states \cite{Xu2018}, i.e.,
\begin{equation}\label{equmatrixequ}
	\left \{
	\begin{split}
		&\mathbf{M}^T\frac{\partial }{\partial t} \mathbf{G}=(\mathbf{M}^T-\mathbf{I}){\rm diag}(~_0D^{1-\alpha_1}_t, ~_0D^{1-\alpha_2}_t)\mathbf{G}\\
		&\quad\quad\quad\quad\quad\quad +\mathbf{M}^T{\rm diag}(~_0D^{1-\alpha_1}_t, ~_0D^{1-\alpha_2}_t)\Delta\mathbf{G}~~~~\quad\quad\quad \quad \ {\rm in}\  \Omega,\ t\in[0,T],\\
		&\mathbf{G}(\cdot,0)=\mathbf{G}_0 \quad\quad\quad\quad\quad\quad\quad\quad\quad\quad\quad\quad\quad\quad\quad\quad\quad\quad\quad\quad {\rm in}\ \Omega,\\
		&\mathbf{G}=0 \quad\quad\quad\quad\quad\quad\quad\quad\quad\quad\quad\quad\quad\quad\quad\quad\quad\quad\quad\quad\quad\quad\ \ \, {\rm on}\ \partial\Omega,\ t\in[0,T],
	\end{split}
	\right .
\end{equation}
where $\Omega$ denotes a bounded convex polygonal domain in $\mathbb{R}^d$ $(d=1,2,3)$; $\mathbf{M}$ is the transition matrix of a Markov chain, being a $2\times 2$ invertible matrix here, namely, we set
\begin{equation}\label{equmarM}
\mathbf{M}=\left[
\begin{matrix}
m& 1-m\\
1-m & m
\end{matrix}\right],\ m\in(0,1/2)\bigcup (1/2,1);
\end{equation}
 $\mathbf{G}=[G_1,G_2]^T$ denotes the solution of the system \eqref{equmatrixequ} and $\mathbf{G}_0=[G_1(0),G_2(0)]^T$ is the initial value; $\mathbf{I}$ is an identity matrix; `diag' denotes a diagonal matrix formed from its vector argument,  and $~_0D^{1-\alpha_{i}}_t$, $i=1,2$ are the Riemann-Liouville fractional derivatives defined by \cite{Podlubny1999}
\begin{equation}\label{equdefRie}
	_{0}D^{1-\alpha_{i}}_tG=\frac{1}{\Gamma(\alpha_{i})}\frac{\partial}{\partial t}\int^t_{0}(t-\xi)^{\alpha_{i}-1}G(\xi)d\xi, ~\alpha_{i}\in(0,1), ~i=1,2.
\end{equation}
The convolution quadrature \cite{Lubich1988,Lubich19882,Lubich1996} is a popular strategy to approximate \eqref{equdefRie}, since it doesn't require the assumption of regularity of the solution in time and it can achieve high order accuracy after the suitable modification \cite{Jin2003,Jin2014,Jin2015,Jin2016,Jin2017}. The backward Euler (BE) convolution quadrature is used to solve the system \eqref{equmatrixequ} with first-order accuracy in \cite{Nie2018}; it's worth pointing out that the time step size $\tau$ must be very small to ensure the stability and convergence of the algorithm when $m$ is less than but close to $0.5$, which results in huge storage cost and computational complexity. So fast algorithm is expected to be developed.


In this paper, besides the first-order approximation given in \cite{Nie2018}, we also discuss the second-order approximation of \eqref{equdefRie} designed by convolution quadrature with generating function given by second-order backward difference (SBD).
We modify the k-th order $(k=1,2)$ approximations of $~_0D^{\alpha}_{t}G(t_n)$ based on convolution quadrature to speed up the calculations, that is, we use the sum of geometric sequences to approximate the weights generated by BE and SBD convolution quadrature.  According to the property of the geometric sequences, the computation can be performed iteratively, which greatly reduce the computational complexity and the storage cost (for the details, refer to Section \ref{sec:3}).
Afterwards, we apply the designed fast algorithms to solve the system \eqref{equmatrixequ} and get the first- and second-order accuracy in time. Compared with the existing fast algorithms for fractional derivatives, the ones provided in this paper have the advantage of weakening the requirement of the regularity of the solution in time.


This paper is organized as follows. In Section \ref{sec:2}, we  introduce some needed notations and  lemmas. In Section \ref{sec:3}, we develop the fast algorithms based on convolution quadrature for Riemann-Liouville fractional derivatives, i.e., fast BE and SBD discretizations. In Section \ref{sec:4}, we use the fast algorithms to solve the homogeneous fractional Fokker-Planck equations \eqref{equmatrixequ} with the first- and second-order accuracy, respectively, in time. Section \ref{sec:5} shows the effectiveness of the fast algorithms by numerical experiments.

\section{Preliminaries} \label{sec:2}
Let's begin with some needed notations. Throughout the paper, $C$ denotes a generic positive constant, whose value may differ at each occurrence.
We denote $G_1(t)$, $G_2(t)$ as the functions $G_1(\cdot,t)$, $G_2(\cdot,t)$ respectively, introduce $\|\cdot\|$ as the operator norm from $L^2(\Omega)$ to $L^2(\Omega)$, and denote $A=-\Delta$ in the following. For any $ r\geq  0 $, denote the space $ \dot{H}^r(\Omega)=\{\vartheta\in L^2(\Omega): A^{\frac{r}{2}}\vartheta\in L^2(\Omega) \}$ with the norm \cite{Bazhlekova2015}
\begin{equation*}
	\|\vartheta\|^2_{\dot{H}^r(\Omega)}=\sum_{j=1}^{\infty}\lambda_j^r(\vartheta,\varphi_j)^2,
\end{equation*}
where $ {(\lambda_j,\varphi_j)} $ are the eigenvalues ordered non-decreasingly and the corresponding eigenfunctions normalized in the $ L^2(\Omega) $ norm of $-\Delta$ on the domain $\Omega$ with a zero Dirichlet boundary condition. Thus $ \dot{H}^0(\Omega)=L^2(\Omega) $, $\dot{H}^1(\Omega)=H^1_0(\Omega)$, and $\dot{H}^2(\Omega)=H^2(\Omega)\bigcap H^1_0(\Omega)$.

After that, for $\kappa>0$ and $\pi/2<\theta<\pi$, we define sectors $\Sigma_{\theta}$ and $\Sigma_{\theta,\kappa}$ in the complex plane $\mathbb{C}$ as
\begin{equation*}
	\begin{aligned}
		&\Sigma_{\theta}=\{z\in\mathbb{C}\setminus \{0\},|\arg z|\leq \theta\}, \\
		&\Sigma_{\theta,\kappa}=\{z\in\mathbb{C}:|z|>\kappa,|\arg z|\leq \theta\},\\
	\end{aligned}
\end{equation*}
and the contour $\Gamma_{\theta,\kappa}$ is defined by
\begin{equation*}
	\Gamma_{\theta,\kappa}=\{z\in\mathbb{C}: |z|=\kappa,|\arg z|\leq \theta\}\bigcup\{z\in\mathbb{C}: z=r e^{\pm \mathbf{i}\theta}: r\geq \kappa\},
\end{equation*}
oriented with an increasing imaginary part, where $\mathbf{i}$ denotes the imaginary unit and $\mathbf{i}^2=-1$.
According to the results in \cite{Nie2018}, the system \eqref{equmatrixequ} can be rewritten as
\begin{equation}\label{equrqtosol}
	\left \{
	\begin{aligned}
		&\frac{\partial G_1}{\partial t}+a~_0D^{1-\alpha_1}_tG_1+~_0D^{1-\alpha_1}_tA G_1=a~_0D^{1-\alpha_2}_tG_2\quad\quad\quad\,\, {\rm in}\ \Omega,\ t\in[0,T],\\
		&\frac{\partial G_2}{\partial t}+a~_0D^{1-\alpha_2}_tG_2+~_0D^{1-\alpha_2}_tA G_2=a~_0D^{1-\alpha_1}_tG_1 \quad\quad\,\,\quad {\rm in}\ \Omega,\ t\in[0,T],\\
		&\mathbf{G}(\cdot,0)=\mathbf{G}_0\quad \quad\quad\quad\quad\quad\quad\quad\quad\quad\quad\,\quad\quad\quad\quad\quad\quad\quad\quad\quad {\rm in}\ \Omega,\\
		&\mathbf{G}=0 \quad\quad\quad\quad\quad\quad\quad\quad\quad\quad\quad\quad\quad\quad\quad\quad\quad\quad\quad\quad\quad\quad\quad {\rm on}\ \partial\Omega,\ t\in[0,T],
	\end{aligned}
	\right .
\end{equation}
where  $ a=\frac{1-m}{2m-1} $ and $m$ is defined in \eqref{equmarM};
and the system \eqref{equrqtosol} has the solution of the form
\begin{equation*}
	\begin{aligned}
		\tilde{G}_1=&H_{\alpha_1}(z)z^{\alpha_1-1}G_1(0)+aH(z)z^{\alpha_1-1}G_2(0),\\
		\tilde{G}_2=&aH(z)z^{\alpha_2-1}G_1(0)+H_{\alpha_2}(z)z^{\alpha_2-1}G_2(0),
	\end{aligned}
\end{equation*}
 where `$\tilde{~}$' stands for taking Laplace transform,
\begin{equation}\label{equdefHz}
	H(z)=\left((z^{\alpha_1}+a+A)(z^{\alpha_2}+a+A)-a^2\right)^{-1},
\end{equation}
and
\begin{equation}\label{H12}
	\begin{aligned}
		H_{\alpha_1}(z)=H(z)(z^{\alpha_2}+a+A),\quad
		H_{\alpha_2}(z)=H(z)(z^{\alpha_1}+a+A).
	\end{aligned}
\end{equation}

Then we provide some estimates related to \eqref{equdefHz} and \eqref{H12}, which will be used in the error estimates.
\begin{lemma}[\cite{Nie2018}]\label{lemtheestimateofHblabla}
	When $z\in\Sigma_{\theta,\kappa}$, $ \pi/2<\theta<\pi $, and $\kappa>\max\left (2|a|^{1/\alpha_1},2|a|^{1/\alpha_2}\right )$, we have the estimates
	\begin{equation*}
		\begin{aligned}
			&\left \|\left(z^{\alpha_1}+a+A\right)^{-1}\right \|\leq C|z|^{-\alpha_1}, ~\left \|\left(z^{\alpha_2}+a+A\right)^{-1}\right \|\leq C|z|^{-\alpha_2},\\
			&\|H(z)\|\leq C|z|^{-\alpha_1-\alpha_2},\quad\|H_{\alpha_1}(z)\|\leq C|z|^{-\alpha_1},\quad\|H_{\alpha_2}(z)\|\leq C|z|^{-\alpha_2},\\
			&\|AH(z)\|\leq C\min\left (|z|^{-\alpha_1},|z|^{-\alpha_2}\right ),\quad\|AH_{\alpha_1}(z)\|\leq C,\quad\|AH_{\alpha_2}(z)\|\leq C,
		\end{aligned}
	\end{equation*}
where $H(z)$, $H_{\alpha_1}(z)$ and $H_{\alpha_2}(z)$ are defined in \eqref{equdefHz} and \eqref{H12}, respectively.
\end{lemma}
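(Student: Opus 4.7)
The plan is to reduce every operator-norm estimate to a scalar estimate in the spectral variable $\lambda_j > 0$ of $A = -\Delta$. Because $A$ is self-adjoint and positive definite with an orthonormal eigenbasis, for any rational function $F(A, z)$ of $A$ with spectrum-respecting composition we have $\|F(A,z)\| = \sup_{j\ge 1} |F(\lambda_j, z)|$. Therefore each of the six bounds reduces to the uniform $\lambda>0$ control of an explicit scalar expression, namely $(z^{\alpha_i}+a+\lambda)^{-1}$, $((z^{\alpha_1}+a+\lambda)(z^{\alpha_2}+a+\lambda)-a^2)^{-1}$ (possibly multiplied by $\lambda$ or by $z^{\alpha_j}+a+\lambda$).

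The first step, and the workhorse, is a sector inequality: for $z\in\Sigma_{\theta,\kappa}$ and $\lambda>0$,
\begin{equation*}
|z^{\alpha_i}+a+\lambda|\;\ge\; c_\theta\,(|z|^{\alpha_i}+\lambda),\qquad i=1,2.
\end{equation*}
To prove this I would argue in two moves. First, $z\in\Sigma_\theta$ implies $z^{\alpha_i}\in\Sigma_{\alpha_i\theta}$ with $\alpha_i\theta<\pi$, so $|z^{\alpha_i}|$ and $\mathrm{Re}(e^{-\mathbf{i}\varphi}z^{\alpha_i})$ are comparable for a fixed rotation $\varphi$ away from the negative real axis. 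Second, the hypothesis $\kappa>2|a|^{1/\alpha_i}$ gives $|z^{\alpha_i}|\ge 2|a|$, so adding the real number $a$ perturbs the argument by at most $\arcsin(1/2)=\pi/6$; hence $z^{\alpha_i}+a$ still lies in a closed subsector $\Sigma_{\theta'}$ with $\theta'<\pi$ and satisfies $|z^{\alpha_i}+a|\ge\tfrac12|z^{\alpha_i}|$. Adding $\lambda>0$ to an element of such a sector keeps us in the same sector, and the standard triangle-geometry inequality $|w+\lambda|\ge \sin(\pi-\theta')(|w|+\lambda)/2$ yields the claimed bound. The first line of the lemma follows immediately.

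The estimates for $H(z)$, $H_{\alpha_1}(z)$, $H_{\alpha_2}(z)$ require one algebraic rewriting before the sector inequality can be applied directly. Expanding,
\begin{equation*}
(z^{\alpha_1}+a+\lambda)(z^{\alpha_2}+a+\lambda)-a^2
\;=\;(z^{\alpha_1}+\lambda)(z^{\alpha_2}+\lambda)+a\bigl(z^{\alpha_1}+z^{\alpha_2}+2\lambda\bigr).
\end{equation*}
I would bound the first product below by $c(|z|^{\alpha_1}+\lambda)(|z|^{\alpha_2}+\lambda)$ using the sector inequality (with $a=0$), and then absorb the $a$-perturbation: the ratio of the correction to the main term is controlled by $|a|(|z|^{\alpha_1}+|z|^{\alpha_2}+2\lambda)$ divided by $(|z|^{\alpha_1}+\lambda)(|z|^{\alpha_2}+\lambda)$, and since $|z|^{\alpha_i}\ge 2|a|$ one checks termwise that this ratio is $\le 1/2$, so the full quantity is bounded below by $\tfrac{c}{2}(|z|^{\alpha_1}+\lambda)(|z|^{\alpha_2}+\lambda)$. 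From this lower bound all remaining estimates are one-line consequences: for $\|H\|$, divide $1$; for $\|H_{\alpha_1}\|$, divide the upper bound $|z^{\alpha_2}+a+\lambda|\le C(|z|^{\alpha_2}+\lambda)$ (and the second factor cancels); for $\|AH\|$, the factor $\lambda$ is majorised by either $|z|^{\alpha_1}+\lambda$ or $|z|^{\alpha_2}+\lambda$, giving the minimum; for $\|AH_{\alpha_i}\|$, $\lambda(|z|^{\alpha_{3-i}}+\lambda)$ is absorbed by the denominator, yielding the uniform bound.

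The main technical point will be the sector inequality of the first step; once the angle of $z^{\alpha_i}+a$ is shown to stay strictly inside $(-\pi,\pi)$ under the hypothesis on $\kappa$, every other bound is algebra. A minor care is also needed in justifying that the spectral sup-norm identity extends to the operator-valued rational function $H(z)$: this follows since $(z^{\alpha_1}+a+\lambda)(z^{\alpha_2}+a+\lambda)-a^2$ is non-vanishing for $z\in\Sigma_{\theta,\kappa}$ and $\lambda>0$ (our lower bound is strictly positive), so the resolvent is defined by the functional calculus of $A$ and its norm equals the scalar supremum.
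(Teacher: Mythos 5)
The paper does not actually prove this lemma---it is imported verbatim from \cite{Nie2018}---so the only in-paper point of comparison is the proof of the companion Lemma \ref{lemaAH}, which works at the operator level: it uses the resolvent bounds of the first line together with a Neumann-series-type absorption of the $a^2(z^{\alpha_1}+a+A)^{-1}(z^{\alpha_2}+a+A)^{-1}$ term, invoking ``taking $\kappa$ sufficiently large.'' Your route is genuinely different: you reduce everything to scalar estimates in the spectral variable $\lambda_j$ of the self-adjoint operator $A$ and prove a single sector inequality $|z^{\alpha_i}+a+\lambda|\ge c_\theta(|z|^{\alpha_i}+\lambda)$, from which all six bounds follow by algebra. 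This reduction is legitimate here (all operators in play are rational functions of $A$ with denominators bounded away from zero on the spectrum), it is more elementary, and it yields $\|H\|,\|AH\|,\|AH_{\alpha_i}\|$ in one stroke rather than through operator identities; what the operator-resolvent route buys instead is independence from self-adjointness (it would survive for a general sectorial $A$).

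Three quantitative points need repair, all of the same ``constants'' nature that the paper itself glosses over. First, $\kappa>2|a|^{1/\alpha_i}$ gives $|z|^{\alpha_i}>2^{\alpha_i}|a|$, not $2|a|$; this still yields $|z^{\alpha_i}+a|\ge(1-2^{-\alpha_i})|z|^{\alpha_i}$, so nothing breaks, but your stated margin is wrong. Second, and more seriously, your termwise absorption does not close as written: the perturbation $|a|\bigl(|z|^{\alpha_1}+|z|^{\alpha_2}+2\lambda\bigr)$ is bounded by $(2^{-\alpha_1}+2^{-\alpha_2})(|z|^{\alpha_1}+\lambda)(|z|^{\alpha_2}+\lambda)$, and $2^{-\alpha_1}+2^{-\alpha_2}$ can be close to $2$, while your lower bound on the unperturbed product carries the sector constant $c_\theta^2<1$; so the ratio is not $\le 1/2$ under the stated threshold on $\kappa$, and you must instead take $\kappa$ sufficiently large depending on $\theta,\alpha_1,\alpha_2$ (exactly the license the paper takes in Lemma \ref{lemaAH}). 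Third, the claim that adding $a$ perturbs $\arg(z^{\alpha_i})$ by at most $\pi/6$ and hence keeps the point in a proper subsector can fail literally when $\alpha_i\theta$ is close to $\pi$ (the sum $\alpha_i\theta+\arcsin(2^{-\alpha_i})$ may exceed $\pi$); the correct justification is that $\operatorname{Im}(z^{\alpha_i}+a)=\operatorname{Im}(z^{\alpha_i})$ vanishes only for $z>0$ real, where $z^{\alpha_i}+a>0$, so $z^{\alpha_i}+a$ never meets $(-\infty,0]$, and a direct estimate of $\operatorname{dist}(z^{\alpha_i}+a,(-\infty,0])\ge c|z|^{\alpha_i}$ then gives the first line of the lemma without any subsector detour.
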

\begin{lemma}\label{lemaAH}
	When $z\in\Sigma_{\theta,\kappa}$, $ \pi/2<\theta<\pi $, and $\kappa>\max\left (2|a|^{1/\alpha_1},2|a|^{1/\alpha_2}\right )$, there are the estimates of $H(z)$, $H_{\alpha_1}(z)$ and $H_{\alpha_2}(z)$  in \eqref{equdefHz} and \eqref{H12},
	\begin{equation*}
		\begin{aligned}
			&\|(a+A)H(z)\|\leq C\min\left (|z|^{-\alpha_1},|z|^{-\alpha_2}\right ), ~\|(a+A)H_{\alpha_1}(z)\|\leq C, ~\|(a+A)H_{\alpha_2}(z)\|\leq C.\\
		\end{aligned}
	\end{equation*}
\end{lemma}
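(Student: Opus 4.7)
The plan is to rewrite $(a+A)$ via the trivial identity $(a+A)=(z^{\alpha_i}+a+A)-z^{\alpha_i}$ for a well-chosen $i\in\{1,2\}$, and then exploit the algebraic relations built into the definitions \eqref{equdefHz}--\eqref{H12} to collapse the first summand into a known quantity. The crucial structural fact is that $H(z)$, $H_{\alpha_1}(z)$, $H_{\alpha_2}(z)$ and each factor $z^{\alpha_j}+a+A$ are all (scalar functions of) $A$, hence commute pairwise. Once that is in place, each of the three bounds reduces to the triangle inequality applied to Lemma~\ref{lemtheestimateofHblabla}.

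For $(a+A)H(z)$, I would take $i=2$ and use commutativity to rewrite
\begin{equation*}
(a+A)H(z)=(z^{\alpha_2}+a+A)H(z)-z^{\alpha_2}H(z)=H_{\alpha_1}(z)-z^{\alpha_2}H(z),
\end{equation*}
since $H_{\alpha_1}(z)=H(z)(z^{\alpha_2}+a+A)=(z^{\alpha_2}+a+A)H(z)$. Invoking $\|H_{\alpha_1}(z)\|\leq C|z|^{-\alpha_1}$ and $\|H(z)\|\leq C|z|^{-\alpha_1-\alpha_2}$ from Lemma~\ref{lemtheestimateofHblabla} gives $\|(a+A)H(z)\|\leq C|z|^{-\alpha_1}$. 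The symmetric choice $i=1$ produces the companion bound $C|z|^{-\alpha_2}$, and the two together yield the desired $C\min(|z|^{-\alpha_1},|z|^{-\alpha_2})$.

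For the two remaining bounds I would apply the same subtraction trick, but this time match the shift to the index of $H_{\alpha_i}$ so that the resulting product completes the inverse of $H(z)$. Concretely, from \eqref{equdefHz} we get the resolvent-type identity $(z^{\alpha_1}+a+A)(z^{\alpha_2}+a+A)H(z)=I+a^2H(z)$, hence $(z^{\alpha_1}+a+A)H_{\alpha_1}(z)=I+a^2H(z)$, and therefore
\begin{equation*}
(a+A)H_{\alpha_1}(z)=I+a^2H(z)-z^{\alpha_1}H_{\alpha_1}(z).
\end{equation*}
The three terms are bounded by $1$, $Ca^2|z|^{-\alpha_1-\alpha_2}$, and $|z|^{\alpha_1}\cdot C|z|^{-\alpha_1}=C$, respectively, which gives $\|(a+A)H_{\alpha_1}(z)\|\leq C$. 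Swapping the roles of $\alpha_1$ and $\alpha_2$ settles $(a+A)H_{\alpha_2}(z)$ by an identical calculation.

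I do not foresee a substantive obstacle; the argument is purely algebraic once the correct shift is chosen. The only point worth care is ensuring the power of $|z|$ introduced by the subtracted $z^{\alpha_i}$ exactly cancels the decay already established in Lemma~\ref{lemtheestimateofHblabla}, which is why in the first estimate one subtracts the \emph{opposite} index $z^{\alpha_2}$ (resp.\ $z^{\alpha_1}$) from $(a+A)H(z)$, while in the second and third one must subtract the \emph{matching} index $z^{\alpha_i}$ from $(a+A)H_{\alpha_i}(z)$.
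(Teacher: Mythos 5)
Your proof is correct, and it rests on the same basic decomposition $(a+A)=(z^{\alpha_i}+a+A)-z^{\alpha_i}$ that the paper uses; the difference is in how the first summand is controlled. For $(a+A)H(z)$ the paper bounds $(z^{\alpha_1}+a+A)H(z)$ by setting up the auxiliary equation $(z^{\alpha_2}+a+A)u-a^2(z^{\alpha_1}+a+A)^{-1}u=v$ and then absorbing the term $Ca^2|z|^{-\alpha_1-\alpha_2}\|u\|_{L^2(\Omega)}$ for $\kappa$ sufficiently large, whereas you simply observe---using that $H(z)$, $H_{\alpha_i}(z)$ and $z^{\alpha_j}+a+A$ are all functions of $A$ and hence commute---that $(z^{\alpha_2}+a+A)H(z)=H_{\alpha_1}(z)$, whose bound $C|z|^{-\alpha_1}$ is already recorded in Lemma~\ref{lemtheestimateofHblabla}. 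This shortcut is legitimate and eliminates the paper's absorption step entirely. For the two remaining estimates the paper only states that they are ``similarly obtained,'' while you supply the explicit identity $(z^{\alpha_1}+a+A)H_{\alpha_1}(z)=I+a^2H(z)$, which follows directly from the definition \eqref{equdefHz} together with commutativity; each of the three resulting terms $I$, $a^2H(z)$ and $z^{\alpha_1}H_{\alpha_1}(z)$ is indeed bounded by a constant on $\Sigma_{\theta,\kappa}$ because $|z|\geq\kappa$. In short, your argument is a correct and somewhat more economical variant of the paper's proof.
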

\begin{proof}
	First, consider the estimate of $\|(a+A)H(z)\|$. Obviously, there exist the equalities
	\begin{equation*}
		\begin{aligned}
			(a+A)H(z)&=(z^{\alpha_1}+a+A)H(z)-z^{\alpha_1}H(z)\\
			&=(z^{\alpha_2}+a+A)H(z)-z^{\alpha_2}H(z).
		\end{aligned}
	\end{equation*}
	To estimate $(a+A)H(z)$, one can estimate $(z^{\alpha_1}+a+A)H(z)$, $z^{\alpha_1}H(z)$, $(z^{\alpha_2}+a+A)H(z)$, and $z^{\alpha_2}H(z)$. As for $(z^{\alpha_1}+a+A)H(z)$, let
	\begin{equation*}
		(z^{\alpha_2}+a+A)u-a^2(z^{\alpha_1}+a+A)^{-1}u=v,
	\end{equation*}
	which results in
	\begin{equation*}
		u={\left(z^{\alpha_2}+a+A\right)^{-1}}v+a^2{\left((z^{\alpha_1}+a+A)(z^{\alpha_2}+a+A)\right)^{-1}}u.
	\end{equation*}
	Performing $L_2$ norm on both sides of the above equality and using Lemma \ref{lemtheestimateofHblabla}, we have
	\begin{equation*}
		\|u\|_{L^2(\Omega)}\leq C|z|^{-\alpha_2}\|v\|_{L^2(\Omega)}+Ca^2|z|^{-\alpha_1-\alpha_2}\|u\|_{L^2(\Omega)}.
	\end{equation*}
	Taking $\kappa$ sufficiently large leads to
	\begin{equation*}
		\|(z^{\alpha_1}+a+A)H(z)\|\leq C|z|^{-\alpha_2}.
	\end{equation*}
	Similarly, we also have
	\begin{equation*}
		\begin{aligned}
			&\|(z^{\alpha_2}+a+A)H(z)\|\leq C|z|^{-\alpha_1}.	
		\end{aligned}
	\end{equation*}
	From Lemma \ref{lemtheestimateofHblabla}, there exist
	\begin{equation*}
		\begin{aligned}
			&\|z^{\alpha_1}H(z)\|\leq C|z|^{-\alpha_2}, ~~~~~\|z^{\alpha_2}H(z)\|\leq C|z|^{-\alpha_1}.
		\end{aligned}
	\end{equation*}
	Thus, we obtain the estimate of $\|(a+A)H(z)\|$. The estimates of $\|(a+A)H_{\alpha_1}(z)\|$ and $\|(a+A)H_{\alpha_2}(z)\|$ can be similarly obtained.
\end{proof}

\section{Fast evaluation of the Riemann-Liouville fractional derivative} \label{sec:3}
In this section, we provide the fast BE and fast SBD approximations based on the convolution quadrature of the Riemann-Liouville fractional derivative. Suppose that $N$ is the total number of time steps, and the time step size $\tau=T/N$ and $t_n=n\tau$, $1\leq n\leq N$.

Let's start from the integral representation of the power function.
\begin{lemma}[\cite{Jiang2017}]\label{lemtb}
	For any $\beta>0$, there is
	\begin{equation*}
		\frac{1}{t^\beta}=\frac{1}{\Gamma(\beta)}\int_0^{\infty} e^{-ts}s^{\beta-1}ds.
	\end{equation*}
\end{lemma}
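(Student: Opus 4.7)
The plan is to reduce the identity to the classical definition of the Gamma function by a simple change of variables. Starting from $\Gamma(\beta) = \int_0^\infty u^{\beta-1} e^{-u}\, du$, I would fix $t > 0$ and substitute $u = ts$, so that $du = t\, ds$ and the limits of integration are preserved. This yields
\begin{equation*}
\Gamma(\beta) = \int_0^\infty (ts)^{\beta-1} e^{-ts}\, t\, ds = t^{\beta}\int_0^\infty s^{\beta-1} e^{-ts}\, ds,
\end{equation*}
and dividing both sides by $\Gamma(\beta)\, t^{\beta}$ gives the stated formula.

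The only point deserving a line of justification is convergence of the integral on the right-hand side. For $\beta > 0$ and $t > 0$, the integrand $s^{\beta-1} e^{-ts}$ behaves like $s^{\beta-1}$ near $s = 0$, which is integrable because $\beta - 1 > -1$, and the factor $e^{-ts}$ provides exponential decay as $s \to \infty$. Hence the integral converges absolutely and the change of variables is legitimate.

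I therefore expect no real obstacle: the statement is essentially a rescaling of the defining integral of the Gamma function, and the whole verification is a two-line computation. The reason for isolating it as a lemma is presumably utilitarian, namely to have a clean integral representation of $t^{-\beta}$ at hand when the convolution weights arising from the backward Euler and SBD discretizations of $_0D_t^{1-\alpha_i}$ are later expressed as contour integrals and then approximated by sums of exponentials in Section~\ref{sec:3}.
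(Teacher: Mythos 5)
Your proof is correct and is the standard substitution $u=ts$ in the defining integral of $\Gamma(\beta)$; the paper itself cites this identity from \cite{Jiang2017} without proof, and there is no other natural route. Your remark on convergence of the integral for $\beta>0$, $t>0$ is the only point needing care, and you handle it properly.
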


By using the property of convolution and Lemma $\ref{lemtb}$, Eq. \eqref{equdefRie} can be rewritten as
\begin{equation}\label{equcapdecompose}
	\begin{aligned}
		\!_0D^{\alpha}_t G(t)=&\frac{1}{\Gamma(1-\alpha)}\frac{\partial}{\partial t}\int_0^t\frac{G(\xi)}{(t-\xi)^{\alpha}}d\xi
		=\frac{1}{\Gamma(1-\alpha)}\frac{\partial}{\partial t}\int_0^t\frac{G(t-\xi)}{\xi^{\alpha}}d\xi\\
		=&\frac{1}{\Gamma(1-\alpha)\Gamma(\alpha)}\frac{\partial}{\partial t}\int_0^tG(t-\xi)\int_0^{\infty}e^{-\xi s}s^{\alpha-1}dsd\xi
		=\frac{1}{\Gamma(1-\alpha)\Gamma(\alpha)}\int_0^{\infty}s^{\alpha-1}\frac{\partial}{\partial t}\int_0^tG(t-\xi)e^{-\xi s}d\xi ds.
	\end{aligned}
\end{equation}
Taking the Laplace transform on the both sides of \eqref{equcapdecompose}, we obtain
\begin{equation}\label{zzzz}
	z^{\alpha}=\frac{1}{\Gamma(1-\alpha)\Gamma(\alpha)}\int_0^{\infty}s^{\alpha-1} \frac{z}{s+z}ds.
\end{equation}
Following the classical convolution quadrature, we only need to take $z=\delta(\zeta)$ on the left side of \eqref{zzzz} to get the discretization of $~_0D^\alpha_tG(t)$, where $\delta(\zeta)$ is the generating function given by BE or SBD methods, i.e., $\delta(\zeta)=(1-\zeta)/\tau$ or $\delta(\zeta)=((1-\zeta)+(1-\zeta)^2/2)/\tau$. Here we get the integral representation of the weights generated by convolution quadrature according to \eqref{zzzz} to speed up the evaluation.

\subsection{Fast BE discretization}
Firstly we  take  $z=\delta(\zeta)=(1-\zeta)/\tau$  in \eqref{zzzz} and get
\begin{equation}\label{equO1weightint}
	\left (\frac{1-\zeta}{\tau}\right )^{\alpha}=\frac{1}{\Gamma(1-\alpha)\Gamma(\alpha)}\int_0^{\infty}s^{\alpha-1} \left (1-\frac{\tau s}{\tau s+1-\zeta}\right )ds.
\end{equation}
Setting
\begin{equation}\label{equdefofdj1}
	\left (\frac{1-\zeta}{\tau}\right )^{\alpha}=\sum_{i=0}^{\infty}d^{\alpha}_{1,i}\zeta^i
\end{equation}
and using the fact
\begin{equation*}
	\frac{\tau s}{\tau s+1-\zeta}=\frac{\tau s}{1+\tau s}\sum_{i=0}^{\infty}\left(\frac{\zeta}{1+\tau s}\right)^i,
\end{equation*}
we obtain from \eqref{equO1weightint}
\begin{equation*}
		d^{\alpha}_{1,0}=\frac{1}{\Gamma(1-\alpha)\Gamma(\alpha)}\int_0^{\infty}s^{\alpha-1} \left (1-\frac{\tau s}{\tau s+1}\right )ds,
\end{equation*}
\begin{equation}\label{equO1intrep}
		d^{\alpha}_{1,i}=-\frac{1}{\Gamma(1-\alpha)\Gamma(\alpha)}\int_0^{\infty}s^{\alpha-1} \frac{\tau s}{(\tau s+1)^{i+1}}ds,\qquad i\geq 1.
\end{equation}
For $i\geq 2$, simple calculation (see Appendix \ref{AP1}) leads to
\begin{equation}\label{equO1toderinAp}
	\begin{aligned}
		d^{\alpha}_{1,i}=-\frac{1}{4\tau^{\alpha}\Gamma(1-\alpha)\Gamma(\alpha)}\int_{-1}^{1}(1-s)^{\alpha}(1+s)^{1-\alpha}\bar{d}_{1,i}(s)ds,
	\end{aligned}
\end{equation}
where
\begin{equation}\label{equdefofbardj1}
	\bar{d}_{1,i}(s)=\left(\frac{s+1}{2}\right)^{i-2}.
\end{equation}
Then the Riemann-Liouville fractional derivative can be discretized as
\begin{equation}\label{equrelationO1}
	\begin{aligned}
		\!_0D^{\alpha}_t G(t_n)&\approx\sum_{i=0}^{n}d^{\alpha}_{1,i}G(t_{n-i})\quad {\rm (classical~BE~discretization)}\\
		&=d^{\alpha}_{1,0} G(t_n)+d^{\alpha}_{1,1}G(t_{n-1})+\sum_{i=0}^{n-2}\sum_{j=1}^{N_p}w^\alpha_j\bar{d}_{1,n-i}(s^{\alpha}_j)G(t_i)+\sum_{i=0}^{n-2}\epsilon^{\alpha}_{1,n-i}G(t_i),
	\end{aligned}
\end{equation}
where
\begin{equation*}
	-\frac{1}{4\tau^{\alpha}\Gamma(1-\alpha)\Gamma(\alpha)}\int_{-1}^{1}(1-s)^{\alpha}(1+s)^{1-\alpha}\bar{d}_{1,i}(s)ds=\sum_{j=1}^{N_p}w^{\alpha}_{j}\bar{d}_{1,i}(s^{\alpha}_j)+\epsilon^{\alpha}_{1,i}, \quad i=2,3,\cdots,
\end{equation*}
$\{w^{\alpha}_j\}_{j=1}^{N_p}$ denote the integration weights, $\{s^{\alpha}_j\}_{j=1}^{N_p}$ signify the integration points, $N_p$ is the number of integration points, and $\{\epsilon^{\alpha}_{1,i}\}_{i=2}^{\infty}$ indicate the errors caused by integral approximation. Obviously, $\epsilon^{\alpha}_{1,0}=\epsilon^{\alpha}_{1,1}=0$. To make $|\epsilon^{\alpha}_{1,i}|$ small enough, we can use the Gauss-Jacobi rule to generate $w^\alpha_j$ and $s^{\alpha}_j$, and for $2<i\leq 2N_p+1$, $d^\alpha_{1,i}$ can be exactly approximated, i.e., $\epsilon^\alpha_{1,i}=0$.

To get a fast evaluation for \eqref{equrelationO1}, we rewrite it as
\begin{equation}\label{equO1fastRL}
	\begin{aligned}
		\!_0D^{\alpha}_t G(t_n)&\approx d^{\alpha}_{1,0} G(t_n)+d^{\alpha}_{1,1}G(t_{n-1})+\sum_{i=0}^{n-2}\sum_{j=1}^{N_p}w^{\alpha}_j\bar{d}_{1,n-i}(s^{\alpha}_j)G(t_i)\\
		&= d^{\alpha}_{1,0} G(t_n)+d^{\alpha}_{1,1}G(t_{n-1})+\sum_{j=1}^{N_p}\mathcal{G}^1_{hist,j}(t_n), \qquad n=2,3,\cdots,
	\end{aligned}
\end{equation}
where
\begin{equation}\label{equdefO1historyterm}
	\mathcal{G}^1_{hist,j}(t_n)=w^\alpha_j\sum_{i=0}^{n-2}\bar{d}_{1,n-i}(s^{\alpha}_j)G(t_i),
\end{equation}
and we call it history part. According to Eq. \eqref{equdefofbardj1}, it's easy to know that $\{\bar{d}_{1,i}(s)\}_{i=2}^{\infty}$ is a geometric sequence, so we can get
\begin{equation*}
	\mathcal{G}^1_{hist,j}(t_n)=\frac{s^\alpha_j+1}{2}\mathcal{G}^1_{hist,j}(t_{n-1})+w^\alpha_jG(t_{n-2}).
\end{equation*}
Thus we can get $\mathcal{G}^1_{hist,j}(t_n)$ from $\mathcal{G}^1_{hist,j}(t_{n-1})$ and $G(t_{n-2})$ instead of calculating the sum of $\bar{d}_{1,n-i}(s^{\alpha}_j)G(t_i)$. 
So the computation time is reduced from $\mathcal{O}(N^2)$ to $\mathcal{O}(NN_p)$ and the total memory requirement is cut down from  $\mathcal{O}(N)$ to $\mathcal{O}(N_p)$, where $N$ stands for the total number of time steps and $N_p$ is the number of integration points.

\subsection{Fast SBD discretization}
In this subsection, we take $z=\delta(\zeta)=((1-\zeta)+(1-\zeta)^2/2)/\tau$ in Eq. \eqref{zzzz}, which leads to
\begin{equation}\label{equO2weightint}
	\left (\frac{(1-\zeta)+(1-\zeta)^2/2}{\tau}\right )^{\alpha}=\frac{1}{\Gamma(1-\alpha)\Gamma(\alpha)}\int_0^{\infty}s^{\alpha-1} \left (1-\frac{2\tau s}{2\tau s+(1-\zeta)(3-\zeta)}\right )ds.
\end{equation}
Set
\begin{equation}\label{equdefofdj2}
	\left (\frac{(1-\zeta)+(1-\zeta)^2/2}{\tau}\right )^{\alpha}=\sum_{i=0}^{\infty}d^{\alpha}_{2,i}\zeta^i.
\end{equation}
By simple calculation, we have
\begin{equation*}
	\begin{aligned}
		&-\frac{2\tau s}{2\tau s+(1-\zeta)(3-\zeta)}
		=\frac{2\tau s}{2+2\sigma}\left (\frac{1}{\sigma-1+\zeta}+\frac{1}{3+\sigma-\zeta}\right )\\
		&\qquad=\frac{\tau s}{1+\sigma}\left (\frac{1}{3+\sigma}\sum_{i=0}^{\infty}\left(\frac{\zeta}{3+\sigma}\right)^{i}-\frac{1}{1-\sigma}\sum_{i=0}^{\infty}\left(\frac{\zeta}{1-\sigma}\right)^{i}\right ),
	\end{aligned}
\end{equation*}
where $\sigma$ is the solution of  $\sigma^2+2\sigma+2\tau s=0$. Then from \eqref{equO2weightint} we obtain
\begin{equation*}
		d^{\alpha}_{2,0}=\frac{1}{\Gamma(1-\alpha)\Gamma(\alpha)}\int_0^{\infty}s^{\alpha-1} \left (1+\frac{\tau s}{1+\sigma}\left(\frac{1}{3+\sigma}-\frac{1}{1-\sigma}\right)\right )ds,
\end{equation*}
\begin{equation}\label{equO2intrep}
		d^{\alpha}_{2,i}=\frac{1}{\Gamma(1-\alpha)\Gamma(\alpha)}\int_0^{\infty}s^{\alpha-1} \frac{\tau s}{1+\sigma}\left(\left(\frac{1}{3+\sigma}\right)^{i+1}-\left(\frac{1}{1-\sigma}\right)^{i+1}\right)ds,\qquad i\geq 1.
\end{equation}
For $i\geq 3$, by simple calculations and Jordan's Lemma (see Appendix \ref{AP2}), we have
\begin{equation}\label{equO2toderinAp}
	\begin{aligned}
		d^{\alpha}_{2,i}=&-\frac{2^{2+2\alpha}(-1)^{-\alpha}}{\tau^{\alpha}\Gamma(1-\alpha)\Gamma(\alpha)}\int_{-1}^{1}(1-s)^{\alpha}(1+s)^{2-2\alpha}\bar{d}^1_{2,i}(s)ds\\
		&-\frac{2^{-\alpha-3}}{\tau^{\alpha}\Gamma(1-\alpha)\Gamma(\alpha)}\int_{-1}^{1}(1-s)^{\alpha}(1+s)^{2-2\alpha}\bar{d}^2_{2,i}(s)ds,
	\end{aligned}
\end{equation}
where
\begin{equation}\label{equdefofbardj2}
	\begin{aligned}
		&\bar{d}^1_{2,i}(s)=\left(\frac{1}{s+5}\right)^{4}\left(\frac{s+1}{s+5}\right)^{i-3},\\
		&\bar{d}^2_{2,i}(s)=\left(1+3s\right)^{\alpha}\left(\frac{s+1}{2}\right)^{i-3},\qquad i=3,4,\cdots.
	\end{aligned}
\end{equation}
Thus $d^\alpha_{2,i}$ can be approximated as
\begin{equation}\label{eqintegraltoapproO2}
	d^{\alpha}_{2,i}=\sum_{j=1}^{N_{p,1}}w^{\alpha}_{1,j}\bar{d}^{1}_{2,i}(s^{\alpha}_{1,j})+\sum_{j=1}^{N_{p,2}}w^{\alpha}_{2,j}\bar{d}^2_{2,i}(s^{\alpha}_{2,j})+\epsilon^{\alpha}_{2,i}, \qquad i=3,4,\cdots,
\end{equation}
where $\{w^{\alpha}_{1,j}\}_{j=1}^{N_{p,1}}$, $\{w^{\alpha}_{2,j}\}_{j=1}^{N_{p,2}}$ denote the integration weights, $\{s^{\alpha}_{1,j}\}_{j=1}^{N_{p,1}}$, $\{s^{\alpha}_{2,j}\}_{j=1}^{N_{p,2}}$ are the integral points, $N_{p,1}$, $N_{p,2}$ signify the number of integration points, and $\{\epsilon^{\alpha}_{2,i}\}_{i=3}^{\infty}$ indicate the errors caused by the integration approximation. For convenience, we set $\epsilon^{\alpha}_{2,0}=\epsilon^{\alpha}_{2,1}=\epsilon^{\alpha}_{2,2}=0$. To make $|\epsilon^\alpha_{2,i}|$ small enough, the Gauss-Jacobi rule can be used to obtain $w^{\alpha}_{1,j}$, $w^{\alpha}_{2,j}$ and $s^{\alpha}_{1,j}$, $s^{\alpha}_{2,j}$.

\begin{remark}
	According to \eqref{equdefofdj2}, the weights $d^\alpha_{2,i}~ (i=0,1,\cdots)$ are real numbers, but we find that $\bar{d}^\alpha_{2,i}(s)$ for $s<-\frac{1}{3}$ and $w^\alpha_{1,j}$ are complex numbers,  which are caused by two terms $(1+3s)^{\alpha}$ and $(-1)^{-\alpha}$,  respectively. So, to reduce the computation time but without losing precision, we use the real parts of  $\bar{d}^\alpha_{2,i}(s)$ and $w^\alpha_{1,j}$ to accomplish the simulation.
\end{remark}

Then, the Riemann-Liouville fractional derivative can be discretized as
\begin{equation}\label{equrelationO2}
	\begin{aligned}
		\!_0D^{\alpha}_t G(t_n)\approx&\sum_{i=0}^{n}d^{\alpha}_{2,i}G(t_{n-i})\quad {\rm (classical~SBD~discretization)}\\
		=&\sum_{i=0}^{\min(N_s-1,n)}d^{\alpha}_{2,i} G(t_{n-1})+\sum_{i=N_s}^{n}\epsilon^{\alpha}_{2,i}G(t_{n-i})\\
		&+\sum_{i=N_s}^{n}\sum_{j=1}^{N_{p,1}}w^{\alpha}_{1,j}\bar{d}^1_{2,i}(s^{\alpha}_{1,j})G(t_{n-i})+\sum_{i=N_s}^{n}\sum_{j=1}^{N_{p,2}}w^{\alpha}_{2,j}\bar{d}^2_{2,i}(s^{\alpha}_{2,j})G(t_{n-i}),
	\end{aligned}
\end{equation}
where $N_s$ is a parameter that ensures the accuracy of the discretization.
\begin{remark}
Here the reason that we introduce the parameter $N_s$ is that $d_{2,i}^{\alpha}$ can't be approximated effectively by the Gauss-Jacobi rule when $i$ is small, so we start from the $N_s$-th term to approximate $d_{2,i}^{\alpha}$, i.e., we still use the first $N_s$ weights $d_{2,i}^{\alpha}$ generated by convolution quadrature in the fast SBD discretization and $\epsilon^\alpha_{2,i}=0$ for $i<N_s$. The detailed discussions on the value of $N_s$ will be presented in the numerical experiments.
\end{remark}
To get the fast evaluation of \eqref{equrelationO2}, we rewrite it as

\begin{equation*}
	\begin{aligned}
		\!_0D^{\alpha}_t G(t_n)\approx&\sum_{i=0}^{\min(N_s-1,n)}d^{\alpha}_{2,i} G(t_{n-i})\\
		&+\sum_{i=N_s}^{n}\sum_{j=1}^{N_{p,1}}w^{\alpha}_{1,j}\bar{d}^1_{2,i}(s^{\alpha}_{1,j})G(t_{n-i})+\sum_{i=N_s}^{n}\sum_{j=1}^{N_{p,2}}w^{\alpha}_{2,j}\bar{d}^2_{2,i}(s^{\alpha}_{2,j})G(t_{n-i})\\
		=& \sum_{i=0}^{\min(N_s-1,n)}d^{\alpha}_{2,i} G(t_{n-i})\\
		&+\sum_{j=1}^{N_{p,1}}\mathcal{G}^{2,1}_{hist,j}(t_n)+\sum_{j=1}^{N_{p,2}}\mathcal{G}^{2,2}_{hist,j}(t_n), \qquad n=2,3,\ldots,
	\end{aligned}
\end{equation*}
where
\begin{equation}\label{equdefO2historyterm}
	\mathcal{G}^{2,1}_{hist,j}(t_n)=w^{\alpha}_{1,j}\sum_{i=N_s}^{n}\bar{d}^{1}_{2,i}(s^{\alpha}_{1,j})G(t_{n-i}),\qquad \mathcal{G}^{2,2}_{hist,j}(t_n)=w^{\alpha}_{2,j}\sum_{i=N_s}^{n}\bar{d}^{2}_{2,i}(s^{\alpha}_{2,j})G(t_{n-i}),
\end{equation}
and we also call the two terms history parts. Using the property of geometrical sequences $\{\bar{d}^{1}_{2,i}(s)\}_{i=2}^{\infty}$ and $\{\bar{d}^{2}_{2,i}(s)\}_{i=2}^{\infty}$ defined in \eqref{equdefofbardj2}, we obtain
\begin{equation*}
	\begin{aligned}
		&\mathcal{G}^{2,1}_{hist,j}(t_n)=\frac{s^{\alpha}_{1,j}+1}{s^{\alpha}_{1,j}+5}\mathcal{G}^{2,1}_{hist,j}(t_{n-1})+w^{\alpha}_{1,j}\left (\frac{1}{s^{\alpha}_{1,j}+5}\right )^4\left(\frac{s^{\alpha}_{1,j}+1}{s^{\alpha}_{1,j}+5}\right)^{N_s-3}G(t_{n-N_s}),\\
		&\mathcal{G}^{2,2}_{hist,j}(t_n)=\frac{1+s^{\alpha}_{2,j}}{2}\mathcal{G}^{2,2}_{hist,j}(t_{n-1})+w^{\alpha}_{2,j}\left (1+3s^{\alpha}_{2,j}\right )^{\alpha}\left(\frac{1+s^{\alpha}_{2,j}}{2}\right)^{N_s-3} G(t_{n-N_s}).
	\end{aligned}
\end{equation*}
Thus we can get $\mathcal{G}^{2,1}_{hist,j}(t_n)$ and $\mathcal{G}^{2,2}_{hist,j}(t_n)$ from $\mathcal{G}^{2,1}_{hist,j}(t_{n-1})$, $\mathcal{G}^{2,2}_{hist,j}(t_{n-1})$ and $G(t_{n-N_s})$ instead of calculating the sum of $\bar{d}^{1}_{2,n-i}(s^{\alpha}_{1,j})G(t_i)$ and the sum of $\bar{d}^{2}_{2,n-i}(s^{\alpha}_{2,j})G(t_i)$. So the computation time is reduced from $\mathcal{O}(N^2)$ to $\mathcal{O}(N(N_{p,1}+N_{p,2}+N_s))$ and the total memory requirement is reduced from  $\mathcal{O}(N)$ to $\mathcal{O}(N_{p,1}+N_{p,2}+N_s)$, where $N$ is the total number of the time steps, $N_{p,1}$ and $N_{p,2}$ are the number of the integral points, and $N_s$ is a parameter that ensures the accuracy of the approximation.

\section{Error analysis} \label{sec:4}
Now, we apply the fast BE and SBD algorithms developed in Section \ref{sec:3} to solve the system of fractional partial differential equations \eqref{equrqtosol} and give error analyses of the fast BE and SBD schemes, respectively.
\subsection{Error estimates for the fast BE scheme}
According to \cite{Nie2018}, we have the following BE scheme
\begin{equation}\label{equO1tradis}
	\left \{\begin{aligned}
		&\frac{\bar{G}^n_{1}-\bar{G}^{n-1}_{1}}{\tau}+a\sum_{i=0}^{n-1}d^{1-\alpha_1}_{1,i}\bar{G}^{n-i}_{1}+
		\sum_{i=0}^{n-1}d^{1-\alpha_1}_{1,i}A \bar{G}^{n-i}_{1}=a\sum_{i=0}^{n-1}d^{1-\alpha_2}_{1,i}\bar{G}^{n-i}_{2}\qquad {\rm in}~\Omega,~n\geq 1,\\
		&\frac{\bar{G}^n_{2}-\bar{G}^{n-1}_{2}}{\tau}+a\sum_{i=0}^{n-1}d^{1-\alpha_2}_{1,i}\bar{G}^{n-i}_{2}+
		\sum_{i=0}^{n-1}d^{1-\alpha_2}_{1,i}A \bar{G}^{n-i}_{2}=a\sum_{i=0}^{n-1}d^{1-\alpha_1}_{1,i}\bar{G}^{n-i}_{1}\qquad {\rm in}~\Omega,~n\geq 1,\\
		&\bar{G}^0_{1}=G_{1}(0),\quad \bar{G}^0_{2}=G_{2}(0)\qquad\qquad\qquad\qquad\qquad\qquad\qquad\qquad\qquad\qquad\ \ {\rm in}~\Omega,\\ 		&\bar{G}^n_{1}=\bar{G}^n_{2}=0\qquad\qquad\qquad\qquad\qquad\qquad\qquad\qquad\qquad\qquad\qquad\qquad\qquad\: \: {\rm on}~\partial\Omega,~n\geq 0,
	\end{aligned}\right .
\end{equation}
where $ \bar{G}^n_{1} $, $\bar{G}^n_{2}$ are the numerical solutions of $G_1$, $G_2$ at time $t_n$. According to \eqref{equO1fastRL}, we modify the system \eqref{equO1tradis} as the fast BE scheme, i.e.,
\begin{equation}\label{equO1fastdispre}
	\left \{\begin{aligned}
		&\frac{G^n_{1}-G^{n-1}_{1}}{\tau}+d^{1-\alpha_1}_{1,0}(aG^n_1+AG^n_1)+d^{1-\alpha_1}_{1,1}(aG^{n-1}_1+AG^{n-1}_1)\\
		&+a\sum_{i=2}^{n-1}\sum_{j=0}^{N_p}w^{1-\alpha_1}_{j}\bar{d}_{1,i}(s^{1-\alpha_1}_{j})G^{n-i}_{1}+
		\sum_{i=2}^{n-1}\sum_{j=0}^{N_p}w^{1-\alpha_1}_{j}\bar{d}_{1,i}(s^{1-\alpha_1}_{j})A G^{n-i}_{1}=\\
		&\qquad d^{1-\alpha_2}_{1,0}aG^n_2+d^{1-\alpha_2}_{1,1}aG^{n-1}_2+ a\sum_{i=2}^{n-1}\sum_{j=0}^{N_p}w^{1-\alpha_2}_{j}\bar{d}_{1,i}(s^{1-\alpha_2}_{j})G^{n-i}_{2}\qquad {\rm in}~\Omega,~n\geq 1,\\
		&\frac{G^n_{2}-G^{n-1}_{2}}{\tau}+d^{1-\alpha_2}_{1,0}(aG^n_2+AG^n_2)+d^{1-\alpha_2}_{1,1}(aG^{n-1}_2+AG^{n-1}_2)\\
		&+a\sum_{i=2}^{n-1}\sum_{j=0}^{N_p}w^{1-\alpha_2}_{j}\bar{d}_{1,i}(s^{1-\alpha_2}_{j})G^{n-i}_{2}+
		\sum_{i=2}^{n-1}\sum_{j=0}^{N_p}w^{1-\alpha_2}_{j}\bar{d}_{1,i}(s^{1-\alpha_2}_{j})A G^{n-i}_{2}=\\
		&\qquad d^{1-\alpha_1}_{1,0}aG^n_1+d^{1-\alpha_1}_{1,1}aG^{n-1}_1+a\sum_{i=2}^{n-1}\sum_{j=0}^{N_p}w^{1-\alpha_1}_{j}\bar{d}_{1,i}(s^{1-\alpha_1}_{j})G^{n-i}_{1}\qquad {\rm in}~\Omega,~n\geq 1,\\
		&G^0_{1}=G_{1}(0),\quad G^0_{2}=G_{2}(0)\qquad\qquad\qquad\qquad\qquad\qquad\qquad\qquad\qquad\:  {\rm in}~\Omega,\\
		&G^n_{1}=G^n_{2}=0\qquad\qquad\qquad\qquad\qquad\qquad\qquad\qquad\qquad\qquad\qquad\qquad  {\rm on}~\partial\Omega,~n\geq 0.
	\end{aligned}\right .
\end{equation}
where $ G^n_{1} $, $G^n_{2}$ are the numerical solutions of $G_1$, $G_2$ at time $t_n$. From Eq. \eqref{equrelationO1}, the system  \eqref{equO1fastdispre} can be rewritten as
\begin{equation}\label{equO1fastdis}
	\left \{\begin{aligned}
		&\frac{G^n_{1}-G^{n-1}_{1}}{\tau}+a\sum_{i=0}^{n-1}d^{1-\alpha_1}_{1,i}G^{n-i}_{1}+
		\sum_{i=0}^{n-1}d^{1-\alpha_1}_{1,i}A G^{n-i}_{1}=a\sum_{i=0}^{n-1}d^{1-\alpha_2}_{1,i}G^{n-i}_{2}\\
		&\qquad\qquad+\sum_{i=0}^{n-1}\epsilon^{1-\alpha_1}_{1,i}(aG^{n-i}_1+AG^{n-i}_1)-\sum_{i=0}^{n-1}\epsilon^{1-\alpha_2}_{1,i}aG^{n-i}_2\qquad {\rm in}~\Omega,~n\geq 1,\\
		&\frac{G^n_{2}-G^{n-1}_{2}}{\tau}+a\sum_{i=0}^{n-1}d^{1-\alpha_2}_{1,i}G^{n-i}_{2}+
		\sum_{i=0}^{n-1}d^{1-\alpha_2}_{1,i}A G^{n-i}_{2}=a\sum_{i=0}^{n-1}d^{1-\alpha_1}_{1,i}G^{n-i}_{1}\\
		&\qquad\qquad+\sum_{i=0}^{n-1}\epsilon^{1-\alpha_2}_{1,i}(aG^{n-i}_2+AG^{n-i}_2)-\sum_{i=0}^{n-1}\epsilon^{1-\alpha_1}_{1,i}aG^{n-i}_1\qquad {\rm in}~\Omega,~n\geq 1,\\
		&G^0_{1}=G_{1}(0),\quad G^0_{2}=G_{2}(0)\qquad\qquad\qquad\qquad\qquad\qquad\qquad\quad\:\:  {\rm in}~\Omega,\\
		&G^n_{1}=G^n_{2}=0\qquad\qquad\qquad\qquad\qquad\qquad\qquad\qquad\qquad\quad\qquad\:  {\rm on}~\partial\Omega,~n\geq 0.
	\end{aligned}\right .
\end{equation}
\begin{remark}
	Following \cite{Lubich1996}, we omit $G(t_0)$ in \eqref{equrelationO1} when we construct the BE scheme of system \eqref{equrqtosol}, which is helpful in the approximation of the Riemann-Liouville fractional derivative in the system \eqref{equO1fastdis}. Accordingly, the history part defined in \eqref{equdefO1historyterm} is modified as
	\begin{equation*}
	\mathcal{G}^1_{hist,j}(t_n)=w^{\alpha}_j\sum_{i=1}^{n-2}\bar{d}_{1,n-i}(s^{\alpha}_j)G(t_i).
	\end{equation*}
%
\end{remark}
\begin{lemma}[\cite{Jin2017}]\label{lemdelta}
	For any $\theta\in (\pi/2,\pi)$, where $\theta=\arg(z)$, there exist positive constants $c_1$, $c_2$ such that
	\begin{equation*}
		c_1|z|\leq|\delta(e^{-z\tau})|\leq c_2|z|,
	\end{equation*}
	where $\delta(\zeta)=(1-\zeta)/\tau$ or $\delta(\zeta)=((1-\zeta)+(1-\zeta)^2/2)/\tau$.
\end{lemma}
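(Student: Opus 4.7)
The plan is to substitute $w = z\tau$ and reduce the claim to a uniform two-sided bound on a fixed scalar function of $w$. In the backward Euler case, $\delta(e^{-z\tau}) = (1-e^{-w})/\tau$, so the inequality is equivalent to $c_1 \le |(1-e^{-w})/w| \le c_2$. In the SBD case, a direct expansion gives $(1-\zeta) + (1-\zeta)^2/2 = (1-\zeta)(3-\zeta)/2$ with $\zeta = e^{-w}$, so the inequality reduces to $c_1 \le |(1-e^{-w})(3-e^{-w})/(2w)| \le c_2$. Since $\arg w = \arg z$ for $\tau > 0$, $w$ inherits the angular constraint $|\arg w| \le \theta$.

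I would then split into two regimes. Near $w=0$, Taylor expansions give $(1-e^{-w})/w = 1 - w/2 + O(w^2)$ and $(1-e^{-w})(3-e^{-w})/(2w) = 1 + O(w^2)$, so both quotients are uniformly close to $1$ on a small disk $|w|\le r_0$, which handles that regime. For the regime $|w| \ge r_0$ (with $|w|$ kept bounded in practice, since in the convolution quadrature framework $z$ lies on a truncated contour with $|\mathrm{Im}\,z| \le \pi/\tau$), I would use a compactness argument: the two quotients are continuous on the truncated sector $\{w \in \Sigma_\theta : r_0 \le |w| \le R,\ |\mathrm{Im}\,w| \le \pi\}$, so it suffices to check that they have no zeros there, which then forces them to be bounded above and below by positive constants depending only on $\theta$.

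The one nontrivial verification is the zero set. The roots of $1-e^{-w}$ are $w = 2\pi\mathbf{i}k$, $k \in \mathbb{Z}$; $k=0$ is excluded by $|w| \ge r_0$, and $|k| \ge 1$ gives $|\mathrm{Im}\,w| \ge 2\pi$, excluded by the truncation. The additional factor $3-e^{-w}$ in the SBD case vanishes on $w = -\ln 3 + 2\pi\mathbf{i}k$; the $k=0$ root lies on the negative real axis and is excluded by $|\arg w| \le \theta < \pi$, while $|k|\ge 1$ is again excluded by the imaginary-part bound. The main obstacle I anticipate is precisely this issue of truncation: without the restriction $|\mathrm{Im}\,w|\le\pi$ the lower bound breaks (if $\mathrm{Re}(w)\to+\infty$ within the sector, then $|1-e^{-w}|\to 1$ while $|w|\to\infty$), so the constants $c_1, c_2$ must be understood relative to the truncated contour used in the subsequent error analysis. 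Once that truncation is in place, the Taylor estimate near $0$ together with continuity and compactness on the truncated sector yields the stated bounds for both choices of $\delta(\zeta)$.
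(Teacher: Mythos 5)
Your argument is correct. The paper itself gives no proof of this lemma (it is quoted from \cite{Jin2017}), and your substitution $w=z\tau$ followed by a Taylor bound near $w=0$ and a compactness/non-vanishing argument on the truncated sector is essentially the standard proof given in that reference, including the factorization $(1-\zeta)+(1-\zeta)^2/2=(1-\zeta)(3-\zeta)/2$ and the location of the zeros $w=2\pi\mathbf{i}k$ and $w=-\ln 3+2\pi\mathbf{i}k$. You are also right to flag the truncation $|\mathrm{Im}\,w|\le\pi$ as indispensable: as stated in the paper the lemma is imprecise, and the bound is only valid for $z$ on the truncated contour $\Gamma^{\tau}_{\theta,\kappa}$ (equivalently $|z|\le\pi/(\tau\sin\theta)$, so $|w|\le\pi/\sin\theta$), which is exactly how it is used in the subsequent error analysis; note that on the full sector it is in fact both bounds that can fail, the lower one along the positive real axis and the upper one along the rays $\arg w=\pm\theta$ where $\mathrm{Re}\,w\to-\infty$ makes $|1-e^{-w}|$ grow exponentially.
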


According to \cite{Nie2018}, we have the following estimates between \eqref{equrqtosol} and \eqref{equO1tradis}.
\begin{theorem}\label{thmO1traest}
	Let $G_{1}$, $G_{2}$ and $\bar{G}^n_{1}$, $\bar{G}^n_{2}$ be, respectively, the solutions of the systems \eqref{equrqtosol} and \eqref{equO1tradis}. Then
	\begin{equation*}
		\begin{aligned}
			\|G_{1}(t_n)-\bar{G}^n_{1}\|_{L^2(\Omega)}\leq C\tau\left(t_n^{-1}\|G_{1}(0)\|_{L^2(\Omega)}+t_n^{\alpha_2-1}\|G_{2}(0)\|_{L^2(\Omega)}\right),\\
			\|G_{2}(t_n)-\bar{G}^n_{2}\|_{L^2(\Omega)}\leq C\tau\left(t_n^{\alpha_1-1}\|G_{1}(0)\|_{L^2(\Omega)}+t_n^{-1}\|G_{2}(0)\|_{L^2(\Omega)}\right).
		\end{aligned}
	\end{equation*}
\end{theorem}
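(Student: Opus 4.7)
The plan is to use the Laplace transform / generating function framework that is standard in convolution quadrature error analysis. Inverting the expression for $\tilde{G}_1$ given in the excerpt yields
\begin{equation*}
G_1(t_n) = \frac{1}{2\pi\mathbf{i}}\int_{\Gamma_{\theta,\kappa}} e^{zt_n}\bigl(H_{\alpha_1}(z)z^{\alpha_1-1}G_1(0) + aH(z)z^{\alpha_1-1}G_2(0)\bigr)\,dz,
\end{equation*}
and the first step is to derive an analogous contour representation for $\bar{G}^n_1$.

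To do so, I would apply the generating function $\tilde{\bar{G}}_i(\zeta) = \sum_{n\geq 0}\bar{G}^n_i\zeta^n$ to the system \eqref{equO1tradis}. Because the convolution sums in \eqref{equO1tradis} omit the $\bar{G}^0$ term, the generating function turns each discrete Riemann-Liouville operator into $\delta(\zeta)^{1-\alpha_j}(\tilde{\bar{G}}_j(\zeta)-G_j(0))$. The resulting $2\times 2$ linear system has the same algebraic structure as the continuous one with $z$ replaced by $\delta(\zeta)$, so it can be solved explicitly; inverting via $\zeta = e^{-z\tau}$ then produces a contour integral over $\Gamma^{\tau}_{\theta,\kappa}:=\Gamma_{\theta,\kappa}\cap\{|\Im z|\leq \pi/\tau\}$ with integrand obtained by replacing $z$ by $\delta(e^{-z\tau})$ in the continuous kernel.

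Subtracting the two representations expresses the error $G_1(t_n)-\bar{G}^n_1$ as a contour integral of the difference of the two kernels. Using the first-order consistency $|\delta(e^{-z\tau})-z|\leq C\tau|z|^2$ for BE, the norm equivalence of Lemma \ref{lemdelta} (which ensures that $\delta(e^{-z\tau})$ stays in a sector where Lemma \ref{lemtheestimateofHblabla} applies), and the resolvent estimates themselves, I would bound each kernel difference via a mean-value-type identity. The $G_1(0)$-kernel $H_{\alpha_1}(z)z^{\alpha_1-1}$ scales like $|z|^{-1}$, so its derivative scales like $|z|^{-2}$ and the difference is $O(\tau)$ uniformly in $|z|$; the $G_2(0)$-kernel $aH(z)z^{\alpha_1-1}$ scales like $|z|^{-\alpha_2-1}$ because of the stronger decay $\|H(z)\|\leq C|z|^{-\alpha_1-\alpha_2}$ from Lemma \ref{lemtheestimateofHblabla}, giving a difference of order $O(\tau|z|^{-\alpha_2})$.

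Finally, with the standard choice $\kappa = 1/t_n$ and the two rays of $\Gamma_{\theta,\kappa}$ parametrized by $|z|=r$, the factor $e^{zt_n}$ decays like $e^{-crt_n}$; the identity $\int_{1/t_n}^{\infty} e^{-crt_n}r^{\mu}\,dr \asymp t_n^{-\mu-1}$ converts the $O(\tau)$ bound into $C\tau t_n^{-1}\|G_1(0)\|_{L^2(\Omega)}$ and the $O(\tau|z|^{-\alpha_2})$ bound into $C\tau t_n^{\alpha_2-1}\|G_2(0)\|_{L^2(\Omega)}$, matching the first inequality of the theorem; the truncation at $|\Im z|=\pi/\tau$ contributes only exponentially small terms, and the second inequality follows by the symmetric argument with indices $1\leftrightarrow 2$ swapped. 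The main obstacle I anticipate is correctly extracting the off-diagonal scaling: the non-standard exponent $\alpha_2-1$ emerges not from the direct term but from the cross-coupling $aH(z)z^{\alpha_1-1}$, and one has to be careful that the kernel difference inherits the extra $|z|^{-\alpha_2}$ decay rather than losing it when one passes from $f(z)$ to $f(z)-f(\delta(e^{-z\tau}))$.
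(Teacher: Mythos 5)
Your proposal is correct and follows essentially the same route as the paper: Theorem~\ref{thmO1traest} itself is imported from [Nie2018] without proof, but the identical contour-integral argument is carried out in full for the second-order analogue (Theorem~\ref{thmO2CQerr}), with the same splitting into the truncated-contour tail $\Gamma_{\theta,\kappa}\setminus\Gamma^{\tau}_{\theta,\kappa}$, the kernel difference controlled via $|\delta(e^{-z\tau})-z|\leq C\tau|z|^{2}$ together with resolvent-derivative bounds from Lemmas~\ref{lemtheestimateofHblabla} and~\ref{lemdelta}, and the same $\int_{\kappa}^{\infty}e^{-crt_n}r^{\mu}\,dr\asymp t_n^{-\mu-1}$ conversion with $\kappa\sim 1/t_n$; your identification of the cross-coupling kernel $aH(z)z^{\alpha_1-1}=O(|z|^{-\alpha_2-1})$ as the source of the $t_n^{\alpha_2-1}$ weight is exactly right. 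The only minor imprecision is that the discrete representation carries an extra prefactor $e^{-z\tau}$ (the BE analogue of the factor $\mu(e^{-z\tau})$ tracked as term III in the paper's SBD proof), whose deviation from $1$ is $O(\tau|z|)$ and contributes another first-order term that is absorbed into the same bound.
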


Now we provide the regularity of solutions for the system \eqref{equO1tradis}.
\begin{theorem}\label{thmO1trareg}
	Let $\bar{G}^n_{1}$, $\bar{G}^n_{2}$ be the solutions of the system \eqref{equO1tradis}. Then we have
	\begin{equation*}
		\begin{aligned} 
			&\|A^\nu \bar{G}^n_1\|_{L^2(\Omega)}\leq C\left (t^{-\nu\alpha_1}\|G_{1}(0)\|_{L^2(\Omega)}+\|G_{2}(0)\|_{L^2(\Omega)}\right ),\\
			&\|A^\nu \bar{G}^n_2\|_{L^2(\Omega)}\leq C\left (\|G_{1}(0)\|_{L^2(\Omega)}+t^{-\nu\alpha_2}\|G_{2}(0)\|_{L^2(\Omega)}\right )
		\end{aligned}
	\end{equation*}
	for $ \nu=0,1 $; furthermore, if $G_{1}(0),G_{2}(0)\in H^1_0(\Omega)\bigcap H^2(\Omega)$, there exist
	\begin{equation*}
		\begin{aligned}
			&\|A \bar{G}^n_1\|_{L^2(\Omega)}\leq C\left (\|AG_{1}(0)\|_{L^2(\Omega)}+\|AG_{2}(0)\|_{L^2(\Omega)}\right ),\\
			&\|A \bar{G}^n_2\|_{L^2(\Omega)}\leq C\left (\|AG_{1}(0)\|_{L^2(\Omega)}+\|AG_{2}(0)\|_{L^2(\Omega)}\right ).
		\end{aligned}
	\end{equation*}
\end{theorem}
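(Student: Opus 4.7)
The plan is to mimic the continuous contour representation of $G_{1},G_{2}$ via Laplace transform at the discrete level, then read off the decay rates in $t_{n}$ by applying the resolvent bounds in Lemmas~\ref{lemtheestimateofHblabla} and \ref{lemaAH} on the deformed contour $\Gamma_{\theta,\kappa}$, transferring continuous symbols to discrete ones through Lemma~\ref{lemdelta}.

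First I would take the generating function of the semi-discrete system \eqref{equO1tradis}. Multiplying the equations by $\zeta^{n}$ and summing over $n\geq 1$ yields, using $\sum_{n} (\bar{G}^{n}_{i}-\bar{G}^{n-1}_{i})\zeta^{n}/\tau=\delta(\zeta)\tilde{\bar G}_{i}(\zeta)-\delta(\zeta) G_{i}(0)$ (with $\delta(\zeta)=(1-\zeta)/\tau$) and \eqref{equdefofdj1}, an algebraic system whose solution is the exact discrete analogue of the continuous one, namely
\begin{equation*}
\tilde{\bar G}_{1}(\zeta)=H_{\alpha_{1}}(\delta(\zeta))\,\delta(\zeta)^{\alpha_{1}-1}G_{1}(0)+aH(\delta(\zeta))\,\delta(\zeta)^{\alpha_{1}-1}G_{2}(0),
\end{equation*}
and a symmetric identity for $\tilde{\bar G}_{2}(\zeta)$. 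Inverting the $Z$-transform on the circle $|\zeta|=e^{-\kappa\tau}$ and then making the change of variables $\zeta=e^{-z\tau}$ to deform onto a truncated sectorial contour $\Gamma^{\tau}_{\theta,\kappa}=\{z\in\Gamma_{\theta,\kappa}:|\mathrm{Im}(z)|\leq \pi/\tau\}$, I would obtain
\begin{equation*}
\bar G^{n}_{1}=\frac{1}{2\pi \mathbf{i}}\int_{\Gamma^{\tau}_{\theta,\kappa}} e^{zt_{n}}\!\Bigl[H_{\alpha_{1}}(\delta(e^{-z\tau}))\,\delta(e^{-z\tau})^{\alpha_{1}-1}G_{1}(0)+aH(\delta(e^{-z\tau}))\,\delta(e^{-z\tau})^{\alpha_{1}-1}G_{2}(0)\Bigr]dz,
\end{equation*}
and analogously for $\bar G^{n}_{2}$.

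Next I would apply $A^{\nu}$ under the integral and estimate in $L^{2}(\Omega)$ using the operator-norm bounds of Lemmas~\ref{lemtheestimateofHblabla} and \ref{lemaAH}, together with Lemma~\ref{lemdelta} which says $\delta(e^{-z\tau})$ lies in the sector $\Sigma_{\theta',\kappa'}$ with $|\delta(e^{-z\tau})|\asymp|z|$ as long as $\theta$ is close enough to $\pi/2$. For the $G_{1}(0)$-contribution to $\bar G^{n}_{1}$ the integrand is bounded by $e^{\mathrm{Re}(z)t_{n}}|z|^{-1}$ for $\nu=0$ (from $\|H_{\alpha_{1}}\|\leq C|z|^{-\alpha_{1}}$) and by $e^{\mathrm{Re}(z)t_{n}}|z|^{\alpha_{1}-1}$ for $\nu=1$ (from $\|AH_{\alpha_{1}}\|\leq C$). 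Parameterizing $\Gamma_{\theta,\kappa}$ with $\kappa=1/t_{n}$ yields the bounds $C\|G_{1}(0)\|$ and $Ct_{n}^{-\alpha_{1}}\|G_{1}(0)\|$, and interpolation gives the $t_{n}^{-\nu\alpha_{1}}$ factor. For the $G_{2}(0)$-contribution, the key point is to use $\|H(z)\|\leq C|z|^{-\alpha_{1}-\alpha_{2}}$ (so the factor is $|z|^{-\alpha_{2}-1}$, uniformly integrable near zero since $\alpha_{2}>0$) and, for $\nu=1$, the sharper $\|AH(z)\|\leq C\min(|z|^{-\alpha_{1}},|z|^{-\alpha_{2}})$; picking the $|z|^{-\alpha_{2}}$ branch makes the integrand $|z|^{\alpha_{1}-\alpha_{2}-1}$ and contour parameterization again yields a $t_{n}$-independent bound $C\|G_{2}(0)\|$, as required. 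The estimate for $\bar G^{n}_{2}$ is symmetric.

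For the last assertion under $G_{1}(0),G_{2}(0)\in \dot H^{2}(\Omega)$, I would rewrite $A\bar G^{n}_{1}=A^{1-\nu}\,A^{\nu}\bar G^{n}_{1}$ with $\nu$ chosen so that $A$ can be split off onto the data: more directly, write $AH_{\alpha_{1}}(z)z^{\alpha_{1}-1}=(AH_{\alpha_{1}}(z))z^{\alpha_{1}-1}$ but move one power of $A$ onto $G_{1}(0)$ by using $H_{\alpha_{1}}(z)$ commutes with $A$, giving
\begin{equation*}
A\bar G^{n}_{1}=\frac{1}{2\pi\mathbf{i}}\int_{\Gamma^{\tau}_{\theta,\kappa}} e^{zt_{n}}\bigl[H_{\alpha_{1}}(\delta(e^{-z\tau}))\,\delta(e^{-z\tau})^{\alpha_{1}-1}AG_{1}(0)+aH(\delta(e^{-z\tau}))\,\delta(e^{-z\tau})^{\alpha_{1}-1}AG_{2}(0)\bigr]dz,
\end{equation*}
then apply the $\nu=0$ estimates to $AG_{1}(0)$ and $AG_{2}(0)$ to obtain a time-uniform bound. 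The analogous argument handles $A\bar G^{n}_{2}$.

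The main obstacle is the contour manipulation in the discrete variable: one has to justify the passage from the $Z$-transform inversion circle to the sectorial contour and argue that the periodicity correction from restricting $|\mathrm{Im}(z)|\leq \pi/\tau$ is harmless, using the uniform sectorial bound $c_{1}|z|\leq|\delta(e^{-z\tau})|\leq c_{2}|z|$ from Lemma~\ref{lemdelta}. The rest is routine contour estimation once the resolvent bounds from Lemmas~\ref{lemtheestimateofHblabla} and \ref{lemaAH} are in place; in particular, the off-diagonal (cross) contributions exploit the crucial $\min$-estimate $\|AH(z)\|\leq C\min(|z|^{-\alpha_{1}},|z|^{-\alpha_{2}})$, which is precisely why Lemma~\ref{lemaAH} was proved with both factorizations.
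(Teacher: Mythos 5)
Your plan follows the paper's proof essentially verbatim: generating functions of the scheme, inversion of the discrete transform, the substitution $\zeta=e^{-z\tau}$, deformation to $\Gamma^\tau_{\theta,\kappa}$, and then the bounds of Lemmas~\ref{lemtheestimateofHblabla}, \ref{lemaAH} and \ref{lemdelta} on the contour; the $\dot H^2$ case by commuting $A$ onto the data is also exactly what the paper does. Two small slips: (i) the initial-data term in the generating-function identity should be $\zeta G_i(0)/\tau$, not $\delta(\zeta)G_i(0)$ (this only changes the integrand by the bounded factor $e^{-z\tau}$, so the estimates survive); (ii) for the cross term in $\|A\bar G^n_1\|$ you should take the $|z|^{-\alpha_1}$ branch of $\|AH(z)\|\leq C\min(|z|^{-\alpha_1},|z|^{-\alpha_2})$, since it cancels $z^{\alpha_1-1}$ to give the integrable symbol $|z|^{-1}$ and hence a $t_n$-uniform bound, whereas the $|z|^{-\alpha_2}$ branch you chose yields $|z|^{\alpha_1-\alpha_2-1}$ and a bound $Ct_n^{\alpha_2-\alpha_1}$, which blows up as $t_n\to 0$ whenever $\alpha_1>\alpha_2$.
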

\begin{proof}
	Here we take $\kappa\geq 1/t$ for given $t$ and ensure that $\kappa$ is large enough to satisfy the conditions in Lemmas \ref{lemtheestimateofHblabla} and  \ref{lemaAH}. To get the solutions of the system \eqref{equO1tradis}, we multiply by $\zeta^n$ and sum from $1$ to $\infty$ for both sides of the first two equations in \eqref{equO1tradis} and get
	\begin{equation*}
		\begin{aligned}
			&\sum_{n=1}^{\infty}\frac{\zeta^n \bar{G}^n_{1}-\zeta^n \bar{G}^{n-1}_{1}}{\tau}+a\sum_{n=1}^{\infty}\sum_{i=0}^{n-1}d^{1-\alpha_1}_{1,i}\zeta^n\bar{G}^{n-i}_{1}
			+\sum_{n=1}^{\infty}\sum_{i=0}^{n-1}d^{1-\alpha_1}_{1,i}\zeta^nA \bar{G}^{n-i}_{1}\\
			&\quad\quad\quad\quad\quad\quad\quad\quad\quad\quad\quad\quad\quad\quad\quad=a\sum_{n=1}^{\infty}\sum_{i=0}^{n-1}d^{1-\alpha_2}_{1,i}\zeta^n\bar{G}^{n-i}_{2},\\
			&\sum_{n=1}^{\infty}\frac{\zeta^n \bar{G}^n_{2}-\zeta^n \bar{G}^{n-1}_{2}}{\tau}+a\sum_{n=1}^{\infty}\sum_{i=0}^{n-1}d^{1-\alpha_2}_{1,i}\zeta^n\bar{G}^{n-i}_{2}+
			\sum_{n=1}^{\infty}\sum_{i=0}^{n-1}d^{1-\alpha_2}_{1,i}\zeta^nA \bar{G}^{n-i}_{2}\\
			&\quad\quad\quad\quad\quad\quad\quad\quad\quad\quad\quad\quad\quad\quad\quad=a\sum_{n=1}^{\infty}\sum_{i=0}^{n-1}d^{1-\alpha_1}_{1,i}\zeta^n\bar{G}^{n-i}_{1}.\\
		\end{aligned}
	\end{equation*}
	According to \eqref{equdefofdj1}, we have
	\begin{equation*}
		\begin{aligned}
			&\left (\frac{1-\zeta}{\tau}\right )\sum_{i=1}^{\infty}\bar{G}^i_{1}\zeta^i+a\left (\frac{1-\zeta}{\tau}\right )^{1-\alpha_1}\sum_{i=1}^{\infty}\bar{G}^i_{1}\zeta^i+\left (\frac{1-\zeta}{\tau}\right )^{1-\alpha_1}A\sum_{i=1}^{\infty}\bar{G}^i_{1}\zeta^i\\=&a\left (\frac{1-\zeta}{\tau}\right )^{1-\alpha_2}\sum_{i=1}^{\infty}\bar{G}^i_{2}\zeta^i+\frac{\zeta G_{1}(0)}{\tau},\\
			&\left (\frac{1-\zeta}{\tau}\right )\sum_{i=1}^{\infty}\bar{G}^i_{2}\zeta^i+a\left (\frac{1-\zeta}{\tau}\right )^{1-\alpha_2}\sum_{i=1}^{\infty}\bar{G}^i_{2}\zeta^i+\left (\frac{1-\zeta}{\tau}\right )^{1-\alpha_2}A\sum_{i=1}^{\infty}\bar{G}^i_{2}\zeta^i\\=&a\left (\frac{1-\zeta}{\tau}\right )^{1-\alpha_1}\sum_{i=1}^{\infty}\bar{G}^i_{1}\zeta^i+\frac{\zeta G_{2}(0)}{\tau},
		\end{aligned}
	\end{equation*}
	which result in, after simple calculations,
	\begin{equation}\label{equnumsollapformtoest}
		\begin{aligned}
			\sum_{i=1}^{\infty}\bar{G}^i_{1}\zeta^i=&\frac{\zeta}{\tau}\left (H_{\alpha_1}\left(\frac{1-\zeta}{\tau}\right)\left (\frac{1-\zeta}{\tau}\right )^{\alpha_1-1}G_{1}(0)\right.\\
			&\left.\qquad+aH\left(\frac{1-\zeta}{\tau}\right)\left (\frac{1-\zeta}{\tau}\right )^{\alpha_1-1}G_{2}(0)\right ),\\
			\sum_{i=1}^{\infty}\bar{G}^i_{2}\zeta^i=&\frac{\zeta}{\tau}\left (aH\left(\frac{1-\zeta}{\tau}\right)\left (\frac{1-\zeta}{\tau}\right )^{\alpha_2-1}G_{1}(0)\right.\\
			&\left.\quad+H_{\alpha_2}\left(\frac{1-\zeta}{\tau}\right)\left (\frac{1-\zeta}{\tau}\right )^{\alpha_2-1}G_{2}(0)\right ),
		\end{aligned}
	\end{equation}
	where $H$, $H_{\alpha_1}$, and $H_{\alpha_2}$ are defined by \eqref{equdefHz} and \eqref{H12}.
	By \eqref{equnumsollapformtoest}, for $\xi_\tau=e^{-\tau(\kappa+1)}$, there is
	\begin{equation*}
		\begin{aligned}
			\bar{G}^n_{1}=&\frac{1}{2\pi \mathbf{i}\tau}\int_{|\zeta|=\xi_\tau}\zeta^{-n-1}\zeta\times\left (H_{\alpha_1}\left(\frac{1-\zeta}{\tau}\right)\left (\frac{1-\zeta}{\tau}\right )^{\alpha_1-1}G_{1}(0)\right.\\&\qquad\qquad\qquad\qquad\qquad\left.+aH\left(\frac{1-\zeta}{\tau}\right)\left (\frac{1-\zeta}{\tau}\right )^{\alpha_1-1}G_{2}(0)\right )d\zeta.
		\end{aligned}
	\end{equation*}
	Taking $\zeta=e^{-z\tau}$ leads to
	\begin{equation*}
		\begin{aligned}
			\bar{G}^n_{1}=\frac{1}{2\pi \mathbf{i}}&\int_{\Gamma^\tau}e^{zt_n}e^{-z\tau}\times\left (H_{\alpha_1}\left(\frac{1-e^{-z\tau}}{\tau}\right)\left (\frac{1-e^{-z\tau}}{\tau}\right )^{\alpha_1-1}G_{1}(0)\right.\\&\qquad\qquad\qquad\left.+aH\left(\frac{1-e^{-z\tau}}{\tau}\right)\left (\frac{1-e^{-z\tau}}{\tau}\right )^{\alpha_1-1}G_{2}(0)\right )dz,
		\end{aligned}
	\end{equation*}
	where $\Gamma^\tau=\{z=\kappa+1+\mathbf{i}y:y\in\mathbb{R}~{\rm and}~|y|\leq \pi/\tau\}$. Next,  we deform the contour $\Gamma^\tau$ to
	$\Gamma^\tau_{\theta,\kappa}=\{z\in \mathbb{C}:\kappa\leq |z|\leq\frac{\pi}{\tau\sin(\theta)},|\arg z|=\theta\}\bigcup\{z\in \mathbb{C}:|z|=\kappa,|\arg z|\leq\theta\}$. Thus
	\begin{equation}\label{equtrasemidissolG1}
		\begin{aligned}
			\bar{G}^n_{1}=&\frac{1}{2\pi \mathbf{i}}\int_{\Gamma^\tau_{\theta,\kappa}}e^{zt_n}e^{-z\tau}\times\left (H_{\alpha_1}\left(\frac{1-e^{-z\tau}}{\tau}\right)\left (\frac{1-e^{-z\tau}}{\tau}\right )^{\alpha_1-1}G_{1}(0)\right.\\&\left.\qquad\qquad\qquad\qquad+aH\left(\frac{1-e^{-z\tau}}{\tau}\right)\left (\frac{1-e^{-z\tau}}{\tau}\right )^{\alpha_1-1}G_{2}(0)\right )dz.
		\end{aligned}
	\end{equation}
	Combining Lemmas \ref{lemtheestimateofHblabla} and \ref{lemdelta}, we obtain
	\begin{equation*}
		\begin{aligned}
			\|\bar{G}^n_1\|_{L^2(\Omega)}\leq& C\int_{\Gamma^{\tau}_{\theta,\kappa}}e^{\Re(z)t_n}\left (|z|^{-1}\|G_1(0)\|_{L^2(\Omega)}+a|z|^{-\alpha_2-1}\|G_2(0)\|_{L^2(\Omega)}\right )|dz|\\
			\leq&C\|G_1(0)\|_{L^2(\Omega)}+C\|G_2(0)\|_{L^2(\Omega)},
		\end{aligned}
	\end{equation*}
	where $\Re(z)$ denotes the real part of $z$. Similarly, we have
	\begin{equation*}
		\begin{aligned}
			\|\bar{G}^n_2\|_{L^2(\Omega)}
			\leq C\left(\|G_1(0)\|_{L^2(\Omega)}+\|G_2(0)\|_{L^2(\Omega)}\right).
		\end{aligned}
	\end{equation*}
	Multiplying the operator $A$ on both sides of \eqref{equtrasemidissolG1}, we have
	\begin{equation*}
		\begin{aligned}
			A\bar{G}^n_{1}=&\frac{1}{2\pi \mathbf{i}}\int_{\Gamma^\tau_{\theta,\kappa}}e^{zt_n}e^{-z\tau}\times A\left (H_{\alpha_1}\left(\frac{1-e^{-z\tau}}{\tau}\right)\left (\frac{1-e^{-z\tau}}{\tau}\right )^{\alpha_1-1}G_{1}(0)\right.\\&\left.\qquad\qquad\qquad\qquad+aH\left(\frac{1-e^{-z\tau}}{\tau}\right)\left (\frac{1-e^{-z\tau}}{\tau}\right )^{\alpha_1-1}G_{2}(0)\right )dz.
		\end{aligned}
	\end{equation*}
	When $G_1(0),G_2(0)\in L^2(\Omega)$, according to Lemmas \ref{lemtheestimateofHblabla} and \ref{lemdelta}, there is
	\begin{equation*}
		\|A\bar{G}^n_1\|_{L^2(\Omega)}\leq C(t^{-\alpha_1}\|G_1(0)\|_{L^2(\Omega)}+\|G_2(0)\|_{L^2(\Omega)}).
	\end{equation*}
	When $G_1(0),G_2(0)\in \dot{H}^2(\Omega)$, we obtain
	\begin{equation*}
		\|A\bar{G}^n_1\|_{L^2(\Omega)}\leq C\left(\|AG_1(0)\|_{L^2(\Omega)}+\|AG_2(0)\|_{L^2(\Omega)}\right).
	\end{equation*}
	Similarly,
	when $G_1(0),G_2(0)\in L^2(\Omega)$, we have
	\begin{equation*}
		\|A\bar{G}^n_2\|_{L^2(\Omega)}\leq C\left(\|G_1(0)\|_{L^2(\Omega)}+t^{-\alpha_2}\|G_2(0)\|_{L^2(\Omega)}\right).
	\end{equation*}
	When $G_1(0),G_2(0)\in \dot{H}^2(\Omega)$, there exists
	\begin{equation*}
		\|A\bar{G}^n_2\|_{L^2(\Omega)}\leq C\left(\|AG_1(0)\|_{L^2(\Omega)}+\|AG_2(0)\|_{L^2(\Omega)}\right).
	\end{equation*}
\end{proof}

\begin{theorem}\label{thmerrortraadnfast}
	Let $\bar{G}^n_{1}$, $\bar{G}^n_{2}$ and $G^n_{1}$, $G^n_{2}$  be, respectively, the solutions of the systems \eqref{equO1tradis} and \eqref{equO1fastdis}. Then
	\begin{equation*}
		\begin{aligned}
			\|G^n_{1}-\bar{G}^n_{1}\|_{L^2(\Omega)}+
			\|G^n_2-\bar{G}^n_{2}\|_{L^2(\Omega)} \leq Cn\max_{0\leq i\leq n}(\epsilon_{i})\left(\|G_1(0)\|_{L^2(\Omega)}+\|G_2(0)\|_{L^2(\Omega)}\right),
		\end{aligned}
	\end{equation*}
	where $\epsilon_{i}=\max(|\epsilon^{1-\alpha_1}_{1,i}|,|\epsilon^{1-\alpha_2}_{1,i}|)$.
\end{theorem}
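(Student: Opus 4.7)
The plan is to estimate $e^n_j := G^n_j - \bar G^n_j$ by reducing to the very same algebraic structure already handled in Theorem~\ref{thmO1trareg}. Subtracting \eqref{equO1tradis} from \eqref{equO1fastdis}, one sees that $(e^n_1, e^n_2)$ satisfies the same discrete system as $(\bar G^n_1, \bar G^n_2)$ but with vanishing initial data and an inhomogeneous right-hand side
\[
F^n_1 = \sum_{i=0}^{n-1}\epsilon^{1-\alpha_1}_{1,i}(a+A)G^{n-i}_1 - \sum_{i=0}^{n-1}\epsilon^{1-\alpha_2}_{1,i}\, a\, G^{n-i}_2,
\]
and symmetrically $F^n_2$. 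I would then replay the proof of Theorem~\ref{thmO1trareg} almost verbatim: multiply both error equations by $\zeta^n$, sum over $n\ge 1$, and solve the resulting $2\times 2$ algebraic system for the generating functions $\sum_i e^i_j\zeta^i$. Using the same manipulations with \eqref{equdefHz} and \eqref{H12}, this yields
\[
\sum_{i\ge 1} e^i_1\,\zeta^i = \delta(\zeta)^{\alpha_1-1}\bigl[H_{\alpha_1}(\delta(\zeta))\,\tilde F_1(\zeta)+a H(\delta(\zeta))\,\tilde F_2(\zeta)\bigr],
\]
with $\delta(\zeta)=(1-\zeta)/\tau$ and $\tilde F_j(\zeta)=\sum_{n\ge 1}F^n_j\zeta^n$, and a symmetric formula for $e_2$. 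Applying Cauchy's formula with the substitution $\zeta=e^{-z\tau}$ and deforming to the contour $\Gamma^\tau_{\theta,\kappa}$ as in \eqref{equtrasemidissolG1} converts this into a contour integral for $e^n_1$ and $e^n_2$.

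Next I would bound the integrand. The resolvent factors are handled by Lemma~\ref{lemtheestimateofHblabla} and, crucially, by the newly established Lemma~\ref{lemaAH}: the source pairs $(a+A)$ with $G^{n-i}_j$, so when it combines with $H_{\alpha_j}$ (resp.\ $H$) the relevant norms are $\|(a+A)H_{\alpha_j}(z)\|\le C$ and $\|(a+A)H(z)\|\le C|z|^{-\alpha_k}$, which are precisely what Lemma~\ref{lemaAH} provides. Lemma~\ref{lemdelta} then transfers these bounds to $|\delta(e^{-z\tau})|\asymp|z|$. For the generating function of the quadrature errors, I exploit the fact that the $n$-th Taylor coefficient depends only on $\epsilon^{1-\alpha_j}_{1,i}$ for $i\le n$, so on the contour (where $|\zeta|\le 1$) one may use the polynomial truncation bound $\bigl|\sum_{i=0}^{n-1}\epsilon^{1-\alpha_j}_{1,i}\zeta^i\bigr|\le n\max_{0\le i\le n}\epsilon_i$. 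Together with the exponential factor $e^{zt_n}$ and the $|z|$-decay from the resolvent estimates, the contour integral converges and produces the prefactor $Cn\max_i\epsilon_i$.

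The main obstacle is that $F^n_j$ is expressed through $(a+A)G^{n-i}_j$, i.e.\ the fast-scheme solution one is trying to estimate, so the bound is a priori circular. I would break this by writing $G^{n-i}_j=\bar G^{n-i}_j+e^{n-i}_j$. The contribution of $\bar G^{n-i}_j$ is controlled directly by Theorem~\ref{thmO1trareg}: even for nonsmooth data $G_j(0)\in L^2(\Omega)$, the bound $\|A\bar G^k_j\|\le C(t_k^{-\alpha_j}\|G_1(0)\|+\|G_2(0)\|)$ is $\tau$-summable because $\tau\sum_{k=1}^n t_k^{-\alpha_j}\le C\,T^{1-\alpha_j}$, producing an $\mathcal O(n)$ factor that combines with $\max\epsilon_i$ to give the target term $Cn\max\epsilon_i(\|G_1(0)\|+\|G_2(0)\|)$. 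The contribution of $e^{n-i}_j$ carries an additional small factor $\max|\epsilon_i|$ and is absorbed by a standard discrete Gronwall inequality, which is valid provided $N_p$ is sufficiently large that $\max|\epsilon_i|$ is small. Summing the $e^n_1$ and $e^n_2$ estimates yields the claimed bound.
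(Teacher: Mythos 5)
Your proposal follows essentially the same route as the paper: subtract the two schemes, pass to generating functions and solve the resulting $2\times2$ system in terms of $H$, $H_{\alpha_1}$, $H_{\alpha_2}$ evaluated at $\delta(\zeta)=(1-\zeta)/\tau$, absorb $(a+A)$ via Lemma~\ref{lemaAH} on the deformed contour, split $G^m_j=\bar G^m_j\pm e^m_j$, control the $\bar G$ part by Theorem~\ref{thmO1trareg} and close with discrete Gr\"onwall (with the same implicit smallness of $n\max_i\epsilon_i$ that the paper leaves unstated).

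One piece of bookkeeping needs to be pinned down, because as written you risk producing $Cn^2\max_i\epsilon_i$ rather than $Cn\max_i\epsilon_i$: you extract a factor $n$ twice, once from the truncation bound $\bigl|\sum_{i=0}^{n-1}\epsilon^{1-\alpha_j}_{1,i}\zeta^i\bigr|\le n\max_i\epsilon_i$ inside the contour integral for the kernel, and again from summing the $n$ terms $\|(a+A)\bar G^m_j\|$ (or $\|\bar G^m_j\|_{L^2}$) in the convolution with the source. Only one factor of $n$ is available. The paper avoids this by bounding $|\varepsilon^{1-\alpha_j}_1(\zeta)|\le\max_i\epsilon_i/(1-|\zeta|)\le C\tau^{-1}\max_i\epsilon_i$ on $|\zeta|=e^{-\tau(\kappa+1)}$, which yields a kernel bound $\|E^n_{\cdot,\cdot,\cdot}\|\le C\max_i\epsilon_i$ \emph{uniform in $n$}; the single factor $n$ then comes from the $n$ terms of the convolution $\sum_{m=1}^{n}\|E^{n-m}\|\,\|\bar G^m_j\|_{L^2}$, using only the uniform $L^2$ bound of Theorem~\ref{thmO1trareg}. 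Note also that once $(a+A)$ is absorbed into the kernel by Lemma~\ref{lemaAH} (as you correctly propose in your second paragraph), the regularity estimate $\|A\bar G^k_j\|\le C(t_k^{-\alpha_j}\|G_1(0)\|+\|G_2(0)\|)$ and its $\tau$-summability are not needed at all, and relying on them for intermediate times is slightly delicate since $\sum_{m\le n}t_m^{-\alpha_j}\sim n\,t_n^{-\alpha_j}$ is not $O(n)$ with an absolute constant. With the single-counting fixed, your argument coincides with the paper's.
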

\begin{proof}
	In this proof, we take $\kappa\geq 1/t$ for given $t$ and ensure that $\kappa$ is large enough to satisfy the conditions in Lemmas \ref{lemtheestimateofHblabla} and  \ref{lemaAH}. Subtracting \eqref{equO1fastdis} from \eqref{equO1tradis} and denoting $e^n_{1}=\bar{G}^n_1-G^n_1$, $e^n_{2}=\bar{G}^n_2-G^n_2$, we have
	\begin{equation}\label{equerorroffast}
		\begin{aligned}
			&\frac{e^n_{1}-e^{n-1}_{1}}{\tau}+a\sum_{i=0}^{n-1}d^{1-\alpha_1}_{1,i}e^{n-i}_{1}+
			\sum_{i=0}^{n-1}d^{1-\alpha_1}_{1,i}A e^{n-i}_{1}=a\sum_{i=0}^{n-1}d^{1-\alpha_2}_{1,i}e^{n-i}_{2}\\
			&\qquad\qquad+\sum_{i=0}^{n-1}\epsilon^{1-\alpha_1}_{1,i}(ae^{n-i}_1+Ae^{n-i}_1)-\sum_{i=0}^{n-1}\epsilon^{1-\alpha_2}_{1,i}ae^{n-i}_2\\
			&\qquad\qquad-\sum_{i=0}^{n-1}\epsilon^{1-\alpha_1}_{1,i}(a\bar{G}^{n-i}_1+A\bar{G}^{n-i}_1)+\sum_{i=0}^{n-1}\epsilon^{1-\alpha_2}_{1,i}a\bar{G}^{n-i}_2,\\
			&\frac{e^n_{2}-e^{n-1}_{2}}{\tau}+a\sum_{i=0}^{n-1}d^{1-\alpha_2}_{1,i}e^{n-i}_{2}+
			\sum_{i=0}^{n-1}d^{1-\alpha_2}_{1,i}A e^{n-i}_{2}=a\sum_{i=0}^{n-1}d^{1-\alpha_1}_{1,i}e^{n-i}_{1}\\
			&\qquad\qquad+\sum_{i=0}^{n-1}\epsilon^{1-\alpha_2}_{1,i}(ae^{n-i}_2+Ae^{n-i}_2)-\sum_{i=0}^{n-1}\epsilon^{1-\alpha_1}_{1,i}ae^{n-i}_1\\
			&\qquad\qquad-\sum_{i=0}^{n-1}\epsilon^{1-\alpha_2}_{1,i}(a\bar{G}^{n-i}_2+A\bar{G}^{n-i}_2)+\sum_{i=0}^{n-1}\epsilon^{1-\alpha_1}_{1,i}a\bar{G}^{n-i}_1.
		\end{aligned}
	\end{equation}
	Multiplying $\zeta^n$ and summing from $1$ to $\infty$ for the both sides of\ equations in \eqref{equerorroffast} lead to
	\begin{equation*}
		\begin{aligned}
			&\sum_{n=1}^{\infty}\frac{e^n_{1}-e^{n-1}_{1}}{\tau}\zeta^n+a\sum_{n=1}^{\infty}\sum_{i=0}^{n-1}d^{1-\alpha_1}_{1,i}e^{n-i}_{1}\zeta^n+
			\sum_{n=1}^{\infty}\sum_{i=0}^{n-1}d^{1-\alpha_1}_{1,i}A e^{n-i}_{1}\zeta^n=a\sum_{n=1}^{\infty}\sum_{i=0}^{n-1}d^{1-\alpha_2}_{1,i}e^{n-i}_{2}\zeta^n\\
			&\qquad\qquad+\sum_{n=1}^{\infty}\sum_{i=0}^{n-1}\epsilon^{1-\alpha_1}_{1,i}(ae^{n-i}_1+Ae^{n-i}_1)\zeta^n-\sum_{n=1}^{\infty}\sum_{i=0}^{n-1}\epsilon^{1-\alpha_2}_{1,i}ae^{n-i}_2\zeta^n\\
			&\qquad\qquad-\sum_{n=1}^{\infty}\sum_{i=0}^{n-1}\epsilon^{1-\alpha_1}_{1,i}(a\bar{G}^{n-i}_1+A\bar{G}^{n-i}_1)\zeta^n+\sum_{n=1}^{\infty}\sum_{i=0}^{n-1}\epsilon^{1-\alpha_2}_{1,i}a\bar{G}^{n-i}_2\zeta^n,\\
			&\sum_{n=1}^{\infty}\frac{e^n_{2}-e^{n-1}_{2}}{\tau}\zeta^n+a\sum_{n=1}^{\infty}\sum_{i=0}^{n-1}d^{1-\alpha_2}_{1,i}e^{n-i}_{2}\zeta^n+
			\sum_{n=1}^{\infty}\sum_{i=0}^{n-1}d^{1-\alpha_2}_{1,i}A e^{n-i}_{2}\zeta^n=a\sum_{n=1}^{\infty}\sum_{i=0}^{n-1}d^{1-\alpha_1}_{1,i}e^{n-i}_{1}\zeta^n\\
			&\qquad\qquad+\sum_{n=1}^{\infty}\sum_{i=0}^{n-1}\epsilon^{1-\alpha_2}_{1,i}(ae^{n-i}_2+Ae^{n-i}_2)\zeta^n-\sum_{n=1}^{\infty}\sum_{i=0}^{n-1}\epsilon^{1-\alpha_1}_{1,i}ae^{n-i}_1\zeta^n\\
			&\qquad\qquad-\sum_{n=1}^{\infty}\sum_{i=0}^{n-1}\epsilon^{1-\alpha_2}_{1,i}(a\bar{G}^{n-i}_2+A\bar{G}^{n-i}_2)\zeta^n+\sum_{n=1}^{\infty}\sum_{i=0}^{n-1}\epsilon^{1-\alpha_1}_{1,i}a\bar{G}^{n-i}_1\zeta^n.
		\end{aligned}
	\end{equation*}
	Introducing
	\begin{equation*}
		\varepsilon^{\alpha}_1(\zeta)=\sum_{i=0}^{\infty}\epsilon^{\alpha}_{1,i}\zeta^i
	\end{equation*}
	and using \eqref{equdefofdj1}, we have
	\begin{equation*}
		\begin{aligned}
			&\left(\frac{1-\zeta}{\tau}\right)\sum_{n=1}^{\infty}e^n_1\zeta^n+a\left(\frac{1-\zeta}{\tau}\right)^{1-\alpha_1}\sum_{n=1}^{\infty}e^{n}_{1}\zeta^n+
			\left(\frac{1-\zeta}{\tau}\right)^{1-\alpha_1}\sum_{n=1}^{\infty}A e^{n}_{1}\zeta^n=a\left(\frac{1-\zeta}{\tau}\right)^{1-\alpha_2}\sum_{n=1}^{\infty}e^{n}_{2}\zeta^n\\
			&\qquad\qquad+\varepsilon^{1-\alpha_1}_1(\zeta)\sum_{n=1}^{\infty}(ae^{n}_1+Ae^{n}_1)\zeta^n-\varepsilon^{1-\alpha_2}_1(\zeta)\sum_{n=1}^{\infty}ae^{n}_2\zeta^n\\
			&\qquad\qquad-\varepsilon^{1-\alpha_1}_1(\zeta)\sum_{n=1}^{\infty}(a\bar{G}^{n}_1+A\bar{G}^{n}_1)\zeta^n+\varepsilon^{1-\alpha_2}_1(\zeta)\sum_{n=1}^{\infty}a\bar{G}^{n}_2\zeta^n,\\
			&\left(\frac{1-\zeta}{\tau}\right)\sum_{n=1}^{\infty}e^n_2\zeta^n+a\left(\frac{1-\zeta}{\tau}\right)^{1-\alpha_2}\sum_{n=1}^{\infty}e^{n}_{2}\zeta^n+
			\left(\frac{1-\zeta}{\tau}\right)^{1-\alpha_2}\sum_{n=1}^{\infty}A e^{n}_{2}\zeta^n=a\left(\frac{1-\zeta}{\tau}\right)^{1-\alpha_1}\sum_{n=1}^{\infty}e^{n}_{1}\zeta^n\\
			&\qquad\qquad+\varepsilon^{1-\alpha_2}_1(\zeta)\sum_{n=1}^{\infty}(ae^{n}_2+Ae^{n}_2)\zeta^n-\varepsilon^{1-\alpha_1}_1(\zeta)\sum_{n=1}^{\infty}ae^{n}_1\zeta^n\\
			&\qquad\qquad-\varepsilon^{1-\alpha_2}_1(\zeta)\sum_{n=1}^{\infty}(a\bar{G}^{n}_2+A\bar{G}^{n}_2)\zeta^n+\varepsilon^{1-\alpha_1}_1(\zeta)\sum_{n=1}^{\infty}a\bar{G}^{n}_1\zeta^n.
		\end{aligned}
	\end{equation*}
Making simple calculations leads to
	\begin{equation*}
		\begin{aligned}
			\sum_{n=1}^{\infty}e^n_1\zeta^n=&H_{\alpha_1}\left (\frac{1-\zeta}{\tau}\right )\left (\frac{1-\zeta}{\tau}\right )^{\alpha_1-1}\times\left(\varepsilon^{1-\alpha_1}_1(\zeta)(a+A)\sum_{n=1}^{\infty}e^{n}_1\zeta^n-\varepsilon^{1-\alpha_2}_1(\zeta)\sum_{n=1}^{\infty}ae^{n}_2\zeta^n\right.\\
			&\left.-\varepsilon^{1-\alpha_1}_1(\zeta)(a+A)\sum_{n=1}^{\infty}\bar{G}^{n}_1\zeta^n+\varepsilon^{1-\alpha_2}_1(\zeta)\sum_{n=1}^{\infty}a\bar{G}^{n}_2\zeta^n\right)\\
			&+aH\left (\frac{1-\zeta}{\tau}\right )\left (\frac{1-\zeta}{\tau}\right )^{\alpha_1-1}\times\left (\varepsilon^{1-\alpha_2}_1(\zeta)(a+A)\sum_{n=1}^{\infty}e^{n}_2\zeta^n-\varepsilon^{1-\alpha_1}_1(\zeta)\sum_{n=1}^{\infty}ae^{n}_1\zeta^n\right .\\
			&\left .-\varepsilon^{1-\alpha_2}_1(\zeta)(a+A)\sum_{n=1}^{\infty}\bar{G}^{n}_2\zeta^n+\varepsilon^{1-\alpha_1}_1(\zeta)\sum_{n=1}^{\infty}a\bar{G}^{n}_1\zeta^n\right ),\\
			\sum_{n=1}^{\infty}e^n_2\zeta^n=&aH\left (\frac{1-\zeta}{\tau}\right )\left (\frac{1-\zeta}{\tau}\right )^{\alpha_2-1}\times\left(\varepsilon^{1-\alpha_1}_1(\zeta)(a+A)\sum_{n=1}^{\infty}e^{n}_1\zeta^n-\varepsilon^{1-\alpha_2}_1(\zeta)\sum_{n=1}^{\infty}ae^{n}_2\zeta^n\right.\\
			&\left.-\varepsilon^{1-\alpha_1}_1(\zeta)(a+A)\sum_{n=1}^{\infty}\bar{G}^{n}_1\zeta^n+\varepsilon^{1-\alpha_2}_1(\zeta)\sum_{n=1}^{\infty}a\bar{G}^{n}_2\zeta^n\right)\\
			&+H_{\alpha_2}\left (\frac{1-\zeta}{\tau}\right )\left (\frac{1-\zeta}{\tau}\right )^{\alpha_2-1}\times\left (\varepsilon^{1-\alpha_2}_1(\zeta)(a+A)\sum_{n=1}^{\infty}e^{n}_2\zeta^n-\varepsilon^{1-\alpha_1}_1(\zeta)\sum_{n=1}^{\infty}ae^{n}_1\zeta^n\right .\\
			&\left .-\varepsilon^{1-\alpha_2}_1(\zeta)(a+A)\sum_{n=1}^{\infty}\bar{G}^{n}_2\zeta^n+\varepsilon^{1-\alpha_1}_1(\zeta)\sum_{n=1}^{\infty}a\bar{G}^{n}_1\zeta^n\right ).
		\end{aligned}
	\end{equation*}
	Denote
	\begin{equation*}
		\begin{aligned}
			&\sum_{i=0}^{\infty}E^i_{0,1,1}\zeta^i=a^2H\left (\frac{1-\zeta}{\tau}\right )\left (\frac{1-\zeta}{\tau}\right )^{\alpha_1-1}\varepsilon^{1-\alpha_1}_1(\zeta),\\
			&\sum_{i=0}^{\infty}E^i_{0,1,2}\zeta^i=aH\left (\frac{1-\zeta}{\tau}\right )\left (\frac{1-\zeta}{\tau}\right )^{\alpha_1-1}(a+A)\varepsilon^{1-\alpha_2}_1(\zeta),\\
			&\sum_{i=0}^{\infty}E^i_{0,2,1}\zeta^i=aH\left (\frac{1-\zeta}{\tau}\right )\left (\frac{1-\zeta}{\tau}\right )^{\alpha_2-1}(a+A)\varepsilon^{1-\alpha_1}_1(\zeta),\\
			&\sum_{i=0}^{\infty}E^i_{0,2,2}\zeta^i=a^2H\left (\frac{1-\zeta}{\tau}\right )\left (\frac{1-\zeta}{\tau}\right )^{\alpha_2-1}\varepsilon^{1-\alpha_2}_1(\zeta),\\
			&\sum_{i=0}^{\infty}E^i_{1,1,1}\zeta^i=H_{\alpha_1}\left (\frac{1-\zeta}{\tau}\right )\left (\frac{1-\zeta}{\tau}\right )^{\alpha_1-1}(a+A)\varepsilon^{1-\alpha_1}_1(\zeta),\\
			&\sum_{i=0}^{\infty}E^i_{1,1,2}\zeta^i=aH_{\alpha_1}\left (\frac{1-\zeta}{\tau}\right )\left (\frac{1-\zeta}{\tau}\right )^{\alpha_1-1}\varepsilon^{1-\alpha_2}_1(\zeta),\\
			&\sum_{i=0}^{\infty}E^i_{1,2,1}\zeta^i=aH_{\alpha_2}\left (\frac{1-\zeta}{\tau}\right )\left (\frac{1-\zeta}{\tau}\right )^{\alpha_2-1}\varepsilon^{1-\alpha_1}_1(\zeta),\\
			&\sum_{i=0}^{\infty}E^i_{1,2,2}\zeta^i=H_{\alpha_2}\left (\frac{1-\zeta}{\tau}\right )\left (\frac{1-\zeta}{\tau}\right )^{\alpha_2-1}(a+A)\varepsilon^{1-\alpha_2}_1(\zeta).
		\end{aligned}
	\end{equation*}
Consider the estimate of $E^n_{0,1,1}$. Taking $\xi_{\tau}=e^{-\tau(\kappa+1)}$ results in
	\begin{equation*}
		E^n_{0,1,1}=\frac{1}{2\pi \mathbf{i}}\int_{|\zeta|=\zeta_{\tau}}\zeta^{-n-1}a^2H\left (\frac{1-\zeta}{\tau}\right )\left (\frac{1-\zeta}{\tau}\right )^{\alpha_1-1}\varepsilon^{1-\alpha_1}_1(\zeta)d\zeta,
	\end{equation*}
	which leads to
	\begin{equation*}
		\|E^n_{0,1,1}\|\leq C\tau^{-1}\max_{0\leq i\leq n}(|\epsilon^{1-\alpha_1}_{1,i}|)\int_{|\zeta|=\zeta_{\tau}}|\zeta|^{-n-1}\left \|a^2H\left (\frac{1-\zeta}{\tau}\right )\right \| \left |\frac{1-\zeta}{\tau}\right |^{\alpha_1-2}|d\zeta|.
	\end{equation*}
Letting $\zeta=e^{-z\tau}$, we obtain
	\begin{equation*}
		\|E^n_{0,1,1}\|\leq C\max_{0\leq i\leq n}(|\epsilon^{1-\alpha_1}_{1,i}|)\int_{\Gamma^{\tau}}|e^{zt_n}|\left \|a^2H\left (\frac{1-e^{-z\tau}}{\tau}\right )\right \| \left |\frac{1-e^{-z\tau}}{\tau}\right |^{\alpha_1-2}|dz|,
	\end{equation*}
	where $\Gamma^\tau=\{z=\kappa+1+\mathbf{i}y:y\in\mathbb{R}~{\rm and}~|y|\leq \pi/\tau\}$. Next we deform the contour $\Gamma^\tau$ to
	$\Gamma^\tau_{\theta,\kappa}=\{z\in \mathbb{C}:\kappa\leq |z|\leq\frac{\pi}{\tau\sin(\theta)},|\arg z|=\theta\}\bigcup\{z\in \mathbb{C}:|z|=\kappa,|\arg z|\leq\theta\}$. Then
	\begin{equation*}
		\|E^n_{0,1,1}\|\leq C\max_{0\leq i\leq n}(|\epsilon^{1-\alpha_1}_{1,i}|)\int_{\Gamma^{\tau}_{\theta,\kappa}}|e^{zt_n}|\left \|a^2H\left (\frac{1-e^{-z\tau}}{\tau}\right )\right \| \left |\frac{1-e^{-z\tau}}{\tau}\right |^{\alpha_1-2}|dz|.
	\end{equation*}
	Using Lemmas \ref{lemtheestimateofHblabla} and \ref{lemdelta}, we have
	\begin{equation*}
		\|E^n_{0,1,1}\|\leq C\max_{0\leq i\leq n}(|\epsilon^{1-\alpha_1}_{1,i}|)\int_{\Gamma^{\tau}_{\theta,\kappa}}e^{\Re(z)t_n}|z|^{-\alpha_2-2}|dz|\leq C\max_{0\leq i\leq n}(\epsilon_{i}).
	\end{equation*}
	Similarly, we have
	\begin{equation*}
		\|E^n_{0,1,2}\|,\|E^n_{0,2,1}\|,\|E^n_{0,2,2}\|,\|E^n_{1,1,1}\|,\|E^n_{1,1,2}\|,\|E^n_{1,2,1}\|,\|E^n_{1,2,2}\|\leq C\max_{0\leq i\leq n}(\epsilon_{i}).
	\end{equation*}
	Thus
	\begin{equation*}
		\begin{aligned}
			\|e^n_1\|_{L^2(\Omega)}+\|e^n_2\|_{L^2(\Omega)}\leq& C\max_{0\leq i\leq n}(\epsilon_{i})\sum_{i=1}^{n-1}(\|e^i_1\|_{L^2(\Omega)}		+\|e^i_2\|_{L^2(\Omega)})\\
			&+C\max_{0\leq i\leq n}(\epsilon_{i})\sum_{i=1}^{n}(\|\bar{G}^i_1\|_{L^2(\Omega)}+\|\bar{G}^i_2\|_{L^2(\Omega)}).
		\end{aligned}
	\end{equation*}
Combining Gr\"{o}nwall's inequality and Theorem \ref{thmO1trareg} leads to
	\begin{equation*}
		\|e^n_1\|_{L^2(\Omega)}+\|e^n_2\|_{L^2(\Omega)}\leq Cn\max_{0\leq i\leq n}(\epsilon_{i})(\|G_1(0)\|_{L^2(\Omega)}+\|G_2(0)\|_{L^2(\Omega)}).
	\end{equation*}
\end{proof}

Combining Theorem  \ref{thmO1traest} and Theorem \ref{thmerrortraadnfast}, we get the following error estimates for the fast BE scheme.
\begin{theorem}
	Let $G_{1}$, $G_{2}$ and $G^n_{1}$, $G^n_{2}$ be the solutions of the systems \eqref{equrqtosol} and \eqref{equO1fastdis}, respectively. Then we have the estimates, if $G_{1}(0)$, $G_{2}(0)\in L^2(\Omega)$,
	\begin{equation*}
		\begin{aligned}
			\|G_{1}(t_n)-G^n_{1}\|_{L^2(\Omega)}\leq&C\tau\left(t_n^{-1}\|G_{1}(0)\|_{L^2(\Omega)}+t_n^{\alpha_2-1}\|G_{2}(0)\|_{L^2(\Omega)}\right)\\
			&+Cn\max_{0\leq i\leq n}(\epsilon_{i})\left(\|G_{1}(0)\|_{L^2(\Omega)}+\|G_{2}(0)\|_{L^2(\Omega)}\right),\\
			\|G_{2}(t_n)-G^n_{2}\|_{L^2(\Omega)}\leq& C\tau\left(t_n^{\alpha_1-1}\|G_{1}(0)\|_{L^2(\Omega)}+t_n^{-1}\|G_{2}(0)\|_{L^2(\Omega)}\right)\\
			&+Cn\max_{0\leq i\leq n}(\epsilon_{i})\left(\|G_{1}(0)\|_{L^2(\Omega)}+\|G_{2}(0)\|_{L^2(\Omega)}\right),
		\end{aligned}
	\end{equation*}
	where $\epsilon_{i}=\max(|\epsilon^{1-\alpha_1}_{1,i}|,|\epsilon^{1-\alpha_2}_{1,i}|)$.	
\end{theorem}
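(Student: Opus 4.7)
The plan is to combine the two previously established error estimates by a straightforward triangle inequality, since the fast BE solution differs from the exact solution only through (i) the classical BE convolution quadrature error and (ii) the additional error introduced by approximating the quadrature weights via the Gauss-Jacobi rule. Concretely, I would introduce the traditional BE numerical solution $\bar{G}^n_i$ as an intermediate quantity and write
\begin{equation*}
\|G_i(t_n)-G^n_i\|_{L^2(\Omega)} \leq \|G_i(t_n)-\bar{G}^n_i\|_{L^2(\Omega)} + \|\bar{G}^n_i-G^n_i\|_{L^2(\Omega)}, \qquad i=1,2.
\end{equation*}

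For the first term on the right, I would invoke Theorem \ref{thmO1traest} directly; this supplies exactly the $C\tau(t_n^{-1}\|G_{1}(0)\|_{L^2(\Omega)}+t_n^{\alpha_2-1}\|G_{2}(0)\|_{L^2(\Omega)})$ bound for $i=1$ and the symmetric bound for $i=2$, capturing the time-stepping error with the expected singular behavior at $t_n\to 0$ for nonsmooth initial data. For the second term, Theorem \ref{thmerrortraadnfast} provides the bound $Cn\max_{0\leq i\leq n}(\epsilon_{i})(\|G_{1}(0)\|_{L^2(\Omega)}+\|G_{2}(0)\|_{L^2(\Omega)})$, which controls the additional contribution from the rational/integral approximation of the convolution weights. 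Adding the two estimates and absorbing constants into the generic $C$ yields the claimed inequalities.

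No essentially new computation is required: the decomposition is linear, and both invoked theorems are already proved in the excerpt, so there is no hidden circular dependence. The only point that warrants a brief check is consistency of the ranges of validity, namely that Theorem \ref{thmO1traest} assumes $G_1(0),G_2(0)\in L^2(\Omega)$ (which is exactly the hypothesis here) and that Theorem \ref{thmerrortraadnfast} uses the regularity bounds of Theorem \ref{thmO1trareg} which also hold under the same $L^2$ assumption with possibly a $t^{-\nu\alpha_i}$ factor; this factor has already been integrated out in the proof of Theorem \ref{thmerrortraadnfast} through the contour integral, so no residual weight is left behind.

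The main "obstacle," if any, is purely notational: one must verify that $\epsilon_i$ defined in Theorem \ref{thmerrortraadnfast} as $\max(|\epsilon^{1-\alpha_1}_{1,i}|,|\epsilon^{1-\alpha_2}_{1,i}|)$ is identical to the quantity appearing in the statement here, which it is by construction. Once that identification is made, the proof is reduced to a one-line triangle inequality plus citation of the two preceding theorems, with no further analysis required.
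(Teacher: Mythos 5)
Your proposal is correct and is exactly what the paper does: the theorem is stated immediately after the sentence ``Combining Theorem \ref{thmO1traest} and Theorem \ref{thmerrortraadnfast}, we get the following error estimates for the fast BE scheme,'' i.e., the paper's proof is precisely the triangle inequality through the intermediate classical BE solution $\bar{G}^n_i$. Your consistency checks on the hypotheses and on the definition of $\epsilon_i$ are also accurate, so nothing further is needed.
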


\subsection{Error estimates for the fast SBD scheme}
Here, we first provide the SBD scheme of \eqref{equrqtosol} by SBD convolution quadrature and give the error estimate of the SBD scheme. Then we present the error estimate of the fast SBD scheme. According to \cite{Lubich1996,Jin2016}, to keep the accuracy of the scheme, one needs to modify the discretization \eqref{equrelationO2}. Namely,  denoting $\bar{\partial}^{\alpha}_{\tau}$ as the discretization of $_{0}D^{\alpha}_t$ and letting $\partial^{-1}_t$ be the integration on time, from \eqref{equrqtosol} we obtain
\begin{equation*}
	\left.
	\begin{aligned}
		&\bar{\partial}_{\tau}  \bar{G}_1^n+a\bar{\partial}^{1-\alpha_1}_{\tau}(\bar{G}_1^n-G_1(0))+\bar{\partial}^{1-\alpha_1}_{\tau}A(\bar{G}_1^n-G_1(0))-a\bar{\partial}^{1-\alpha_2}_{\tau}(\bar{G}_2^n-G_2(0))\\
		&\qquad\qquad\qquad=-a\bar{\partial}^{1-\alpha_1}_{\tau}\bar{\partial}_\tau \partial^{-1}_t G_1(0)-\bar{\partial}^{1-\alpha_1}_{\tau}\bar{\partial}_\tau \partial^{-1}_t A G_1(0)+a\bar{\partial}^{1-\alpha_2}_{\tau}\bar{\partial}_\tau \partial^{-1}_t G_2(0),\\
		&\bar{\partial}_{\tau} \bar{G}_2^n+a\bar{\partial}^{1-\alpha_2}_{\tau}(\bar{G}_2^n-G_2(0))+\bar{\partial}^{1-\alpha_2}_\tau A (\bar{G}_2^n-G_2(0))-a\bar{\partial}^{1-\alpha_1}_{\tau}(\bar{G}_1^n-G_1(0))\\
		&\qquad\qquad\qquad=-a\bar{\partial}^{1-\alpha_2}_{\tau}\bar{\partial}_\tau \partial^{-1}_t G_2(0)-\bar{\partial}^{1-\alpha_2}_{\tau}\bar{\partial}_\tau \partial^{-1}_t A G_2(0)+a\bar{\partial}^{1-\alpha_1}_{\tau}\bar{\partial}_\tau \partial^{-1}_t G_1(0).
	\end{aligned}
	\right .
\end{equation*}
According to the fact $(0,3/2,1,1,\ldots)=\bar{\partial}_\tau\partial^{-1}_t1$ \cite{Jin2016} and setting $\bar{G}^{-1}_1=G_1(0)$, $\bar{G}^{-1}_2=G_2(0)$, the SBD scheme of \eqref{equrqtosol} can be written as
\begin{equation}\label{equO2tradis}
	\left \{\begin{aligned}
		&\frac{\bar{G}^1_{1}-\bar{G}^{0}_{1}}{\tau}+ad^{1-\alpha_1}_{2,0}\left(\frac{2}{3}\bar{G}^{1}_{1}+\frac{1}{3}\bar{G}^{0}_{1}\right)+
		d^{1-\alpha_1}_{2,0}A \left(\frac{2}{3}\bar{G}^{1}_{1}+\frac{1}{3}\bar{G}^{0}_{1}\right)=ad^{1-\alpha_2}_{2,0}\left(\frac{2}{3}\bar{G}^{1}_{2}+\frac{1}{3}\bar{G}^{0}_{2}\right)\quad {\rm in}~\Omega,\\
		&\frac{\bar{G}^1_{2}-\bar{G}^{0}_{2}}{\tau}+ad^{1-\alpha_2}_{2,0}\left(\frac{2}{3}\bar{G}^{1}_{2}+\frac{1}{3}\bar{G}^{0}_{2}\right)+
		d^{1-\alpha_2}_{2,0}A \left(\frac{2}{3}\bar{G}^{1}_{2}+\frac{1}{3}\bar{G}^{0}_{2}\right)=ad^{1-\alpha_1}_{2,0}\left(\frac{2}{3}\bar{G}^{1}_{1}+\frac{1}{3}\bar{G}^{0}_{1}\right)\quad {\rm in}~\Omega,\\
		&\frac{1}{\tau}\left(\frac{3}{2}\bar{G}^n_1-2\bar{G}^{n-1}_1+\frac{1}{2}\bar{G}^{n-2}_1\right)+a\sum_{i=0}^{n-1}d^{1-\alpha_1}_{2,i}\bar{G}^{n-i}_{1}+a\frac{1}{2}d^{1-\alpha_1}_{2,n-1}\bar{G}^{0}_{1}\\
		&\qquad
		+\sum_{i=0}^{n-1}d^{1-\alpha_1}_{2,i}A \bar{G}^{n-i}_{1}+\frac{1}{2}d^{1-\alpha_1}_{2,n-1}A \bar{G}^{0}_{1}=a\sum_{i=0}^{n-1}d^{1-\alpha_2}_{2,i}\bar{G}^{n-i}_{2}+\frac{1}{2}ad^{1-\alpha_2}_{2,n-1}\bar{G}^{0}_{2}\qquad\quad\qquad\quad\ \: {\rm in}~\Omega,~ n\geq 2,\\
		&\frac{1}{\tau}\left(\frac{3}{2}\bar{G}^n_2-2\bar{G}^{n-1}_2+\frac{1}{2}\bar{G}^{n-2}_2\right)+a\sum_{i=0}^{n-1}d^{1-\alpha_2}_{2,i}\bar{G}^{n-i}_{2}+a\frac{1}{2}d^{1-\alpha_2}_{2,n-1}\bar{G}^{0}_{2}\\
		&\qquad+\sum_{i=0}^{n-1}d^{1-\alpha_2}_{2,i}A \bar{G}^{n-i}_{2}+\frac{1}{2}d^{1-\alpha_2}_{2,n-1}A \bar{G}^{0}_{2}=a\sum_{i=0}^{n-1}d^{1-\alpha_1}_{2,i}\bar{G}^{n-i}_{1}+\frac{1}{2}ad^{1-\alpha_1}_{2,n-1}\bar{G}^{0}_{1}\qquad\quad\qquad\quad\: \: {\rm in}~\Omega,~ n\geq 2,\\
		&\bar{G}^0_{1}=G_{1}(0),\quad\bar{G}^0_{2}=G_{2}(0)\qquad\qquad\qquad\qquad\quad\qquad\qquad\qquad\qquad\qquad\qquad\qquad\qquad\quad\ \ {\rm in}~\Omega,\\
		&\bar{G}^n_{1}=\bar{G}^n_{2}=0\qquad\qquad\qquad\qquad\qquad\qquad\qquad\qquad\qquad\qquad\qquad\qquad\qquad\qquad\quad\qquad\quad\   {\rm on}~\partial\Omega,~n\geq 0,
	\end{aligned}\right .
\end{equation}
where $\bar{G}^n_1$, $\bar{G}^n_2$ are the numerical solutions of $G_1$, $G_2$ at $t_n$. Similarly using Eq. \eqref{equrelationO2}, and noting that $\epsilon^{\alpha}_{2,0}=\epsilon^{\alpha}_{2,1}=\epsilon^{\alpha}_{2,2}=0$, we obtain the fast SBD scheme
\begin{equation}\label{equO2fastdis}
	\left \{\begin{aligned}
		&\frac{G^1_{1}-G^{0}_{1}}{\tau}+ad^{1-\alpha_1}_{2,0}\left(\frac{2}{3}G^{1}_{1}+\frac{1}{3}G^{0}_{1}\right)+
		d^{1-\alpha_1}_{2,0}A \left(\frac{2}{3}G^{1}_{1}+\frac{1}{3}G^{0}_{1}\right)=ad^{1-\alpha_2}_{2,0}\left(\frac{2}{3}G^{1}_{2}+\frac{1}{3}G^{0}_{2}\right)\quad {\rm in}~\Omega,\\
		&\frac{G^1_{2}-G^{0}_{2}}{\tau}+ad^{1-\alpha_2}_{2,0}\left(\frac{2}{3}G^{1}_{2}+\frac{1}{3}G^{0}_{2}\right)+
		d^{1-\alpha_2}_{2,0}A \left(\frac{2}{3}G^{1}_{2}+\frac{1}{3}G^{0}_{2}\right)=ad^{1-\alpha_1}_{2,0}\left(\frac{2}{3}G^{1}_{1}+\frac{1}{3}G^{0}_{1}\right)\quad {\rm in}~\Omega,\\
		&\frac{1}{\tau}\left(\frac{3}{2}G^n_1-2G^{n-1}_1+\frac{1}{2}G^{n-2}_1\right)+a\sum_{i=0}^{n-1}d^{1-\alpha_1}_{2,i}G^{n-i}_{1}+\frac{1}{2}ad^{1-\alpha_1}_{2,n-1}G^0_1\\
		&\qquad+
		\sum_{i=0}^{n-1}d^{1-\alpha_1}_{2,i}A G^{n-i}_{1}+\frac{1}{2}d^{1-\alpha_1}_{2,n-1}AG^0_1=a\sum_{i=0}^{n-1}d^{1-\alpha_2}_{2,i}G^{n-i}_{2}+\frac{1}{2}ad^{1-\alpha_2}_{2,n-1}G^0_2\\
		&\qquad+\sum_{i=0}^{n-1}\epsilon^{1-\alpha_1}_{2,i}(aG^{n-i}_1+AG^{n-i}_1)-\sum_{i=0}^{n-1}\epsilon^{1-\alpha_2}_{2,i}aG^{n-i}_2\qquad\qquad\qquad\qquad\qquad\quad\qquad\quad {\rm in}~\Omega,~ n\geq 2,\\
		&\frac{1}{\tau}\left(\frac{3}{2}G^n_2-2G^{n-1}_2+\frac{1}{2}G^{n-2}_2\right)+a\sum_{i=0}^{n-1}d^{1-\alpha_2}_{2,i}G^{n-i}_{2}+\frac{1}{2}ad^{1-\alpha_2}_{2,n-1}G^0_2,\\
		&\qquad+
		\sum_{i=0}^{n-1}d^{1-\alpha_2}_{2,i}A G^{n-i}_{2}+\frac{1}{2}d^{1-\alpha_2}_{2,n-1}AG^0_2=a\sum_{i=0}^{n-1}d^{1-\alpha_1}_{2,i}G^{n-i}_{1}+\frac{1}{2}ad^{1-\alpha_1}_{2,n-1}G^0_1\\
		&\qquad+\sum_{i=0}^{n-1}\epsilon^{1-\alpha_2}_{2,i}(aG^{n-i}_2+AG^{n-i}_2)-\sum_{i=0}^{n-1}\epsilon^{1-\alpha_1}_{2,i}aG^{n-i}_1\qquad\qquad\qquad\qquad\qquad\quad\qquad\quad {\rm in}~\Omega,~ n\geq 2,\\
		&G^0_{1}=G_{1}(0),\quad G^0_{2}=G_{2}(0)\qquad\qquad\qquad\qquad\quad\qquad\qquad\qquad\qquad\qquad\qquad\qquad\qquad\   {\rm in}~\Omega,\\
		&G^n_{1}=G^n_{2}=0\qquad\qquad\qquad\qquad\qquad\qquad\qquad\qquad\qquad\qquad\qquad\qquad\qquad\qquad\qquad\quad  {\rm on}~\partial\Omega,~n\geq 0,
	\end{aligned}\right.
\end{equation}
where $G^n_1$, $G^n_2$ are the numerical solutions of $G_1$, $G_2$ at time $t_n$.
\begin{remark}

To keep the accuracy of SBD scheme \eqref{equO2tradis}, the discretization of $\,_0D^\alpha_tG(t)$ should be modified and correspondingly the definition of history parts in \eqref{equdefO2historyterm} should be changed as
	\begin{equation*}
		\mathcal{G}^{2,1}_{hist,j}(t_n)=w^{\alpha}_{1,j}\sum_{i=N_s}^{n-1}\bar{d}^{1}_{2,i}(s^{\alpha}_{1,j})G(t_{n-i}),\qquad \mathcal{G}^{2,2}_{hist,j}(t_n)=w^{\alpha}_{2,j}\sum_{i=N_s}^{n-1}\bar{d}^{2}_{2,i}(s^{\alpha}_{2,j})G(t_{n-i}).
	\end{equation*}
\end{remark}

Next, we provide the error estimates between \eqref{equrqtosol} and \eqref{equO2tradis}.
\begin{theorem}\label{thmO2CQerr}
	Let $G_{1}$, $G_{2}$ and $\bar{G}^n_{1}$, $\bar{G}^n_{2}$ be, respectively, the solutions of the systems \eqref{equrqtosol} and \eqref{equO2tradis}. Then
	\begin{equation*}
		\begin{aligned}
			\|G_{1}(t_n)-\bar{G}^n_{1}\|_{L^2(\Omega)}\leq C\tau^2\left(t_n^{-2}\|G_{1}(0)\|_{L^2(\Omega)}+t_n^{\alpha_2-2}\|G_{2}(0)\|_{L^2(\Omega)}\right),\\
			\|G_{2}(t_n)-\bar{G}^n_{2}\|_{L^2(\Omega)}\leq C\tau^2\left(t_n^{\alpha_1-2}\|G_{1}(0)\|_{L^2(\Omega)}+t_n^{-2}\|G_{2}(0)\|_{L^2(\Omega)}\right).
		\end{aligned}
	\end{equation*}
\end{theorem}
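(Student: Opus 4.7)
The plan is to follow the Laplace-transform/contour-integral approach for convolution quadrature, which is essentially the same strategy already used (implicitly) in Theorems \ref{thmO1trareg}--\ref{thmerrortraadnfast}, adapted now to the SBD scheme with its initial corrections. First, I would represent the exact solutions $G_1(t_n), G_2(t_n)$ by inverse Laplace transform applied to the formulas for $\tilde G_1,\tilde G_2$ in the excerpt, deforming the Bromwich contour to a Hankel-type contour $\Gamma_{\theta,\kappa}$ with $\kappa \asymp 1/t_n$ satisfying the hypotheses of Lemmas \ref{lemtheestimateofHblabla} and \ref{lemaAH}.

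Second, I would derive an analogous contour representation for $\bar G_1^n,\bar G_2^n$. Following the pattern of Theorem \ref{thmO1trareg}, multiply each equation of \eqref{equO2tradis} by $\zeta^n$ and sum. The correction terms $\tfrac12 d^{1-\alpha_i}_{2,n-1}$ together with the initial pair of equations are designed exactly so that, using the identity $(0,3/2,1,1,\ldots)=\bar\partial_\tau\partial_t^{-1}1$ and $\delta_2(\zeta)=((1-\zeta)+(1-\zeta)^2/2)/\tau$, the generating functions $\sum_n \bar G_i^n \zeta^n$ coincide with the continuous Laplace-domain formulas after the substitution $z\mapsto\delta_2(\zeta)$, up to a factor $\mu(\zeta)=\bar\partial_\tau\partial_t^{-1}(1)$ whose symbol approximates $1$ to order $\tau^2$. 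Substituting $\zeta=e^{-z\tau}$ and deforming to the truncated contour $\Gamma_{\theta,\kappa}^{\tau}$ used in Theorem \ref{thmO1trareg} yields a representation of $\bar G_i^n$ structurally identical to that of $G_i(t_n)$, but with $z$ replaced by $\delta_2(e^{-z\tau})$ and an extra $\mu(e^{-z\tau})$ factor.

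Third, I would subtract the two representations and group the integrand using standard telescoping
\begin{equation*}
K(\delta_2(e^{-z\tau}))\delta_2(e^{-z\tau})^{\alpha-1}\mu(e^{-z\tau}) - K(z)z^{\alpha-1}
\end{equation*}
with $K\in\{H,H_{\alpha_1},H_{\alpha_2}\}$. Using the second-order consistency $\delta_2(e^{-z\tau}) = z + O(\tau^2|z|^3)$ and $\mu(e^{-z\tau})=1+O(\tau^2|z|^2)$ on $\Gamma_{\theta,\kappa}^\tau$, together with the resolvent estimates of Lemma \ref{lemtheestimateofHblabla}, each difference produces a factor $O(\tau^2|z|^{2})$ times the appropriate symbol bound. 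For instance, the $G_1(0)$-contribution in the bound for $G_1(t_n)-\bar G_1^n$ is controlled by $C\tau^2 \int_{\Gamma_{\theta,\kappa}} e^{\Re(z)t_n} |z|\,|dz|$, and the $G_2(0)$-contribution by $C\tau^2 \int_{\Gamma_{\theta,\kappa}} e^{\Re(z)t_n} |z|^{1-\alpha_2}\,|dz|$.

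Finally, the standard contour estimate $\int_{\Gamma_{\theta,\kappa}} e^{\Re(z)t_n}|z|^{-\beta}|dz| \le C t_n^{\beta-1}$ (applied after choosing $\kappa=1/t_n$) converts these into $C\tau^2 t_n^{-2}$ and $C\tau^2 t_n^{\alpha_2-2}$ respectively, giving the first asserted bound; the second follows by symmetry in $(G_1,G_2,\alpha_1,\alpha_2)$. The main obstacle is the second step: carefully verifying that the initial corrections and the $\tfrac12 d^{1-\alpha_i}_{2,n-1}$ terms in \eqref{equO2tradis} conspire, at the generating-function level, to produce the symbol $\mu(e^{-z\tau})$ with the $O(\tau^2)$ defect required for second-order accuracy with nonsmooth data; once this algebraic identification is in place the contour estimates are routine and mirror those of Theorem \ref{thmerrortraadnfast}.
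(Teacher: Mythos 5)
Your outline is essentially the paper's proof: both rest on contour representations of the exact and discrete solutions, the consistency relations $\delta(e^{-z\tau})=z+O(\tau^2|z|^3)$ and $\mu(e^{-z\tau})=1+O(\tau^2|z|^2)$, the resolvent bounds of Lemmas \ref{lemtheestimateofHblabla} and \ref{lemaAH}, and the standard Hankel-contour estimates with $\kappa\asymp 1/t_n$ (including the three-way split into the truncated tail, the $z\mapsto\delta(e^{-z\tau})$ perturbation, and the $1-\mu(e^{-z\tau})$ defect). The one substantive difference sits exactly at the step you flag as the main obstacle. The paper does not compare $G$ and $\bar G^n$ directly; it first sets $V=G-G(0)$, $\bar V^n=\bar G^n-G(0)$, so that $\bar V^{0}=\bar V^{-1}=0$, the BDF2 operator's generating function is exactly $\delta(\zeta)\sum_n\bar V^n\zeta^n$, and the correction weights $\sum_{i=0}^{n-1}d^{1-\alpha}_{2,i}+\tfrac12 d^{1-\alpha}_{2,n-1}$ acting on the initial data sum to $(\delta(\zeta))^{1-\alpha}\nu(\zeta)$ with $\nu(\zeta)=\zeta(3-\zeta)/(2(1-\zeta))$ and $\mu(\zeta)=\tau\delta(\zeta)\nu(\zeta)$. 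That substitution is precisely the device that produces the clean symbol $\mu$ you want, and it is also why Lemma \ref{lemaAH} enters: the symbols for $V$ carry explicit $(a+A)$ factors such as $H_{\alpha_1}(z)z^{-1}(a+A)$. If you instead compare $G$ directly, your claimed identity is not quite exact: the $G_1(0)$-part of $\sum_n\bar G_1^n\zeta^n$ works out to $\tau^{-1}H_{\alpha_1}(\delta(\zeta))(\delta(\zeta))^{\alpha_1-1}\mu(\zeta)G_1(0)-\tfrac{\zeta}{2}G_1(0)$, i.e.\ there is an extra polynomial term coming from the mismatch between $\zeta/(1-\zeta)$ and $\nu(\zeta)$; it only affects $n=1$, where the asserted bound $\tau^2t_1^{-2}=1$ is trivial, so it is harmless, but it must be acknowledged. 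With either bookkeeping the remaining contour estimates are as you describe and yield the stated rates $\tau^2t_n^{-2}$ and $\tau^2t_n^{\alpha_2-2}$ (and their mirror images for $G_2$).
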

\begin{proof}
	Here we take $\kappa\geq 1/t$ for given $t$ and ensure that $\kappa$ is large enough to satisfy the conditions in Lemmas \ref{lemtheestimateofHblabla} and  \ref{lemaAH}. Introduce $V_1(t)=G_1(t)-G_1(0)$, $V_2(t)=G_2(t)-G_2(0)$, $\bar{V}^n_1=\bar{G}^n_1-G_1(0)$, $\bar{V}^n_2=\bar{G}^n_2-G_2(0)$; and denote $\mathbf{V}=[V_1, V_2]^T$. Thus \eqref{equrqtosol} can be written as
	\begin{equation}\label{equO2retosol}
		\left \{
		\begin{aligned}
			&\frac{\partial V_1}{\partial t}+a~_0D^{1-\alpha_1}_tV_1+~_0D^{1-\alpha_1}_tA V_1-a~_0D^{1-\alpha_2}_tV_2=\\
			&\qquad\qquad -a~_0D^{1-\alpha_1}_tG_1(0)-~_0D^{1-\alpha_1}_tA G_1(0)+a~_0D^{1-\alpha_2}_tG_2(0) \quad\quad\quad\,\, {\rm in}\ \Omega,\ t\in[0,T],\\
			&\frac{\partial V_2}{\partial t}+a~_0D^{1-\alpha_2}_tV_2+~_0D^{1-\alpha_2}_tA V_2-a~_0D^{1-\alpha_1}_tV_1=\\
			&\qquad\qquad -a~_0D^{1-\alpha_2}_tG_2(0)-~_0D^{1-\alpha_2}_tA G_2(0)+a~_0D^{1-\alpha_1}_tG_1(0)  \quad\quad\,\,\quad {\rm in}\ \Omega,\ t\in[0,T],\\
			&\mathbf{V}(\cdot,0)=0 ~~~~~~~~~~~~~~~\quad\quad\quad\quad\quad\quad\quad\quad\quad\quad\quad\quad\quad\quad\quad\quad\quad\quad\quad\quad\quad\quad\quad\quad\quad\quad {\rm in}\ \Omega,\\
			&\mathbf{V}=0 ~~\quad\quad\quad\quad\quad\quad\quad\quad\quad\quad\quad\quad\quad\quad\quad\quad\quad\quad\quad\quad\quad\quad\quad\quad\quad\quad\quad\quad {\rm on}\ \partial\Omega,\ t\in[0,T].
		\end{aligned}
		\right .
	\end{equation}
	Therefore we get the solutions of system \eqref{equO2retosol} as
	\begin{equation}\label{equdefVii}
		\begin{aligned}
			\tilde{V}_1=&H_{\alpha_1}(z)z^{\alpha_1-1}(-az^{-\alpha_1}G_1(0)-z^{-\alpha_1}AG_1(0))+H_{\alpha_1}(z)z^{\alpha_1-1}(az^{-\alpha_2}G_2(0))\\
			&+aH(z)z^{\alpha_1-1}(-az^{-\alpha_2}G_2(0)-z^{-\alpha_2}AG_2(0))+aH(z)z^{\alpha_1-1}(az^{-\alpha_1}G_1(0))\\
			=&\tilde{V}_{1,1}+\tilde{V}_{1,2}+\tilde{V}_{1,3}+\tilde{V}_{1,4},\\
			\tilde{V}_2=&aH(z)z^{\alpha_2-1}(-az^{-\alpha_1}G_1(0)-z^{-\alpha_1}AG_1(0))+aH(z)z^{\alpha_2-1}(az^{-\alpha_2}G_2(0))\\
			&+H_{\alpha_2}(z)z^{\alpha_2-1}(-az^{-\alpha_2}G_2(0)-z^{-\alpha_2}AG_2(0))+H_{\alpha_2}(z)z^{\alpha_2-1}(az^{-\alpha_1}G_1(0))\\
			=&\tilde{V}_{2,1}+\tilde{V}_{2,2}+\tilde{V}_{2,3}+\tilde{V}_{2,4},\\
		\end{aligned}
	\end{equation}
where `$\tilde{~}$' stands for taking Laplace transform.
Let $\bar{V}^{-1}_1=\bar{V}^{-1}_2=0$, and then \eqref{equO2tradis} can be rewritten as
	\begin{equation}\label{equO2tradis2}
		\left \{\begin{aligned}
			&\frac{1}{\tau}\left(\frac{3}{2}\bar{V}^n_1-2\bar{V}^{n-1}_1+\frac{1}{2}\bar{V}^{n-2}_1\right)+a\sum_{i=0}^{n-1}d^{1-\alpha_1}_{2,i}\bar{V}^{n-i}_{1}
			+\sum_{i=0}^{n-1}d^{1-\alpha_1}_{2,i}A \bar{V}^{n-i}_{1}-a\sum_{i=0}^{n-1}d^{1-\alpha_2}_{2,i}\bar{V}^{n-i}_{2}\\
			&\quad=-(a+A)\left (\sum_{i=0}^{n-1}d^{1-\alpha_1}_{2,i}+\frac{1}{2}d^{1-\alpha_1}_{2,n-1}\right )G_{1}(0)+a\left (\sum_{i=0}^{n-1}d^{1-\alpha_2}_{2,i}+\frac{1}{2}d^{1-\alpha_2}_{2,n-1}\right )G_{2}(0)\quad {\rm in}~\Omega,~n\geq 1,\\
			&\frac{1}{\tau}\left(\frac{3}{2}\bar{V}^n_2-2\bar{V}^{n-1}_2+\frac{1}{2}\bar{V}^{n-2}_2\right)+a\sum_{i=0}^{n-1}d^{1-\alpha_2}_{2,i}\bar{V}^{n-i}_{2}+\sum_{i=0}^{n-1}d^{1-\alpha_2}_{2,i}A \bar{V}^{n-i}_{2}-a\sum_{i=0}^{n-1}d^{1-\alpha_1}_{2,i}\bar{V}^{n-i}_{1}\\
			&\quad =-(a+A)\left (\sum_{i=0}^{n-1}d^{1-\alpha_2}_{2,i}+\frac{1}{2}d^{1-\alpha_2}_{2,n-1}\right )G_{2}(0)+a\left (\sum_{i=0}^{n-1}d^{1-\alpha_1}_{2,i}+\frac{1}{2}d^{1-\alpha_1}_{2,n-1}\right )G_{1}(0)\quad {\rm in}~\Omega,~n\geq 1,\\
			&\bar{V}^0_{1}=\bar{V}^0_{2}=0\qquad\qquad\qquad\qquad\qquad\qquad\qquad\qquad\qquad\qquad\qquad\qquad\qquad\qquad\qquad {\rm in}~\Omega,\\
			&\bar{V}^n_{1}=\bar{V}^n_{2}=0\qquad\qquad\qquad\qquad\qquad\qquad\qquad\qquad\qquad\qquad\qquad\qquad\qquad\qquad\quad\ \ {\rm on}~\partial\Omega,~n\geq 0.\\
		\end{aligned}\right .
	\end{equation}
	Multiplying $\zeta^n$ and summing from $1$ to $\infty$ for both sides of the first two formulas in \eqref{equO2tradis2}, we obtain
	\begin{equation*}
		\left .\begin{aligned}
			&\sum_{n=1}^{\infty}\frac{1}{\tau}\left(\frac{3}{2}\bar{V}^n_1-2\bar{V}^{n-1}_1+\frac{1}{2}\bar{V}^{n-2}_1\right)\zeta^n+a\sum_{n=1}^{\infty}\sum_{i=0}^{n-1}d^{1-\alpha_1}_{2,i}\bar{V}^{n-i}_{1}\zeta^n
			\\
			&+\sum_{n=1}^{\infty}\sum_{i=0}^{n-1}d^{1-\alpha_1}_{2,i}A \bar{V}^{n-i}_{1}\zeta^n-a\sum_{n=1}^{\infty}\sum_{i=0}^{n-1}d^{1-\alpha_2}_{2,i}\bar{V}^{n-i}_{2}\zeta^n=\\
			&\qquad-(a+A)\sum_{n=1}^{\infty}\left (\sum_{i=0}^{n-1}d^{1-\alpha_1}_{2,i}+\frac{1}{2}d^{1-\alpha_1}_{2,n-1}\right )G_{1}(0)\zeta^n+a\sum_{n=1}^{\infty}\left (\sum_{i=0}^{n-1}d^{1-\alpha_2}_{2,i}+\frac{1}{2}d^{1-\alpha_2}_{2,n-1}\right )G_{2}(0)\zeta^n,\\
			&\sum_{n=1}^{\infty}\frac{1}{\tau}\left(\frac{3}{2}\bar{V}^n_2-2\bar{V}^{n-1}_2+\frac{1}{2}\bar{V}^{n-2}_2\right)\zeta^n+a\sum_{n=1}^{\infty}\sum_{i=0}^{n-1}d^{1-\alpha_2}_{2,i}\bar{V}^{n-i}_{2}\zeta^n\\
			&+\sum_{n=1}^{\infty}\sum_{i=0}^{n-1}d^{1-\alpha_2}_{2,i}A \bar{V}^{n-i}_{2}\zeta^n- a\sum_{n=1}^{\infty}\sum_{i=0}^{n-1}d^{1-\alpha_1}_{2,i}\bar{V}^{n-i}_{1}\zeta^n=\\
			&\qquad-(a+A)\sum_{n=1}^{\infty}\left (\sum_{i=0}^{n-1}d^{1-\alpha_2}_{2,i}+\frac{1}{2}d^{1-\alpha_2}_{2,n-1}\right )G_{2}(0)\zeta^n+a\sum_{n=1}^{\infty}\left (\sum_{i=0}^{n-1}d^{1-\alpha_1}_{2,i}+\frac{1}{2}d^{1-\alpha_1}_{2,n-1}\right )G_{1}(0)\zeta^n.
		\end{aligned}\right .
	\end{equation*}
	Using Eq. \eqref{equdefofdj2} and the facts
	\begin{equation*}
		\begin{aligned}
			\zeta\left (\frac{3}{2}+\sum\limits_{n=1}^{\infty}\zeta^n\right )=&\left (\frac{3}{2}-\frac{\zeta}{2}\right )\sum\limits_{n=1}^{\infty}\zeta^n\\
			=&\left (\frac{3-\zeta}{2(1-\zeta)}\right )\zeta=:\nu(\zeta)
		\end{aligned}
	\end{equation*}
	and $\delta(\zeta)=\frac{(1-\zeta)+(1-\zeta)^2/2}{\tau}$, we obtain
	\begin{equation*}
		\begin{aligned}
			&\delta(\zeta)\sum_{n=1}^{\infty}\bar{V}^{n}_1\zeta^n+a\left (\delta(\zeta)\right )^{1-\alpha_1}\sum_{n=1}^{\infty}\bar{V}^{n}_{1}\zeta^n+\left (\delta(\zeta)\right )^{1-\alpha_1}\sum_{n=1}^{\infty}A \bar{V}^{n}_{1}\zeta^n\\
			&\qquad-a\left (\delta(\zeta)\right )^{1-\alpha_2}\sum_{n=1}^{\infty}\bar{V}^{n}_{2}\zeta^n=-(a+A)\left (\delta(\zeta)\right )^{1-\alpha_1}\nu(\zeta)G_{1}(0)+a\left (\delta(\zeta)\right )^{1-\alpha_2}\nu(\zeta)G_{2}(0),\\
			&\delta(\zeta)\sum_{n=1}^{\infty}\bar{V}^{n}_2\zeta^n+a\left (\delta(\zeta)\right )^{1-\alpha_2}\sum_{n=1}^{\infty}\bar{V}^{n}_{2}\zeta^n+\left (\delta(\zeta)\right )^{1-\alpha_2}\sum_{n=1}^{\infty}A \bar{V}^{n}_{2}\zeta^n\\
			&\qquad-a\left (\delta(\zeta)\right )^{1-\alpha_1}\sum_{n=1}^{\infty}\bar{V}^{n}_{1}\zeta^n=-(a+A)\left (\delta(\zeta)\right )^{1-\alpha_2}\nu(\zeta)G_{2}(0)+a\left (\delta(\zeta)\right )^{1-\alpha_1}\nu(\zeta)G_{1}(0).
		\end{aligned}
	\end{equation*}
	Thus, after simple calculations, we have
	\begin{equation}\label{equdefbarVii}
		\begin{aligned}
			\sum_{n=1}^{\infty}\bar{V}^{n}_1\zeta^n=&H_{\alpha_1}(\delta(\zeta))(\delta(\zeta))^{\alpha_1-1}(-(a+A)(\delta(\zeta))^{1-\alpha_1}\nu(\zeta)G_1(0))\\
			&+H_{\alpha_1}(\delta(\zeta))(\delta(\zeta))^{\alpha_1-1}a(\delta(\zeta))^{1-\alpha_2}\nu(\zeta)G_2(0)\\
			&+aH(\delta(\zeta))(\delta(\zeta))^{\alpha_1-1}(-(a+A)(\delta(\zeta))^{1-\alpha_2}\nu(\zeta)G_2(0))\\
			&+aH(\delta(\zeta))(\delta(\zeta))^{\alpha_1-1}a(\delta(\zeta))^{1-\alpha_1}\nu(\zeta)G_1(0)\\
			=&\sum_{n=1}^{\infty}\bar{V}^{n}_{1,1}\zeta^n+\sum_{n=1}^{\infty}\bar{V}^{n}_{1,2}\zeta^n+\sum_{n=1}^{\infty}\bar{V}^{n}_{1,3}\zeta^n+\sum_{n=1}^{\infty}\bar{V}^{n}_{1,4}\zeta^n,\\
			\sum_{n=1}^{\infty}\bar{V}^{n}_2\zeta^n=&aH(\delta(\zeta))(\delta(\zeta))^{\alpha_2-1}(-(a+A)(\delta(\zeta))^{1-\alpha_1}\nu(\zeta)G_1(0))\\
			&+aH(\delta(\zeta))(\delta(\zeta))^{\alpha_2-1}a(\delta(\zeta))^{1-\alpha_2}\nu(\zeta)G_2(0)\\
			&+H_{\alpha_2}(\delta(\zeta))(\delta(\zeta))^{\alpha_2-1}(-(a+A)(\delta(\zeta))^{1-\alpha_2}\nu(\zeta)G_2(0))\\
			&+H_{\alpha_2}(\delta(\zeta))(\delta(\zeta))^{\alpha_2-1}a(\delta(\zeta))^{1-\alpha_1}\nu(\zeta)G_1(0)\\
			=&\sum_{n=1}^{\infty}\bar{V}^{n}_{2,1}\zeta^n+\sum_{n=1}^{\infty}\bar{V}^{n}_{2,2}\zeta^n+\sum_{n=1}^{\infty}\bar{V}^{n}_{2,3}\zeta^n+\sum_{n=1}^{\infty}\bar{V}^{n}_{2,4}\zeta^n.
		\end{aligned}
	\end{equation}
	To get error estimate between $V_1(t_n)$ and $\bar{V}^n_1$, we need to get the error estimates between $V_{1,i}(t_n)$ and $\bar{V}^n_{1,i}$~$(i=1,2,3,4)$. Then we consider the error estimate between $V_{1,1}(t_n)$ and $\bar{V}^n_{1,1}$. Using Eq. \eqref{equdefbarVii} and denoting $\mu(\zeta)$ as $\mu(\zeta)=\tau\delta(\zeta)\nu(\zeta)=\frac{\zeta(3-\zeta)^2}{4}$, when $\xi_\tau=e^{-(\kappa+1)\tau}$,  we have
	\begin{equation*}
		\bar{V}^n_{1,1}=\frac{1}{2\pi\tau \mathbf{i}}\int_{|\zeta|=\xi_\tau}\zeta^{-n-1}H_{\alpha_1}(\delta(\zeta))(\delta(\zeta))^{\alpha_1-1}(-(a+A))(\delta(\zeta))^{-\alpha_1}\mu(\zeta)d\zeta G_1(0).
	\end{equation*}
	Taking $\zeta=e^{-z\tau}$, we obtain
	\begin{equation*}
		\bar{V}^n_{1,1}=\frac{1}{2\pi \mathbf{i}}\int_{\Gamma^\tau}e^{zt_n}H_{\alpha_1}(\delta(e^{-z\tau}))(\delta(e^{-z\tau}))^{\alpha_1-1}(-(a+A))(\delta(e^{-z\tau}))^{-\alpha_1}\mu(e^{-z\tau})dz G_1(0),
	\end{equation*}
	where $\Gamma^\tau=\{z=\kappa+1+\mathbf{i}y:y\in\mathbb{R}~{\rm and}~|y|\leq \pi/\tau\}$. Next we deform the contour $\Gamma^\tau$ to
	$\Gamma^\tau_{\theta,\kappa}=\{z\in \mathbb{C}:\kappa\leq |z|\leq\frac{\pi}{\tau\sin(\theta)},|\arg z|=\theta\}\bigcup\{z\in \mathbb{C}:|z|=\kappa,|\arg z|\leq\theta\}$. Thus
	\begin{equation}\label{equdefV11}
		\bar{V}^n_{1,1}=\frac{1}{2\pi \mathbf{i}}\int_{\Gamma^\tau_{\theta,\kappa}}e^{zt_n}H_{\alpha_1}(\delta(e^{-z\tau}))(\delta(e^{-z\tau}))^{\alpha_1-1}(-(a+A))(\delta(e^{-z\tau}))^{-\alpha_1}\mu(e^{-z\tau})dz G_1(0).
	\end{equation}
	Taking the inverse Laplace transform for \eqref{equdefVii}, and combining \eqref{equdefV11}, we obtain
	\begin{equation*}
		\begin{aligned}
			&V_{1,1}(t_n)-\bar{V}^{n}_{1,1}\\
			=&\frac{1}{2\pi \mathbf{i}}\int_{\Gamma_{\theta,\kappa}}e^{zt_n}H_{\alpha_1}(z)z^{\alpha_1-1}(-(a+A))z^{-\alpha_1}dz G_1(0)\\
			&-\frac{1}{2\pi \mathbf{i}}\int_{\Gamma^\tau_{\theta,\kappa}}e^{zt_n}H_{\alpha_1}(\delta(e^{-z\tau}))(\delta(e^{-z\tau}))^{\alpha_1-1}(-(a+A))(\delta(e^{-z\tau}))^{-\alpha_1}\mu(e^{-z\tau})dz G_1(0)\\
			=&\frac{1}{2\pi \mathbf{i}}\int_{\Gamma_{\theta,\kappa}\backslash\Gamma^\tau_{\theta,\kappa}}e^{zt_n}H_{\alpha_1}(z)z^{-1}(-(a+A))dz G_1(0)\\
			&+\frac{1}{2\pi \mathbf{i}}\int_{\Gamma^\tau_{\theta,\kappa}}e^{zt_n}\left (H_{\alpha_1}(z)z^{-1}(-(a+A))-H_{\alpha_1}(\delta(e^{-z\tau}))(\delta(e^{-z\tau}))^{-1}(-(a+A))\right )dz G_1(0)\\
			&+\frac{1}{2\pi \mathbf{i}}\int_{\Gamma^\tau_{\theta,\kappa}}e^{zt_n}H_{\alpha_1}(\delta(e^{-z\tau}))(\delta(e^{-z\tau}))^{-1}(-(a+A))(1-\mu(e^{-z\tau}))dz G_1(0)\\
			=&\uppercase\expandafter{\romannumeral1}+\uppercase\expandafter{\romannumeral2}+\uppercase\expandafter{\romannumeral3}.
		\end{aligned}
	\end{equation*}
	According to Lemma \ref{lemaAH}, we have
	\begin{equation*}
		\|\uppercase\expandafter{\romannumeral1}\|_{L^2(\Omega)}\leq C\int_{\Gamma_{\theta,\kappa}\backslash\Gamma^{\tau}_{\theta,\kappa}}e^{-C|z|t_n}|z|^{-1}|dz|\|G_1(0)\|_{L^2(\Omega)}\leq C\tau^2 t_n^{-2}\|G_1(0)\|_{L^2(\Omega)}.
	\end{equation*}
	Using $\|\frac{d}{dz}H_{\alpha_1}(z)z^{-1}(-(a+A))\|\leq C|z|^{-2}$, $\delta(e^{-z\tau})=z+\mathcal{O}(\tau^2z^3)$ and the mean value theorem, we obtain
	\begin{equation*}
		\|\uppercase\expandafter{\romannumeral2}\|_{L^2(\Omega)}\leq C\tau^2\int_{\Gamma^\tau_{\theta,\kappa}}e^{\Re(z)t_n}|z||dz|\|G_1(0)\|_{L^2(\Omega)}\leq C\tau^2 t_n^{-2}\|G_1(0)\|_{L^2(\Omega)}.
	\end{equation*}
Combining the fact $\mu(e^{-z\tau})=1+O(z^2\tau^2)$ \cite{Lubich1996,Jin2017} and Lemmas \ref{lemaAH} and \ref{lemdelta} leads to
	\begin{equation*}
		\|\uppercase\expandafter{\romannumeral3}\|_{L^2(\Omega)}\leq C\tau^2\int_{\Gamma^\tau_{\theta,\kappa}}e^{\Re(z)t_n}|z||dz|\|G_1(0)\|_{L^2(\Omega)}\leq C\tau^2 t_n^{-2}\|G_1(0)\|_{L^2(\Omega)}.
	\end{equation*}
	Thus
	\begin{equation*}
		\|V_{1,1}(t_n)-\bar{V}^n_{1,1}\|_{L^2(\Omega)}\leq C\tau^2 t_n^{-2}\|G_1(0)\|_{L^2(\Omega)}.
	\end{equation*}
	Similarly, we can get
	\begin{equation*}
		\begin{aligned}
			&\|V_{1,2}(t_n)-\bar{V}^n_{1,2}\|_{L^2(\Omega)}\leq C\tau^2 t_n^{\alpha_2-2}\|G_2(0)\|_{L^2(\Omega)},\\
			&\|V_{1,3}(t_n)-\bar{V}^n_{1,3}\|_{L^2(\Omega)}\leq C\tau^2 t_n^{\alpha_2-2}\|G_2(0)\|_{L^2(\Omega)},\\
			&\|V_{1,4}(t_n)-\bar{V}^n_{1,4}\|_{L^2(\Omega)}\leq C\tau^2 t_n^{\alpha_1+\alpha_2-2}\|G_1(0)\|_{L^2(\Omega)}.\\
		\end{aligned}
	\end{equation*}
	Using the fact $T/t>1$, we have
	\begin{equation*}
		\|V_1(t_n)-\bar{V}^n_1\|_{L^2(\Omega)}\leq C\tau^2\left( t_n^{-2}\|G_1(0)\|_{L^2(\Omega)}+ t_n^{\alpha_2-2}\|G_2(0)\|_{L^2(\Omega)}\right).
	\end{equation*}
	Similarly, there is
	\begin{equation*}
		\|V_2(t_n)-\bar{V}^n_2\|_{L^2(\Omega)}\leq C\tau^2\left( t_n^{\alpha_1-2}\|G_1(0)\|_{L^2(\Omega)}+ t_n^{-2}\|G_2(0)\|_{L^2(\Omega)}\right).
	\end{equation*}
	Thus, the proof is completed.
\end{proof}
Then we have the following regularity estimates of the solutions of system \eqref{equO2tradis}.
\begin{theorem}\label{thmO2trareg}
	Let $\bar{G}^n_{1}$, $\bar{G}^n_{2}$ be the solutions of the systems \eqref{equO2tradis}. Then
	\begin{equation*}
		\begin{aligned} 
			&\| \bar{G}^n_1\|_{L^2(\Omega)}\leq C\left (\|G_{1}(0)\|_{L^2(\Omega)}+\|G_{2}(0)\|_{L^2(\Omega)}\right ),\\
			&\| \bar{G}^n_2\|_{L^2(\Omega)}\leq C\left (\|G_{1}(0)\|_{L^2(\Omega)}+\|G_{2}(0)\|_{L^2(\Omega)}\right ).
		\end{aligned}
	\end{equation*}
\end{theorem}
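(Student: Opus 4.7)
The plan is to mirror the generating-function / contour-integral argument used in the proof of Theorem \ref{thmO1trareg}, adapting it to the SBD generating function $\delta(\zeta)=((1-\zeta)+(1-\zeta)^2/2)/\tau$ and to the modified scheme \eqref{equO2tradis}. First, I would decompose $\bar{G}^n_i=\bar{V}^n_i+G_i(0)$, since then it suffices to bound $\|\bar{V}^n_i\|_{L^2(\Omega)}$; the contribution of $G_i(0)$ is trivially controlled. The discrete Laplace transform computation already carried out in the proof of Theorem \ref{thmO2CQerr} provides explicit formulas for $\sum_{n=1}^\infty\bar{V}^n_{i,k}\zeta^n$ ($k=1,2,3,4$) in \eqref{equdefbarVii}, built from $H(\delta(\zeta))$, $H_{\alpha_i}(\delta(\zeta))$, $\nu(\zeta)$, and the initial data. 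After applying Cauchy's formula on $|\zeta|=\xi_\tau=e^{-(\kappa+1)\tau}$, performing the change of variable $\zeta=e^{-z\tau}$, and deforming to the contour $\Gamma^\tau_{\theta,\kappa}$, each $\bar{V}^n_{i,k}$ becomes a contour integral. For example, proceeding as in the derivation of \eqref{equdefV11},
\begin{equation*}
\bar{V}^n_{1,1}=\frac{1}{2\pi\mathbf{i}}\int_{\Gamma^\tau_{\theta,\kappa}}e^{zt_n}(-(a+A))H_{\alpha_1}(\delta(e^{-z\tau}))(\delta(e^{-z\tau}))^{-1}\mu(e^{-z\tau})dz\,G_1(0),
\end{equation*}
where $\mu(\zeta)=\tau\delta(\zeta)\nu(\zeta)=\zeta(3-\zeta)^2/4=1+O(z^2\tau^2)$ is $O(1)$ on the deformed contour.

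Second, I would estimate each contour integral by inserting the operator bounds from Lemmas \ref{lemtheestimateofHblabla} and \ref{lemaAH} together with Lemma \ref{lemdelta}. Specifically, the four pieces of $\bar{V}^n_1$ carry, after cancellation of powers of $\delta(\zeta)$ against those coming from $\nu(\zeta)$, the operators $(a+A)H_{\alpha_1}(\delta)$, $aH_{\alpha_1}(\delta)$, $a(a+A)H(\delta)$, and $a^2H(\delta)$ acting on the initial data $G_1(0)$ or $G_2(0)$, with scalar factors of the form $(\delta(e^{-z\tau}))^{\beta}$ for appropriate $\beta$. Each operator norm is bounded by $C$, $C|z|^{-\alpha_1}$, $C\min(|z|^{-\alpha_1},|z|^{-\alpha_2})$, or $C|z|^{-\alpha_1-\alpha_2}$ respectively, by Lemmas \ref{lemtheestimateofHblabla} and \ref{lemaAH}. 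Combined with $|\delta(e^{-z\tau})|\sim|z|$, this reduces every term to an integral of the form
\begin{equation*}
\int_{\Gamma^\tau_{\theta,\kappa}}e^{\Re(z)t_n}|z|^{-\beta}|dz|,
\end{equation*}
for various $\beta\ge1$. Splitting $\Gamma^\tau_{\theta,\kappa}$ into the arc $|z|=\kappa$ (with $\kappa\ge1/t_n$) and the two rays, a standard calculation gives a bound of order $t_n^{\beta-1}\le CT^{\beta-1}$, hence uniform in $n$. The same argument applied to $\bar{V}^n_2$ yields the analogous bound. Summing the four contributions of $\bar{V}^n_1$ and $\bar{V}^n_2$ and adding back $G_i(0)$ produces the stated regularity.

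The main obstacle will be the bookkeeping for the four pieces of each $\bar{V}^n_i$: in particular, confirming that the factor $\nu(\zeta)$ (equivalently $\mu(e^{-z\tau})/(\tau\delta(e^{-z\tau}))$) together with the explicit powers of $\delta(\zeta)$ appearing in \eqref{equdefbarVii} always combine so that an $(a+A)$ factor meets either $H_{\alpha_i}$ or $H$, allowing Lemma \ref{lemaAH} to remove the would-be $t_n^{-\alpha_i}$ singularity. This is precisely why the bound is $t$-uniform here, in contrast to the $\nu=1$ line of Theorem \ref{thmO1trareg} where the same lemma is not needed; absent Lemma \ref{lemaAH} one would only recover the weaker estimate $\|\bar{G}^n_i\|_{L^2(\Omega)}\le C(t_n^{-\alpha_i}\|G_i(0)\|_{L^2(\Omega)}+\cdots)$.
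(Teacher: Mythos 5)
Your proposal is correct and matches the route the paper intends: the paper omits the argument entirely (stating only that it is ``similar to the proof of Theorem \ref{thmO1trareg}''), and your plan of reusing the representation \eqref{equdefbarVii} for the pieces $\bar{V}^n_{i,k}$, passing to the contour $\Gamma^\tau_{\theta,\kappa}$, and invoking Lemmas \ref{lemtheestimateofHblabla}, \ref{lemaAH} and \ref{lemdelta} to reduce each piece to $\int_{\Gamma^\tau_{\theta,\kappa}}e^{\Re(z)t_n}|z|^{-\beta}|dz|$ with $\beta\geq 1$ is exactly the natural way to carry that out. The only quibble is your closing remark: the bound $\|(a+A)H_{\alpha_1}\|\leq C$ already follows from the $\|AH_{\alpha_1}\|\leq C$ and $\|H_{\alpha_1}\|\leq C|z|^{-\alpha_1}$ estimates of Lemma \ref{lemtheestimateofHblabla}, so Lemma \ref{lemaAH} is a convenience rather than the decisive ingredient for $t$-uniformity, but this does not affect the validity of the proof.
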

\begin{proof}
	The proof is similar to the proof of Theorem \ref{thmO1trareg}.
\end{proof}
Next we provide the error estimate between $\bar{G}^n_1$, $\bar{G}^n_2$ and $G^n_1$, $G^n_2$, which are the solutions of the systems \eqref{equO2tradis} and \eqref{equO2fastdis}, respectively.
\begin{theorem}\label{thmO2errortrafast}
	Let $\bar{G}^n_{1}$, $\bar{G}^n_{2}$ and $G^n_{1}$, $G^n_{2}$  be, respectively, the solutions of the systems \eqref{equO2tradis} and \eqref{equO2fastdis}. Then
	\begin{equation*}
		\begin{aligned}
			\|G^n_{1}-\bar{G}^n_{1}\|_{L^2(\Omega)}+
			\|G^n_{2}-\bar{G}^n_{2}\|_{L^2(\Omega)} \leq Cn\max_{0\leq i\leq n}(\epsilon_{i})(\|G_1(0)\|_{L^2(\Omega)}+\|G_2(0)\|_{L^2(\Omega)}),
		\end{aligned}
	\end{equation*}
	where $\epsilon_{i}=\max(|\epsilon^{1-\alpha_{1}}_{2,i}|,|\epsilon^{1-\alpha_{2}}_{2,i}|)$.
\end{theorem}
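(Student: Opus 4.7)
The strategy mirrors the proof of Theorem \ref{thmerrortraadnfast}, but adapted to the SBD operator $\delta(\zeta)=((1-\zeta)+(1-\zeta)^2/2)/\tau$ and with $\bar{G}^n_j$-regularity supplied by Theorem \ref{thmO2trareg}. Introduce the errors $e^n_j=\bar{G}^n_j-G^n_j$ for $j=1,2$. First I would subtract \eqref{equO2fastdis} from \eqref{equO2tradis}: the $n=1$ equations of the two schemes coincide, so $e^1_1=e^1_2=0$; for $n\geq 2$ the correction terms $\frac{1}{2}d^{1-\alpha_j}_{2,n-1}G_j(0)$ and the coefficient $\frac{1}{2}d^{1-\alpha_j}_{2,n-1}AG_j(0)$ cancel exactly, and I am left with a BDF2 system in $e^n_1,e^n_2$ whose right-hand side carries the quadrature residuals $\epsilon^{1-\alpha_1}_{2,i}$ and $\epsilon^{1-\alpha_2}_{2,i}$ acting on both $e^{n-i}_j$ and $\bar{G}^{n-i}_j$, with $e^{-1}_j=0$.

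Next I would multiply by $\zeta^n$, sum from $n=1$ to $\infty$, and use \eqref{equdefofdj2} together with the identity $\sum_{n=1}^{\infty}\frac{1}{\tau}\bigl(\frac{3}{2}e^n_j-2e^{n-1}_j+\frac{1}{2}e^{n-2}_j\bigr)\zeta^n=\delta(\zeta)\sum_{n=1}^{\infty}e^n_j\zeta^n$ to recast the error system as a $2\times 2$ algebraic system in the generating functions $\sum_{n\geq 1}e^n_j\zeta^n$ with Laplace-type parameter $\delta(\zeta)$. Solving this system via Cramer's rule produces exactly the combinations $H_{\alpha_1}(\delta(\zeta))$, $H(\delta(\zeta))$, $H_{\alpha_2}(\delta(\zeta))$ appearing in \eqref{equdefHz} and \eqref{H12}; the right-hand sides are products of these with $(a+A)\varepsilon_2^{1-\alpha_1}(\zeta)$ or $a\varepsilon_2^{1-\alpha_2}(\zeta)$ (where $\varepsilon_2^\alpha(\zeta)=\sum_{i\geq 0}\epsilon^\alpha_{2,i}\zeta^i$) acting on $\sum_n e^n_j\zeta^n$ and on $\sum_n \bar{G}^n_j\zeta^n$.

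I would then define eight kernel coefficients $E^n_{k,i,j}$ in analogy with the proof of Theorem \ref{thmerrortraadnfast}, invert each generating function by a Cauchy integral on $|\zeta|=e^{-(\kappa+1)\tau}$, substitute $\zeta=e^{-z\tau}$, and deform the contour to $\Gamma^\tau_{\theta,\kappa}$. On this deformed contour, I bound $|\varepsilon_2^{1-\alpha_j}(e^{-z\tau})|$ by $\tau^{-1}\max_{0\leq i\leq n}|\epsilon^{1-\alpha_j}_{2,i}|$ (from the Cauchy formula applied to its coefficients), apply Lemma \ref{lemtheestimateofHblabla}, Lemma \ref{lemaAH} and Lemma \ref{lemdelta} to control $\|(a+A)H_{\alpha_j}(\delta(e^{-z\tau}))\|$, $\|aH(\delta(e^{-z\tau}))\|$, and the powers of $\delta$, and use $\int_{\Gamma^\tau_{\theta,\kappa}}e^{\Re(z)t_n}|z|^{-\beta}|dz|\leq C$ for $\beta>1$ to conclude $\|E^n_{k,i,j}\|\leq C\max_{0\leq i\leq n}(\epsilon_i)$.

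Assembling these estimates yields
\begin{equation*}
\|e^n_1\|_{L^2(\Omega)}+\|e^n_2\|_{L^2(\Omega)}\leq C\max_{0\leq i\leq n}(\epsilon_i)\sum_{i=1}^{n-1}\bigl(\|e^i_1\|_{L^2(\Omega)}+\|e^i_2\|_{L^2(\Omega)}\bigr)+C\max_{0\leq i\leq n}(\epsilon_i)\sum_{i=1}^{n}\bigl(\|\bar{G}^i_1\|_{L^2(\Omega)}+\|\bar{G}^i_2\|_{L^2(\Omega)}\bigr),
\end{equation*}
and a discrete Gr\"onwall inequality combined with the uniform bound of Theorem \ref{thmO2trareg} gives the stated $Cn\max_{0\leq i\leq n}(\epsilon_i)(\|G_1(0)\|_{L^2(\Omega)}+\|G_2(0)\|_{L^2(\Omega)})$ estimate. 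The main obstacle is verifying that the $\frac{1}{2}d^{1-\alpha_j}_{2,n-1}$ correction terms cancel cleanly in the subtraction so that the residual equation has the clean convolution form needed for the generating-function method; once that is in place, the contour estimates follow routinely from the lemmas already proved.
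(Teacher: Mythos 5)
Your proposal is correct and follows essentially the same route as the paper's proof: subtracting the two schemes (the $\tfrac12 d^{1-\alpha_j}_{2,n-1}G_j(0)$ correction terms do cancel exactly since $\bar G^0_j=G^0_j=G_j(0)$), passing to generating functions with $\delta(\zeta)$, solving for $\sum_n e^n_j\zeta^n$ in terms of $H$, $H_{\alpha_1}$, $H_{\alpha_2}$, estimating the eight kernel sequences by Cauchy integral plus contour deformation and Lemmas \ref{lemtheestimateofHblabla}, \ref{lemaAH}, \ref{lemdelta}, and closing with discrete Gr\"onwall and Theorem \ref{thmO2trareg}. No gaps worth noting.
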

\begin{proof}
	In the proof, we take $\kappa\geq 1/t$ for given $t$ and ensure $\kappa$ is large enough to satisfy the conditions in Lemmas \ref{lemtheestimateofHblabla} and  \ref{lemaAH}. Subtracting \eqref{equO2fastdis} from \eqref{equO2tradis}, denoting $e^n_{1}=\bar{G}^n_1-G^n_1$, $e^n_{2}=\bar{G}^n_2-G^n_2$, $e^{-1}_1=e^{-1}_2=0$ and using $e^0_1=e^0_2=0$, we have
	\begin{equation*}
		\begin{aligned}
			&\frac{1}{\tau}\left(\frac{3}{2}e^n_1-2e^{n-1}_1+\frac{1}{2}e^{n-2}_1\right)+a\sum_{i=0}^{n-1}d^{1-\alpha_1}_{2,i}e^{n-i}_{1}
			+\sum_{i=0}^{n-1}d^{1-\alpha_1}_{2,i}A e^{n-i}_{1}\\
			&\qquad-a\sum_{i=0}^{n-1}d^{1-\alpha_2}_{2,i}e^{n-i}_{2}=-\sum_{i=0}^{n-1}\epsilon^{1-\alpha_1}_{2,i}(aG^{n-i}_1+AG^{n-i}_1)+\sum_{i=0}^{n-1}\epsilon^{1-\alpha_2}_{2,i}aG^{n-i}_2,\\
			&\frac{1}{\tau}\left(\frac{3}{2}e^n_2-2e^{n-1}_2+\frac{1}{2}e^{n-2}_2\right)+a\sum_{i=0}^{n-1}d^{1-\alpha_2}_{2,i}e^{n-i}_{2}+\sum_{i=0}^{n-1}d^{1-\alpha_2}_{2,i}A e^{n-i}_{2}\\
			&\qquad- a\sum_{i=0}^{n-1}d^{1-\alpha_1}_{2,i}e^{n-i}_{1}=-\sum_{i=0}^{n-1}\epsilon^{1-\alpha_2}_{2,i}(aG^{n-i}_2+AG^{n-i}_2)+\sum_{i=0}^{n-1}\epsilon^{1-\alpha_1}_{2,i}aG^{n-i}_1.\\
		\end{aligned}
	\end{equation*}
	By simple calculation, we obtain
	\begin{equation}\label{equerrorsbd}
		\begin{aligned}
			&\frac{1}{\tau}\left(\frac{3}{2}e^n_1-2e^{n-1}_1+\frac{1}{2}e^{n-2}_1\right)+a\sum_{i=0}^{n-1}d^{1-\alpha_1}_{2,i}e^{n-i}_{1}
			+\sum_{i=0}^{n-1}d^{1-\alpha_1}_{2,i}A e^{n-i}_{1}\\
			&\qquad-a\sum_{i=0}^{n-1}d^{1-\alpha_2}_{2,i}e^{n-i}_{2}=\sum_{i=0}^{n-1}\epsilon^{1-\alpha_1}_{2,i}(ae^{n-i}_1+Ae^{n-i}_1)-\sum_{i=0}^{n-1}\epsilon^{1-\alpha_2}_{2,i}ae^{n-i}_2\\
			&\qquad\qquad-\sum_{i=0}^{n-1}\epsilon^{1-\alpha_1}_{2,i}(a\bar{G}^{n-i}_1+A\bar{G}^{n-i}_1)+\sum_{i=0}^{n-1}\epsilon^{1-\alpha_2}_{2,i}a\bar{G}^{n-i}_2,\\
			&\frac{1}{\tau}\left(\frac{3}{2}e^n_2-2e^{n-1}_2+\frac{1}{2}e^{n-2}_2\right)+a\sum_{i=0}^{n-1}d^{1-\alpha_2}_{2,i}e^{n-i}_{2}+\sum_{i=0}^{n-1}d^{1-\alpha_2}_{2,i}A e^{n-i}_{2}\\
			&\qquad- a\sum_{i=0}^{n-1}d^{1-\alpha_1}_{2,i}e^{n-i}_{1}=\sum_{i=0}^{n-1}\epsilon^{1-\alpha_2}_{2,i}(ae^{n-i}_2+Ae^{n-i}_2)-\sum_{i=0}^{n-1}\epsilon^{1-\alpha_1}_{2,i}ae^{n-i}_1\\
			&\qquad\qquad-\sum_{i=0}^{n-1}\epsilon^{1-\alpha_2}_{2,i}(a\bar{G}^{n-i}_2+A\bar{G}^{n-i}_2)+\sum_{i=0}^{n-1}\epsilon^{1-\alpha_1}_{2,i}a\bar{G}^{n-i}_1.
		\end{aligned}
	\end{equation}
	Multiplying $\zeta^n$ and summing from $1$ to $\infty$ for the both sides of\ equations in \eqref{equerrorsbd} lead to
	\begin{equation*}
		\begin{aligned}
			&\sum_{n=1}^{\infty}\frac{1}{\tau}\left(\frac{3}{2}e^n_1-2e^{n-1}_1+\frac{1}{2}e^{n-2}_1\right)\zeta^n+a\sum_{n=1}^{\infty}\sum_{i=0}^{n-1}d^{1-\alpha_1}_{2,i}e^{n-i}_{1}\zeta^n
			+\sum_{n=1}^{\infty}\sum_{i=0}^{n-1}d^{1-\alpha_1}_{2,i}A e^{n-i}_{1}\zeta^n\\
			&\qquad-a\sum_{n=1}^{\infty}\sum_{i=0}^{n-1}d^{1-\alpha_2}_{2,i}e^{n-i}_{2}\zeta^n=\sum_{n=1}^{\infty}\sum_{i=0}^{n-1}\epsilon^{1-\alpha_1}_{2,i}(ae^{n-i}_1+Ae^{n-i}_1)\zeta^n-\sum_{n=1}^{\infty}\sum_{i=0}^{n-1}\epsilon^{1-\alpha_2}_{2,i}ae^{n-i}_2\zeta^n\\
			&\qquad\qquad-\sum_{n=1}^{\infty}\sum_{i=0}^{n-1}\epsilon^{1-\alpha_1}_{2,i}(a\bar{G}^{n-i}_1+A\bar{G}^{n-i}_1)\zeta^n+\sum_{n=1}^{\infty}\sum_{i=0}^{n-1}\epsilon^{1-\alpha_2}_{2,i}a\bar{G}^{n-i}_2\zeta^n,\\
			&\sum_{n=1}^{\infty}\frac{1}{\tau}\left(\frac{3}{2}e^n_2-2e^{n-1}_2+\frac{1}{2}e^{n-2}_2\right)\zeta^n+a\sum_{n=1}^{\infty}\sum_{i=0}^{n-1}d^{1-\alpha_2}_{2,i}e^{n-i}_{2}\zeta^n+\sum_{n=1}^{\infty}\sum_{i=0}^{n-1}d^{1-\alpha_2}_{2,i}A e^{n-i}_{2}\zeta^n\\
			&\qquad- a\sum_{n=1}^{\infty}\sum_{i=0}^{n-1}d^{1-\alpha_1}_{2,i}e^{n-i}_{1}\zeta^n=\sum_{n=1}^{\infty}\sum_{i=0}^{n-1}\epsilon^{1-\alpha_2}_{2,i}(ae^{n-i}_2+Ae^{n-i}_2)\zeta^n-\sum_{n=1}^{\infty}\sum_{i=0}^{n-1}\epsilon^{1-\alpha_1}_{2,i}ae^{n-i}_1\zeta^n\\
			&\qquad\qquad-\sum_{n=1}^{\infty}\sum_{i=0}^{n-1}\epsilon^{1-\alpha_2}_{2,i}(a\bar{G}^{n-i}_2+A\bar{G}^{n-i}_2)\zeta^n+\sum_{n=1}^{\infty}\sum_{i=0}^{n-1}\epsilon^{1-\alpha_1}_{2,i}a\bar{G}^{n-i}_1\zeta^n.
		\end{aligned}
	\end{equation*}
	Introducing
	\begin{equation*}
		\varepsilon^{\alpha}_2(\zeta)=\sum_{i=0}^{\infty}\epsilon^{\alpha}_{2,i}\zeta^i
	\end{equation*}
	and using Eq. \eqref{equdefofdj2}, we obtain
	\begin{equation*}
		\begin{aligned}
			&\delta(\zeta)\sum_{n=1}^{\infty}e_1^n\zeta^n+a(\delta(\zeta))^{1-\alpha_1}\sum_{n=1}^{\infty}e^{n}_{1}\zeta^n
			+(\delta(\zeta))^{1-\alpha_1}\sum_{n=1}^{\infty}A e^{n}_{1}\zeta^n\\
			&\qquad-a(\delta(\zeta))^{1-\alpha_2}\sum_{n=1}^{\infty}e^{n}_{2}\zeta^n=\varepsilon^{1-\alpha_1}_{2}(\zeta)\sum_{n=1}^{\infty}(ae^{n}_1+Ae^{n}_1)\zeta^n-\varepsilon^{1-\alpha_2}_{2}(\zeta)\sum_{n=1}^{\infty}ae^{n}_2\zeta^n\\
			&\qquad\qquad-\varepsilon^{1-\alpha_1}_{2}(\zeta)\sum_{n=1}^{\infty}(a\bar{G}^{n}_1+A\bar{G}^{n}_1)\zeta^n+\varepsilon^{1-\alpha_2}_{2}(\zeta)\sum_{n=1}^{\infty}a\bar{G}^{n}_2\zeta^n,\\
			&\delta(\zeta)\sum_{n=1}^{\infty}e_2^n\zeta^n+a(\delta(\zeta))^{1-\alpha_2}\sum_{n=1}^{\infty}e^{n}_{2}\zeta^n+(\delta(\zeta))^{1-\alpha_2}\sum_{n=1}^{\infty}A e^{n}_{2}\zeta^n\\
			&\qquad- a(\delta(\zeta))^{1-\alpha_1}\sum_{n=1}^{\infty}e^{n}_{1}\zeta^n=\varepsilon^{1-\alpha_2}_{2}(\zeta)\sum_{n=1}^{\infty}(ae^{n}_2+Ae^{n}_2)\zeta^n-\varepsilon^{1-\alpha_1}_{2}(\zeta)\sum_{n=1}^{\infty}ae^{n}_1\zeta^n\\
			&\qquad\qquad-\varepsilon^{1-\alpha_2}_{2}(\zeta)\sum_{n=1}^{\infty}(a\bar{G}^{n}_2+A\bar{G}^{n}_2)\zeta^n+\varepsilon^{1-\alpha_1}_{2}(\zeta)\sum_{n=1}^{\infty}a\bar{G}^{n}_1\zeta^n.
		\end{aligned}
	\end{equation*}
	Thus
	\begin{equation*}
		\begin{aligned}
			\sum_{n=1}^{\infty}e^n_1\zeta^n=&H_{\alpha_1}(\delta(\zeta))(\delta(\zeta))^{\alpha_1-1}\left(\varepsilon^{1-\alpha_1}_{2}(\zeta)\sum_{n=1}^{\infty}(ae^{n}_1+Ae^{n}_1)\zeta^n-\varepsilon^{1-\alpha_2}_{2}(\zeta)\sum_{n=1}^{\infty}ae^{n}_2\zeta^n\right.\\
			&\left.-\varepsilon^{1-\alpha_1}_{2}(\zeta)\sum_{n=1}^{\infty}(a\bar{G}^{n}_1+A\bar{G}^{n}_1)\zeta^n+\varepsilon^{1-\alpha_2}_{2}(\zeta)\sum_{n=1}^{\infty}a\bar{G}^{n}_2\zeta^n\right)\\
			&+aH(\delta(\zeta))(\delta(\zeta))^{\alpha_1-1}\left(\varepsilon^{1-\alpha_2}_{2}(\zeta)\sum_{n=1}^{\infty}(ae^{n}_2+Ae^{n}_2)\zeta^n-\varepsilon^{1-\alpha_1}_{2}(\zeta)\sum_{n=1}^{\infty}ae^{n}_1\zeta^n\right.\\
			&\left.-\varepsilon^{1-\alpha_2}_{2}(\zeta)\sum_{n=1}^{\infty}(a\bar{G}^{n}_2+A\bar{G}^{n}_2)\zeta^n+\varepsilon^{1-\alpha_1}_{2}(\zeta)\sum_{n=1}^{\infty}a\bar{G}^{n}_1\zeta^n\right),\\
			\sum_{n=1}^{\infty}e^n_2\zeta^n=&aH(\delta(\zeta))(\delta(\zeta))^{\alpha_2-1}\left(\varepsilon^{1-\alpha_1}_{2}(\zeta)\sum_{n=1}^{\infty}(ae^{n}_1+Ae^{n}_1)\zeta^n-\varepsilon^{1-\alpha_2}_{2}(\zeta)\sum_{n=1}^{\infty}ae^{n}_2\zeta^n\right.\\
			&\left.-\varepsilon^{1-\alpha_1}_{2}(\zeta)\sum_{n=1}^{\infty}(a\bar{G}^{n}_1+A\bar{G}^{n}_1)\zeta^n+\varepsilon^{1-\alpha_2}_{2}(\zeta)\sum_{n=1}^{\infty}a\bar{G}^{n}_2\zeta^n\right)\\
			&+H_{\alpha_2}(\delta(\zeta))(\delta(\zeta))^{\alpha_2-1}\left(\varepsilon^{1-\alpha_2}_{2}(\zeta)\sum_{n=1}^{\infty}(ae^{n}_2+Ae^{n}_2)\zeta^n-\varepsilon^{1-\alpha_1}_{2}(\zeta)\sum_{n=1}^{\infty}ae^{n}_1\zeta^n\right.\\
			&\left.-\varepsilon^{1-\alpha_2}_{2}(\zeta)\sum_{n=1}^{\infty}(a\bar{G}^{n}_2+A\bar{G}^{n}_2)\zeta^n+\varepsilon^{1-\alpha_1}_{2}(\zeta)\sum_{n=1}^{\infty}a\bar{G}^{n}_1\zeta^n\right).
		\end{aligned}
	\end{equation*}
	Denote
	\begin{equation*}
		\begin{aligned}
			&\sum_{i=0}^{\infty}E^i_{0,1,1}\zeta^i=a^2H\left (\delta(\zeta)\right )\left (\delta(\zeta)\right )^{\alpha_1-1}\varepsilon^{1-\alpha_1}_2(\zeta),\\
			&\sum_{i=0}^{\infty}E^i_{0,1,2}\zeta^i=aH\left (\delta(\zeta)\right )\left (\delta(\zeta)\right )^{\alpha_1-1}(a+A)\varepsilon^{1-\alpha_2}_2(\zeta),\\
			&\sum_{i=0}^{\infty}E^i_{0,2,1}\zeta^i=aH\left (\delta(\zeta)\right )\left (\delta(\zeta)\right )^{\alpha_2-1}(a+A)\varepsilon^{1-\alpha_1}_2(\zeta),\\
			&\sum_{i=0}^{\infty}E^i_{0,2,2}\zeta^i=a^2H\left (\delta(\zeta)\right )\left (\delta(\zeta)\right )^{\alpha_2-1}\varepsilon^{1-\alpha_2}_2(\zeta),\\
			&\sum_{i=0}^{\infty}E^i_{1,1,1}\zeta^i=H_{\alpha_1}\left (\delta(\zeta)\right )\left (\delta(\zeta)\right )^{\alpha_1-1}(a+A)\varepsilon^{1-\alpha_1}_2(\zeta),\\
			&\sum_{i=0}^{\infty}E^i_{1,1,2}\zeta^i=aH_{\alpha_1}\left (\delta(\zeta)\right )\left (\delta(\zeta)\right )^{\alpha_1-1}\varepsilon^{1-\alpha_2}_2(\zeta),\\
			&\sum_{i=0}^{\infty}E^i_{1,2,1}\zeta^i=aH_{\alpha_2}\left (\delta(\zeta)\right )\left (\delta(\zeta)\right )^{\alpha_2-1}\varepsilon^{1-\alpha_1}_2(\zeta),\\
			&\sum_{i=0}^{\infty}E^i_{1,2,2}\zeta^i=H_{\alpha_2}\left (\delta(\zeta)\right )\left (\delta(\zeta)\right )^{\alpha_2-1}(a+A)\varepsilon^{1-\alpha_2}_2(\zeta).
		\end{aligned}
	\end{equation*}
	Here we first consider the estimate of $E^n_{0,1,1}$.
	Taking $\xi_{\tau}=e^{-\tau(\kappa+1)}$, we get
	\begin{equation*}
		E^n_{0,1,1}=\frac{1}{2\pi \mathbf{i}}\int_{|\zeta|=\zeta_{\tau}}\zeta^{-n-1}a^2H\left (\delta(\zeta)\right )\left (\delta(\zeta)\right )^{\alpha_1-1}\varepsilon^{1-\alpha_1}_2(\zeta)d\zeta,
	\end{equation*}
	which leads to
	\begin{equation*}
		\|E^n_{0,1,1}\|\leq C\tau^{-1}\max_{0\leq i\leq n}(|\epsilon^{1-\alpha_1}_{2,i}|)\int_{|\zeta|=\zeta_{\tau}}|\zeta|^{-n-1}\left \|a^2H\left (\delta(\zeta)\right )\right \| \left |\delta(\zeta)\right |^{\alpha_1-1}\left|\frac{1-\zeta}{\tau}\right|^{-1}|d\zeta|.
	\end{equation*}
Letting $\zeta=e^{-z\tau}$, there is
	\begin{equation*}
		\|E^n_{0,1,1}\|\leq C\max_{0\leq i\leq n}(|\epsilon^{1-\alpha_1}_{2,i}|)\int_{\Gamma^{\tau}}|e^{zt_n}|\left \|a^2H\left (\delta(e^{-z\tau})\right )\right \| \left |\delta(e^{-z\tau})\right |^{\alpha_1-1}\left|\frac{1-\zeta}{\tau}\right|^{-1}|dz|,
	\end{equation*}
	where $\Gamma^\tau=\{z=\kappa+1+\mathbf{i}y:y\in\mathbb{R}~{\rm and}~|y|\leq \pi/\tau\}$. Next,  we deform the contour $\Gamma^\tau$ to
	$\Gamma^\tau_{\theta,\kappa}=\{z\in \mathbb{C}:\kappa\leq |z|\leq\frac{\pi}{\tau\sin(\theta)},|\arg z|=\theta\}\bigcup\{z\in \mathbb{C}:|z|=\kappa,|\arg z|\leq\theta\}$. Thus
	\begin{equation*}
		\|E^n_{0,1,1}\|\leq C\max_{0\leq i\leq n}(|\epsilon^{1-\alpha_1}_{2,i}|)\int_{\Gamma^{\tau}_{\theta,\kappa}}|e^{zt_n}|\left \|a^2H\left (\delta(e^{-z\tau})\right )\right \| \left |\delta(e^{-z\tau})\right |^{\alpha_1-1}\left|\frac{1-\zeta}{\tau}\right|^{-1}|dz|.
	\end{equation*}
	Using Lemmas \ref{lemtheestimateofHblabla} and \ref{lemdelta}, we have
	\begin{equation*}
		\|E^n_{0,1,1}\|\leq C\max_{0\leq i\leq n}(|\epsilon^{1-\alpha_1}_{2,i}|)\int_{\Gamma^{\tau}_{\theta,\kappa}}e^{\Re(z)t_n}|z|^{-\alpha_2-2}|dz|\leq C\max_{0\leq i\leq n}(\epsilon_{i}).
	\end{equation*}
	Similarly, we have
	\begin{equation*}
		\|E^n_{0,1,2}\|,\|E^n_{0,2,1}\|,\|E^n_{0,2,2}\|,\|E^n_{1,1,1}\|,\|E^n_{1,1,2}\|,\|E^n_{1,2,1}\|,\|E^n_{1,2,2}\|\leq C\max_{0\leq i\leq n}(\epsilon_{i}).
	\end{equation*}
	Thus
	\begin{equation*}
		\begin{aligned}
			\|e^n_1\|_{L^2(\Omega)}+\|e^n_2\|_{L^2(\Omega)}\leq& C\max_{0\leq i\leq n}(\epsilon_{i})\sum_{i=1}^{n-1}(\|e^i_1\|_{L^2(\Omega)}		+\|e^i_2\|_{L^2(\Omega)})\\
			&+C\max_{0\leq i\leq n}(\epsilon_{i})\sum_{i=1}^{n}(\|\bar{G}^i_1\|_{L^2(\Omega)}+\|\bar{G}^i_2\|_{L^2(\Omega)}).
		\end{aligned}
	\end{equation*}
Combining Gr\"{o}nwall inequality and Theorem \ref{thmO2trareg} leads to
	\begin{equation*}
		\|e^n_1\|_{L^2(\Omega)}+\|e^n_2\|_{L^2(\Omega)}\leq Cn\max_{0\leq i\leq n}(\epsilon_{i})(\|G_1(0)\|_{L^2(\Omega)}+\|G_2(0)\|_{L^2(\Omega)}).
	\end{equation*}	
\end{proof}
Combining Theorems \ref{thmO2CQerr} and \ref{thmO2errortrafast}, we get the following error estimates for the fast SBD scheme.
\begin{theorem}
	Let $G_{1}$, $G_{2}$ and $G^n_{1}$, $G^n_{2}$ be the solutions of the systems \eqref{equrqtosol} and \eqref{equO2fastdis}, respectively. Then we have the estimates, if $G_{1}(0)$, $G_{2}(0)\in L^2(\Omega)$,
	\begin{equation*}
		\begin{aligned}
			\|G_{1}(t_n)-G^n_{1}\|_{L^2(\Omega)}\leq&C\tau^2\left(t_n^{-2}\|G_{1}(0)\|_{L^2(\Omega)}+t_n^{\alpha_2-2}\|G_{2}(0)\|_{L^2(\Omega)}\right)\\
			&+Cn\max_{0\leq i\leq n}(\epsilon_{i})\left(\|G_1(0)\|_{L^2(\Omega)}+\|G_2(0)\|_{L^2(\Omega)}\right),\\
			\|G_{2}(t_n)-G^n_{2}\|_{L^2(\Omega)}\leq& C\tau^2\left(t_n^{\alpha_1-2}\|G_{1}(0)\|_{L^2(\Omega)}+t_n^{-2}\|G_{2}(0)\|_{L^2(\Omega)}\right)\\
			&+Cn\max_{0\leq i\leq n}(\epsilon_{i})\left(\|G_1(0)\|_{L^2(\Omega)}+\|G_2(0)\|_{L^2(\Omega)}\right),
		\end{aligned}
	\end{equation*}
	where $\epsilon_{i}=\max(|\epsilon^{1-\alpha_{1}}_{2,i}|,|\epsilon^{1-\alpha_{2}}_{2,i}|)$.
\end{theorem}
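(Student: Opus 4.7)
My plan is to obtain the final estimates by combining the two preceding results, Theorem \ref{thmO2CQerr} (which controls the error of the standard SBD scheme against the true solution) and Theorem \ref{thmO2errortrafast} (which controls the error of the fast SBD scheme against the standard one), via the triangle inequality. Concretely, for each $i \in \{1,2\}$ I would write
\begin{equation*}
\|G_i(t_n)-G^n_i\|_{L^2(\Omega)} \leq \|G_i(t_n)-\bar{G}^n_i\|_{L^2(\Omega)} + \|\bar{G}^n_i-G^n_i\|_{L^2(\Omega)},
\end{equation*}
where $\bar{G}^n_i$ is the solution to the (non-fast) SBD system \eqref{equO2tradis}.

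The first step is to insert the bounds from Theorem \ref{thmO2CQerr} into the first term. This yields $C\tau^2(t_n^{-2}\|G_1(0)\|_{L^2(\Omega)} + t_n^{\alpha_2-2}\|G_2(0)\|_{L^2(\Omega)})$ for the $i=1$ case, and the analogous expression for $i=2$. The second step is to bound the second term using Theorem \ref{thmO2errortrafast}, which gives $Cn\max_{0\leq i\leq n}(\epsilon_i)(\|G_1(0)\|_{L^2(\Omega)} + \|G_2(0)\|_{L^2(\Omega)})$, where $\epsilon_i = \max(|\epsilon^{1-\alpha_1}_{2,i}|,|\epsilon^{1-\alpha_2}_{2,i}|)$. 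Note that Theorem \ref{thmO2errortrafast} bounds the sum $\|G^n_1-\bar{G}^n_1\|_{L^2(\Omega)}+\|G^n_2-\bar{G}^n_2\|_{L^2(\Omega)}$, so each individual term is automatically controlled by the same right-hand side.

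Adding the two contributions produces the desired inequality verbatim. Since this theorem is a pure triangle-inequality splicing of two already-established results, there is no genuine obstacle; the only minor subtlety is making sure the splitting-point numerical solutions $\bar{G}^n_1,\bar{G}^n_2$ in the two intermediate theorems refer to the same object (the solution of \eqref{equO2tradis}), which they do by construction. Thus the proof reduces to one line of triangle inequality followed by substitution of the two preceding estimates, under the stated regularity assumption $G_1(0),G_2(0)\in L^2(\Omega)$.
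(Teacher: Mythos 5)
Your proof is correct and is exactly the paper's argument: the paper states this theorem with the single remark ``Combining Theorems \ref{thmO2CQerr} and \ref{thmO2errortrafast}, we get the following error estimates,'' which is precisely the triangle-inequality splicing through the intermediate SBD solution $\bar{G}^n_i$ of \eqref{equO2tradis} that you describe. No gap.
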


\section{Numerical experiments}\label{sec:5}
In this section, we verify the effectiveness of the fast algorithms by comparing with the classical convolution quadrature schemes. Here, we denote $h$ as the mesh size and define
\begin{equation*}
	\begin{aligned}
		E_{i,\tau}=\|G^N_{i,\tau}-G_{i}(t_N)\|_{L^2(\Omega)}, ~i=1,2,
	\end{aligned}
\end{equation*}
where $G^N_{1,\tau}$ and $G^N_{2,\tau}$, respectively, signify the numerical solutions of $G_{1}$ and $G_{2}$ at the fixed time $t_N$ with time step size $\tau$. The temporal convergence rates can be calculated by
\begin{equation*}
	 {\rm Rate}=\frac{\ln(E_{i,\tau}/E_{i,\tau/2})}{\ln(2)},\  i=1,2.
\end{equation*}

In the numerical experiments, the following two groups of initial values are used:
\begin{enumerate}[(a)]
	\item\label{initial_a} \begin{equation*}
		G_{1}(x,0)=x(1-x),\qquad G_{2}(x,0)=\sin(x);
	\end{equation*}
	\item\label{initial_b} \begin{equation*}
	G_{1}(x,0)=(1-x)\chi_{(1/2,1)},\qquad G_{2}(x,0)=x\chi_{(0,1/2)},
	\end{equation*}
\end{enumerate}
where $\chi_{(a,b)}$ denotes the characteristic function on $(a,b)$.
\subsection{Performance of fast BE scheme}
We,  respectively, use the BE scheme and fast BE scheme to solve the system \eqref{equrqtosol} with $a=2$. Use the numerical solution with $\tau=1/3200$ and $h=1/256$ as the `exact' solution. Tables \ref{tab:O1smooth} and \ref{tab:O1nonsmooth} give the $L_2$ errors, convergence rates and CPU time for solving the system \eqref{equrqtosol} with the initial values  \eqref{initial_a} and \eqref{initial_b} for different $\alpha_1$ and $\alpha_2$ respectively, which show that the fast BE scheme has the same convergence rates as BE scheme and it takes much less CPU time when $\tau$ is small.
\begin{table}
\footnotesize\center
	\caption{$L_2$ error, convergence rates, and CPU time at $t=1$ with $h=1/256$}
	\label{tab:O1smooth}
\begin{tabular}{c|c|c|ccccc}
\hline	
$(\alpha_1,\alpha_2)$	&            &$1/\tau$            &        100 &        200 &        400 &        800 &       1600 \\
\hline	
	&            &  $E_{1,\tau}$          &  8.434E-06 &  4.141E-06 &  2.001E-06 &  9.335E-07 &  4.000E-07 \\
	
	&            &            &       Rate &    1.0263  &    1.0489  &    1.1003  &    1.2228  \\
	
	&         BE &     $E_{2,\tau}$       &  1.347E-04 &  6.609E-05 &  3.193E-05 &  1.489E-05 &  6.379E-06 \\
	
	&            &            &       Rate &    1.0276  &    1.0496  &    1.1007  &    1.2230  \\
	
	(0.3,0.6)&            &   CPU time(s)         &      1.34  &      3.19  &      7.08  &     22.33  &     78.53  \\
	\cline{2-8}
	 &            &   $E_{1,\tau}$         &  8.434E-06 &  4.141E-06 &  2.001E-06 &  9.335E-07 &  4.000E-07 \\
	
	&            &            &       Rate &    1.0263  &    1.0489  &    1.1003  &    1.2228  \\
	
	&        FBE &      $E_{2,\tau}$      &  1.347E-04 &  6.609E-05 &  3.193E-05 &  1.489E-05 &  6.379E-06 \\
	
	&            &            &       Rate &    1.0277  &    1.0496  &    1.1007  &    1.2229  \\
	
	&            &  CPU time(s)          &      1.39  &      2.92  &      5.78  &     11.97  &     23.95  \\
	\hline
	&            &  $E_{1,\tau}$          &  9.318E-06 &  4.574E-06 &  2.211E-06 &  1.031E-06 &  4.417E-07 \\
	
	&            &            &       Rate &    1.0266  &    1.0490  &    1.1004  &    1.2228  \\
	
	&         BE &    $E_{2,\tau}$        &  1.390E-04 &  6.811E-05 &  3.289E-05 &  1.533E-05 &  6.568E-06 \\
	
	&            &            &       Rate &    1.0290  &    1.0503  &    1.1010  &    1.2231  \\
	
	(0.4,0.7) &            &      CPU time(s)      &      1.48  &      3.14  &      7.27  &     22.42  &     79.66  \\
	\cline{2-8}
	&            &   $E_{1,\tau}$         &  9.318E-06 &  4.574E-06 &  2.210E-06 &  1.031E-06 &  4.417E-07 \\
	
	&            &            &       Rate &    1.0266  &    1.0490  &    1.1004  &    1.2228  \\
	
	&        FBE &   $E_{2,\tau}$         &  1.390E-04 &  6.811E-05 &  3.289E-05 &  1.533E-05 &  6.568E-06 \\
	
	&            &            &       Rate &    1.0290  &    1.0503  &    1.1010  &    1.2231  \\
	
	&            &  CPU time(s)          &      1.47  &      2.98  &      5.72  &     12.33  &     24.61  \\
\hline	
\end{tabular}
\end{table}

\begin{table}
\footnotesize\center
	\caption{$L_2$ error, convergence rates, and CPU time at $t=1$ with $h=1/256$}
	\label{tab:O1nonsmooth}
	\begin{tabular}{c|c|c|ccccc}
		\hline
	$(\alpha_1,\alpha_2)$	&            &    $1/\tau$        &        100 &        200 &        400 &        800 &       1600 \\
	\hline	
		&            &  $E_{1,\tau}$            &  1.916E-05 &  9.411E-06 &  4.550E-06 &  2.123E-06 &  9.095E-07 \\
		
		&            &            &       Rate &    1.0254  &    1.0485  &    1.1001  &    1.2227  \\
		
		&         BE &     $E_{2,\tau}$         &  3.467E-05 &  1.699E-05 &  8.204E-06 &  3.824E-06 &  1.638E-06 \\
		
		&            &            &       Rate &    1.0290  &    1.0502  &    1.1010  &    1.2231  \\
		
		(0.3,0.7)&            &  CPU time(s)          &      1.36  &      3.16  &      7.13  &     22.09  &     78.36  \\
		\cline{2-8}
		 &            &   $E_{1,\tau}$          &   1.916E-05 &  9.411E-06 &  4.550E-06 &  2.123E-06 &  9.095E-07 \\
		
		&            &            &       Rate &    1.0254  &    1.0485  &    1.1001  &    1.2227  \\
		
		&        FBE &    $E_{2,\tau}$          &  3.467E-05 &  1.699E-05 &  8.203E-06 &  3.824E-06 &  1.638E-06 \\
		
		&            &            &       Rate &    1.0290  &    1.0502  &    1.1010  &    1.2231  \\
		
		&            &   CPU time(s)         &      1.41  &      2.84  &      5.89  &     11.94  &     24.02  \\
		\hline
		&            &   $E_{1,\tau}$           &  2.510E-05 &  1.233E-05 &  5.959E-06 &  2.779E-06 &  1.191E-06 \\
		
		&            &            &       Rate &    1.0260  &    1.0487  &    1.1003  &    1.2228  \\
		
		&         BE &  $E_{2,\tau}$            &  3.364E-05 &  1.650E-05 &  7.971E-06 &  3.717E-06 &  1.592E-06 \\
		
		&            &            &       Rate &    1.0276  &    1.0496  &    1.1007  &    1.2230  \\
		
		(0.4,0.6) &            &   CPU time(s)         &      1.41  &      3.08  &      7.16  &     22.17  &     79.25  \\
		\cline{2-8}
		&            &    $E_{1,\tau}$          &  2.510E-05 &  1.233E-05 &  5.959E-06 &  2.779E-06 &  1.191E-06 \\
		
		&            &            &       Rate &    1.0260  &    1.0487  &    1.1002  &    1.2227  \\
		
		&        FBE &    $E_{2,\tau}$          &  3.364E-05 &  1.650E-05 &  7.971E-06 &  3.717E-06 &  1.592E-06 \\
		
		&            &            &       Rate &    1.0276  &    1.0496  &    1.1006  &    1.2229  \\
		
		&            &  CPU time(s)          &      1.41  &      2.92  &      6.03  &     12.05  &     24.41  \\
		\hline
	\end{tabular}  	
\end{table}

\subsection{Performance of fast SBD scheme}
Here we first discuss the choice of $N_s$. According to  \eqref{eqintegraltoapproO2}, we need to make $|\epsilon^\alpha_{2,i}|$ small enough such that the approximation of $d^{\alpha}_{2,i}$ effective, and there are two ways to achieve it:  the increase of the number of the integral points and the selection of a suitable parameter $N_s$.  For small $N_s$, one needs to use a large number of integral points to ensure the accuracy of the approximation of $d^\alpha_{2,i}$ because of the low convergence rates of the Gauss-Jacobi rule,  which increases the computation time and storage cost  tremendously.  Figure \ref{fig:031d362} shows the change of the error  $|\epsilon^\alpha_{2,i}|$ as $i$ increases when $T=1$, $\tau=1/1000$,  $\alpha=0.3$, and  $N_{p,1}+N_{p,2}=62$. It is found that when $i<15$, $d^{\alpha}_{2,i}$ can't be well approximated, i.e., $|\epsilon^\alpha_{2,i}|$ is large, so we start from the $15$-th term to approximate $d^\alpha_{2,i}$, that is, we take $N_s=15$ to ensure the accuracy of the approximation. The same thing happens in Figure \ref{fig:081d382} when $T=1$, $\tau=1/1000$, $\alpha=0.8$, and  $N_{p,1}+N_{p,2}=82$, hence we take $N_s=17$ in the fast SBD scheme. In a word,  choosing a suitable value for $N_s$ not only ensures the accuracy of the scheme, but also saves the computation time.

\begin{figure}
	\begin{center}
		\caption{$\alpha=0.3$, $\tau=1/1000$, $N_{p,1}+N_{p,2}=62$}\label{fig:031d362}
		\includegraphics[width=13.66cm,height=6cm,angle=0]{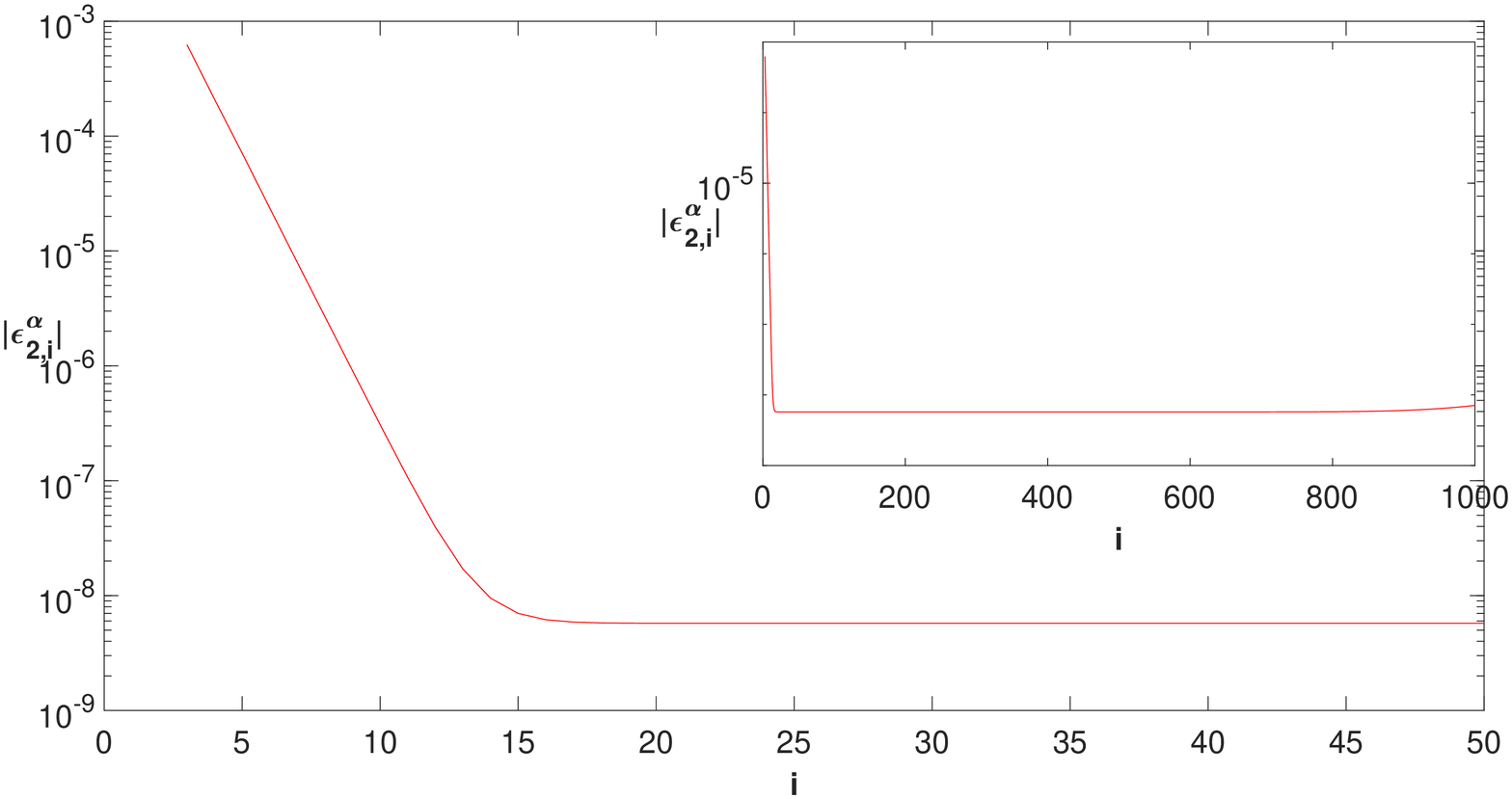}
	\end{center}
\end{figure}

\begin{figure}
	\begin{center}
		\caption{$\alpha=0.8$, $\tau=1/1000$, $N_{p,1}+N_{p,2}=82$}\label{fig:081d382}
		\includegraphics[width=13.66cm,height=6cm,angle=0]{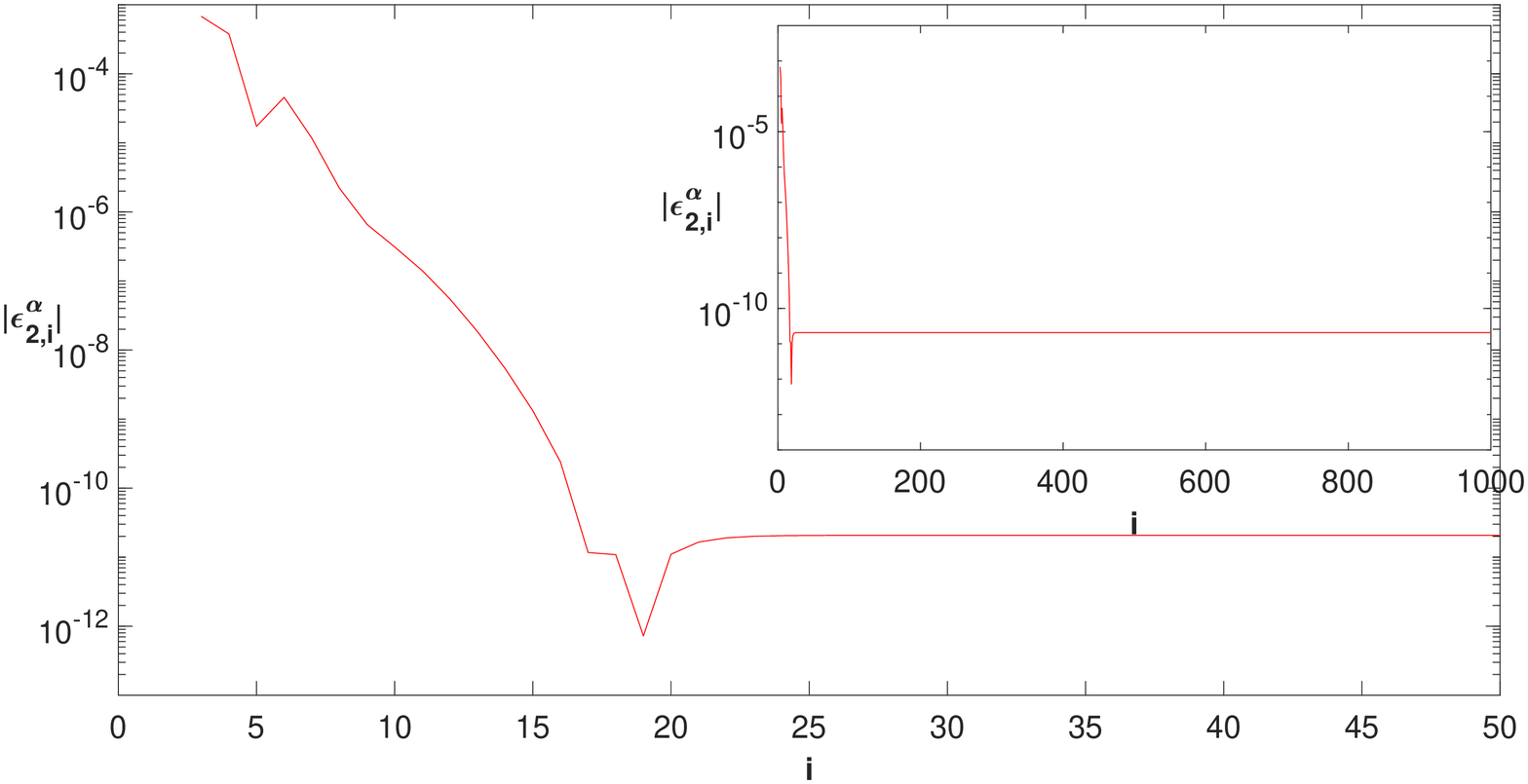}
	\end{center}
\end{figure}

Then we solve the system \eqref{equrqtosol} by the SBD scheme and fast SBD scheme, respectively, with $a=-1$. Use the numerical solution with $\tau=1/640$ and $h=1/1024$ as the `exact' solution. Tables \ref{tab:O2smooth} and \ref{tab:O2nonsmooth} provide the $L_2$ errors and convergence rates for solving the system \eqref{equrqtosol}, respectively,  with the initial values \eqref{initial_a} and \eqref{initial_b} for different $\alpha_1$ and $\alpha_2$, which show that the fast SBD scheme has the same convergence rates as SBD scheme.
\begin{table}
\footnotesize\center
	\caption{$L_2$ error  and convergence rates at $t=1$ with $h=1/1024$}
	\label{tab:O2smooth}
	\begin{tabular}{c|c|c|ccccc}
		\hline
		$(\alpha_1,\alpha_2)$&            &  $1/\tau$          &         20 &         40 &         80 &        160 &        320 \\
		\hline
		&            &    $E_{1,\tau}$        &  9.445E-06 &  2.274E-06 &  5.521E-07 &  1.304E-07 &  2.598E-08 \\
		
		&            &            &          Rate &    2.0546  &    2.0419  &    2.0825  &    2.3273  \\
		
		&         SBD &    $E_{2,\tau}$        &  3.976E-05 &  9.551E-06 &  2.317E-06 &  5.467E-07 &  1.090E-07 \\
		(0.3,0.4)
		&            &            &          Rate &    2.0577  &    2.0435  &    2.0831  &    2.3271  \\

		\cline{2-8}
		&            &   $E_{1,\tau}$         &  9.463E-06 &  2.274E-06 &  5.519E-07 &  1.303E-07 &  2.611E-08 \\
		
		&            &            &          Rate &    2.0572  &    2.0427  &    2.0829  &    2.3190  \\
		
		&        FSBD &    $E_{2,\tau}$        &  3.981E-05 &  9.552E-06 &  2.316E-06 &  5.465E-07 &  1.095E-07 \\
		
		&            &            &          Rate &    2.0593  &    2.0442  &    2.0831  &    2.3188  \\
		
		\hline
		&            &    $E_{1,\tau}$        &  3.189E-05 &  7.594E-06 &  1.835E-06 &  4.343E-07 &  8.803E-08 \\
		
		&            &            &          Rate &    2.0703  &    2.0487  &    2.0792  &    2.3028  \\
		
		&         SBD &   $E_{2,\tau}$         &  9.081E-05 &  2.158E-05 &  5.225E-06 &  1.255E-06 &  2.675E-07 \\
		
		(0.7,0.8)&            &            &          Rate &    2.0733  &    2.0461  &    2.0581  &    2.2296  \\

		\cline{2-8}
		&            &    $E_{1,\tau}$        &  3.193E-05 &  7.594E-06 &  1.836E-06 &  4.349E-07 &  8.947E-08 \\
		
		&            &            &          Rate &    2.0720  &    2.0487  &    2.0774  &    2.2812  \\
		
		&        FSBD &    $E_{2,\tau}$        &  9.089E-05 &  2.158E-05 &  5.228E-06 &  1.263E-06 &  2.863E-07 \\
		
		&            &            &          Rate &    2.0744  &    2.0454  &    2.0499  &    2.1406  \\
		
		\hline
	\end{tabular}
\end{table}

\begin{table}
\footnotesize\center
	\caption{$L_2$ error and convergence rates at $t=1$ with $h=1/1024$}
	\label{tab:O2nonsmooth}
	\begin{tabular}{c|c|c|ccccc}
		\hline
	$(\alpha_1,\alpha_2)$	&            &    $1/\tau$        &         20 &         40 &         80 &        160 &        320 \\
		\hline
		&            &  $E_{1,\tau}$         &  3.262E-06 &  7.860E-07 &  1.910E-07 &  4.511E-08 &  8.992E-09 \\
		
		&            &            &       Rate &    2.0534  &    2.0412  &    2.0817  &    2.3269  \\
		
		&        SBD &   $E_{2,\tau}$         &  5.178E-06 &  1.246E-06 &  3.024E-07 &  7.139E-08 &  1.422E-08 \\
		
			(0.2,0.4) &            &            &       Rate &    2.0555  &    2.0424  &    2.0827  &    2.3275  \\

		\cline{2-8}
		&            &    $E_{1,\tau}$        &  3.228E-06 &  7.859E-07 &  1.909E-07 &  4.507E-08 &  9.028E-09 \\
		
		&            &            &       Rate &    2.0382  &    2.0415  &    2.0825  &    2.3197  \\
		
		&        FSBD &     $E_{2,\tau}$       &  5.152E-06 &  1.246E-06 &  3.023E-07 &  7.134E-08 &  1.429E-08 \\
		
		&            &            &       Rate &    2.0482  &    2.0429  &    2.0832  &    2.3193  \\

		\hline
		&            &   $E_{1,\tau}$         &  1.141E-05 &  2.729E-06 &  6.607E-07 &  1.561E-07 &  3.136E-08 \\
		
		&            &            &       Rate &    2.0636  &    2.0461  &    2.0814  &    2.3155  \\
		
		&         SBD &  $E_{2,\tau}$          &  1.543E-05 &  3.669E-06 &  8.887E-07 &  2.137E-07 &  4.577E-08 \\
		
		(0.6,0.8)&            &            &       Rate &    2.0727  &    2.0455  &    2.0561  &    2.2231  \\
		
		\cline{2-8}
		&            &  $E_{1,\tau}$          &  1.142E-05 &  2.729E-06 &  6.606E-07 &  1.562E-07 &  3.160E-08 \\
		
		&            &            &       Rate &    2.0655  &    2.0464  &    2.0804  &    2.3053  \\
		
		&        FSBD &    $E_{2,\tau}$        &  1.545E-05 &  3.669E-06 &  8.892E-07 &  2.151E-07 &  4.918E-08 \\
		
		&            &           &       Rate &    2.0740  &    2.0448  &    2.0473  &    2.1290  \\

		\hline
	\end{tabular}
\end{table}

Finally, we use the SBD scheme and fast SBD scheme to solve the system \eqref{equrqtosol} with $a=-1$ to verify the efficiency of the latter scheme. Use the numerical solution with $\tau=1/4000$ and $h=1/1024$ as the `exact' solution. Table \ref{tab:O2time} shows the $L_2$ error and CPU time when $T=10$, $\tau=1/2000$ and $h=1/1024$ for different $\alpha_1$ and $\alpha_2$. The $L^2$ errors of SBD scheme are close to the errors of fast SBD scheme but the CPU times of  fast SBD scheme are much less than  SBD scheme for different $\alpha_1$ and $\alpha_2$, which show that the fast algorithm can greatly reduce computation time.

\begin{table}
\footnotesize\center
	\caption{$L^2$ error and CPU time at t=10}
	\label{tab:O2time}
	\begin{tabular}{c|c|ccc}
		\hline
		&          $(\alpha_1,\alpha_2)$  &  (0.3,0.8) &  (0.4,0.7) &  (0.5,0.6) \\
		\hline
		&    $E_{1,\tau}$        &  2.628E-10 &  4.019E-10 &  7.399E-10 \\
		
		SBD &  $E_{2,\tau}$          &  7.571E-09 &  3.471E-09 &  1.561E-09 \\
		
		& CPU time(s) &    870.13  &    883.70  &    882.48  \\
		\cline{2-5}
		&      $E_{1,\tau}$      &  3.543E-10 &  4.862E-10 &  1.008E-09 \\
		
		FSBD &     $E_{2,\tau}$       &  1.321E-08 &  5.697E-09 &  2.375E-09 \\
		
		& CPU time(s) &    209.44  &    210.83  &    211.36  \\
		\hline
	\end{tabular}
\end{table}

\section*{Conclusion}
The fast algorithms based on BE and SBD convolution quadratures are developed to solve the homogeneous fractional Fokker-Planck equations with two internal states, and they, respectively, have the first- and second-order convergence rates. One of the advantages of the provided fast algorithms is that the assumption of the regularity of the solution in time is not required. The effectiveness of the fast algorithms is verfied by numerical experiments.


\section*{Acknowledgements}
This work was supported by the National Natural Science Foundation of China under grant no. 11671182, and the Fundamental Research Funds for the Central Universities under grant no. lzujbky-2018-ot03.

\appendix
\section{Derivation of  \eqref{equO1toderinAp}}\label{AP1}
For $i\geq 2$, letting $\tau s=t$ in Eq. \eqref{equO1intrep}, we have
\begin{equation*}
	\begin{aligned}
		\int_0^{\infty}s^{\alpha-1} \frac{\tau s}{(\tau s+1)^{i+1}}ds=&\tau^{-\alpha}\int_0^{\infty}t^\alpha\left (\frac{1}{1+t}\right )^{i+1}dt\\
		=&\tau^{-\alpha}\int_1^{\infty}(t-1)^\alpha\left (\frac{1}{t}\right )^{i+1}dt.
	\end{aligned}
\end{equation*}
Then taking $\eta=1/t$ leads to
\begin{equation*}
	\begin{aligned}
		\tau^{-\alpha}\int_1^{\infty}(t-1)^\alpha\left (\frac{1}{t}\right )^{i+1}dt=&\tau^{-\alpha}\int_1^{\infty}\left(\frac{1}{\eta}-1\right) ^{\alpha} \eta^{i+1}d\frac{1}{\eta}\\
		=&\tau^{-\alpha}\int_0^{1}\left(\frac{1}{\eta}-1\right) ^{\alpha} \eta^{i-1}d\eta\\
		=&\tau^{-\alpha}\int_0^{1}\left(1-\eta\right) ^{\alpha} \eta^{-\alpha}\eta^{i-1}d\eta.\\
	\end{aligned}
\end{equation*}
Lastly, we take $\eta=(s+1)/2$ and get
\begin{equation*}
	\begin{aligned}
		\tau^{-\alpha}\int_0^{1}\left(1-\eta\right) ^{\alpha} \eta^{-\alpha}\eta^{i-1}d\eta=&\tau^{-\alpha}\int_0^{1}\left(1-\frac{s+1}{2}\right)^{\alpha} \left(\frac{s+1}{2}\right)^{i-1-\alpha}d\frac{s+1}{2}\\
		=&\tau^{-\alpha}2^{-2}\int_{-1}^{1}\left(1-s\right)^{\alpha}(1+s)^{1-\alpha} \left(\frac{s+1}{2}\right)^{i-2}ds.
	\end{aligned}
\end{equation*}

\section{Derivation of \eqref{equO2toderinAp}}\label{AP2}
For $i\geq 3$, taking $\tau s=t$ in \eqref{equO2intrep} results in
\begin{equation*}
	\begin{aligned}
		\int_0^{\infty}s^{\alpha-1}\frac{\tau s}{1+\sigma}\left(\frac{1}{3+\sigma}\right)^{i+1}ds=\tau^{-\alpha}\int_0^{\infty}t^{\alpha}\frac{1}{1+\sigma}\left(\frac{1}{3+\sigma}\right)^{i+1}dt
	\end{aligned}
\end{equation*}
 and $\sigma^2+2\sigma+2\tau s=\sigma^2+2\sigma+2t=0$, which lead to
\begin{equation*}
	\begin{aligned}
		\tau^{-\alpha}\int_0^{\infty}t^{\alpha}\frac{1}{1+\sigma}\left(\frac{1}{3+\sigma}\right)^{i+1}dt=&\tau^{-\alpha}\int_{0}^\infty\left(-\frac{\sigma^2+2\sigma}{2}\right)^{\alpha}\frac{1}{1+\sigma}\left(\frac{1}{3+\sigma}\right)^{i+1}d\left(-\frac{\sigma^2+2\sigma}{2}\right)\\
		=&-\tau^{-\alpha}\int_{\Gamma_1}\left(-\frac{\sigma^2+2\sigma}{2}\right)^{\alpha}\left(\frac{1}{3+\sigma}\right)^{i+1}d\sigma
	\end{aligned}	
\end{equation*}
with $\Gamma_1=\{z=-1+\sqrt{1-2t},~0\leq t\leq \infty\}$. It is easy to find that for any $n\geq 3$ and $-\pi<\theta_1,\theta_2<\pi$ when $0<\alpha<1$,
\begin{equation*}
	\begin{aligned}
		\lim_{r\rightarrow\infty}\left |\int_{\theta_1}^{\theta_2}\left(-\frac{r^2e^{2\mathbf{i}\theta}+2re^{\mathbf{i}\theta}}{2}\right)^{\alpha}\left(\frac{1}{3+re^{\mathbf{i}\theta}}\right)^{n+1}\mathbf{i}re^{\mathbf{i}\theta}d\theta\right |\leq \lim_{r\rightarrow\infty}Cr^{2\alpha-n}=0;
	\end{aligned}
\end{equation*}
and according to Jordan's Lemma, we have
\begin{equation*}
	\begin{aligned}
		-\tau^{-\alpha}\int_{\Gamma_1}\left(-\frac{\sigma^2+2\sigma}{2}\right)^{\alpha}\left(\frac{1}{3+\sigma}\right)^{i+1}d\sigma=&-\tau^{-\alpha}\int_{0}^{\infty}\left(-\frac{\sigma(\sigma+2)}{2}\right)^{\alpha}\left(\frac{1}{3+\sigma}\right)^{i+1}d\sigma\\
		=&-\tau^{-\alpha}\int_{2}^{\infty}\left(-\frac{\sigma(\sigma-2)}{2}\right)^{\alpha}\left(\frac{1}{1+\sigma}\right)^{i+1}d\sigma.
	\end{aligned}
\end{equation*}
Taking $\sigma=1/t$ in the above equation results in
\begin{equation*}
	\begin{aligned}
		-\tau^{-\alpha}\int_{2}^{\infty}\left(-\frac{\sigma(\sigma-2)}{2}\right)^{\alpha}\left(\frac{1}{1+\sigma}\right)^{i+1}d\sigma=&-\tau^{-\alpha}\int_{2}^{\infty}\left(-\frac{1/t(1/t-2)}{2}\right)^{\alpha}\left(\frac{t}{1+t}\right)^{i+1}d\frac{1}{t}\\
		=&-\tau^{-\alpha}(-2)^{-\alpha}\int_{0}^{\frac{1}{2}}\left(1-2t\right)^{\alpha}t^{-2\alpha-2}\left(\frac{t}{1+t}\right)^{i+1}dt.
	\end{aligned}
\end{equation*}
Further letting $t=(s+1)/4$ leads to
\begin{equation*}
	\begin{aligned}
		&-\tau^{-\alpha}(-2)^{-\alpha}\int_{0}^{\frac{1}{2}}\left(1-2t\right)^{\alpha}t^{-2\alpha-2}\left(\frac{t}{1+t}\right)^{i+1}dt\\=&-\tau^{-\alpha}(-2)^{-\alpha}\int_{0}^{\frac{1}{2}}\left(1-\frac{s+1}{2}\right)^{\alpha}\left(\frac{s+1}{4}\right)^{-2\alpha-2}\left(\frac{s+1}{s+5}\right)^{i+1}d\frac{s+1}{4}\\
		=&-\tau^{-\alpha}(-1)^{-\alpha}(2)^{2+2\alpha}\int_{-1}^{1}(1-s)^{\alpha}(1+s)^{2-2\alpha}\left(\frac{s+1}{s+5}\right)^{i-3}\left(\frac{1}{s+5}\right)^{4}ds.
	\end{aligned}
\end{equation*}
Similarly, for the second part of Eq. \eqref{equO2intrep}, we can rewrite it as
\begin{equation*}
	\begin{aligned}
		&-\int_0^{\infty}s^{\alpha-1}\frac{\tau s}{1+\sigma}\left(\frac{1}{1-\sigma}\right)^{i+1}ds=\\
		&-\tau^{-\alpha}(2)^{-\alpha-3}\int_{-1}^{1}(1-s)^{\alpha}(1+s)^{2-2\alpha}\left(\frac{s+1}{2}\right)^{i-3}\left(3s+1\right)^{\alpha}ds.
	\end{aligned}
\end{equation*}

\end{document}